\newcommand{\bea}{\begin{eqnarray}}
\newcommand{\eea}{\end{eqnarray}}
\newcommand{\bean}{\begin{eqnarray*}}
\newcommand{\eean}{\end{eqnarray*}}
\newcommand{\no}{\nonumber}
\newcommand{\zero}{\boldsymbol{0}}
\renewcommand{\d}{\boldsymbol{d}}
\newcommand{\e}{\boldsymbol{e}}
\newcommand{\f}{\boldsymbol{f}}
\newcommand{\g}{\boldsymbol{g}}
\newcommand{\h}{\boldsymbol{h}}
\newcommand{\n}{\boldsymbol{n}}
\newcommand{\p}{\boldsymbol{p}}
\newcommand{\q}{\boldsymbol{q}}
\renewcommand{\t}{\boldsymbol{t}}
\renewcommand{\u}{\boldsymbol{u}}
\newcommand{\w}{\boldsymbol{w}}
\newcommand{\x}{\boldsymbol{x}}
\newcommand{\y}{\boldsymbol{y}}
\newcommand{\E}{\boldsymbol{E}}
\newcommand{\F}{\boldsymbol{F}}
\newcommand{\G}{\boldsymbol{G}}
\renewcommand{\H}{\boldsymbol{H}}
\renewcommand{\P}{\boldsymbol{P}}
\newcommand{\W}{\boldsymbol{W}}
\newcommand{\Y}{\boldsymbol{Y}}
\newcommand{\bPsi}{\underline{\boldsymbol{\Psi}}}
\renewcommand{\i}{\mathrm{i}}
\newcommand{\CN}{\boldsymbol{\mathcal{N}}}
\newcommand{\CM}{\boldsymbol{\mathcal{M}}}
\newcommand{\bbC}{\mathbb{C}}
\newcommand{\bbN}{\mathbb{N}}
\newcommand{\bbR}{\mathbb{R}}
\newcommand{\nx}{\n(\x)}
\newcommand{\dgdnxpm}{\frac{ \partial G_\pm}{\partial \nx} }
\newtheorem{prop}{Proposition}[section]
\newcommand{\SA}{\underline{\boldsymbol{\mathscr{A}}}}
\newcommand{\SB}{\underline{\boldsymbol{\mathscr{B}}}}
\newcommand{\SC}{\underline{\boldsymbol{\mathscr{C}}}}
\newcommand{\SD}{\underline{\boldsymbol{\mathscr{D}}}}
\newcommand{\SE}{\underline{\boldsymbol{\mathscr{E}}}}
\newcommand{\SF}{{{\mathscr{F}}}}
\newcommand{\SG}{{{\mathscr{G}}}}
\newcommand{\SH}{{{\mathscr{H}}}}
\newcommand{\SK}{{{\mathscr{K}}}}
\newcommand{\SSS}{{{\mathscr{S}}}}
\newcommand{\CEE}{{\boldsymbol{\mathcal{E}}}}
\newcommand{\CHH}{{\boldsymbol{\mathcal{H}}}}
\newcommand{\half}{\frac{1}{2}}
\newcommand{\cross}{\times}
\newcommand{\tendsto}{\rightarrow}
\renewcommand{\to}{\rightarrow}
\newcommand{\curl}{\boldsymbol{\mathrm{curl}} \; }
\newcommand{\curlx}{\boldsymbol{\mathrm{curl}}_{\x} }
\newcommand{\gradx}{\boldsymbol{\mathrm{grad}}_{\x}  }
\newcommand{\real}{\mathrm{Re}}
\newcommand{\xh}{\widehat{\x}}
\newcommand{\yh}{\widehat{\y}}
\newcommand{\surface}{\partial D}
\newcommand{\sphere}{S}
\newcommand{\poly}{1.584}
\newcommand{\mishi}{1.5 + 0.02i}
\newcommand{\mishii}{1.0925 + 0.248i}
\newcommand{\errorsuffix}{}
\newcommand{\extfieldfig}[9]{\begin{figure}
    \centering
    \revisedB
    \fbox{%
    \begin{tabular}{cc}
        \includegraphics[width=7.5cm]{#5} & \includegraphics[width=7.5cm]{#6}\\
        \includegraphics[width=7.5cm]{#5\errorsuffix} & \includegraphics[width=7.5cm]{#6\errorsuffix}
    \end{tabular}}
    \caption{\label{#7}
      Visualization of the interior and exterior total electric fields
      $| \mathop{\mathrm{Re}} \E_{\tot,n}|$ 
      induced by a H-polarized plane wave
  striking a {#1} with refractive index $\nu = {#2}$
  and electromagnetic size $s = {#3}$ (top left) and $s={#4}$ (top right)
  computed with $n={#8}$ and $n={#9}$ respectively.
  The corresponding relative error
  $\log_{10}(\mathrm{ERR}_{\pm,n}(\x))$ 
  is visualized (bottom left and right).
    }
\end{figure}}
\newcommand{\farfieldfig}[7]{\begin{figure}
    \centering
    \begin{tabular}{cc}
        \includegraphics[width=6.5cm]{#4} & \includegraphics[width=6.5cm]{#5}
    \end{tabular}
    \caption{\label{#6}
      Visualization of the radar cross section (HH-polarization)
      computed with $n={#7}$
  of a {#1} with refractive index $\nu = {#2}$
  and electromagnetic size $s = {#3}$
  in original orientation (blue) and $\pi/2$-rotated orientation (red).
  Coincidence of the curves at $\theta = \pi/2$ is marked by a $\circ$.
  The right figure shows a close-up of the region indicated by the box
  in the left figure.
    }
\end{figure}}
\newcommand{\mishfig}[8]{\begin{figure}
    \centering
    \revisedC
    \fbox{%
    \begin{tabular}{cc}
      \includegraphics[width=6.5cm]{#5} & \includegraphics[width=6.5cm]{#6}
    \end{tabular}}
    \caption{\label{#7}
      Visualization of the radar cross section (HH-polarization)
      computed with $n={#8}$
      of a spheroid with aspect ratio #1, refractive index
      $\nu = {#2}$ (left) and $\nu={#3}$ (right),
  and electromagnetic size $s = {#4}$ computed using
  this algorithm (blue) and Mishchenko's code (red dashed).
    }
\end{figure}}
\newcommand{\tablecap}{}
\newenvironment{rptable}[3]{%
  % Note: define \tablecap to put caption in end of environment... need
  % to do it this way because environment parameters aren't allowed in
  % the end environment part.
  % Note: put some ~ around the polycarbonate parameter to make the table
  % columns roughly the same size
    \renewcommand{\tablecap}{\caption{\label{#3}
      Scattering by a {\bf {#1}} with diameter {#2}.}}
  \begin{table}
    \centering
\begin{tabular}{cccc}
%  \hline
%  \multicolumn{4}{c}{Scattering by a {#1} of diameter $\lambda$.}\\
  \hline
   & ~~~~$\nu = 1.584$~~~~ & $\nu = 1.5 + 0.02i$ & $\nu = 1.0925 + 0.248i$\\
  \cline{2-4}
  $n$ & $\mathrm{ERR}_{\mathrm{RP},n}$ & $\mathrm{ERR}_{\mathrm{RP},n}$ & $\mathrm{ERR}_{\mathrm{RP},n}$ \\
  \hline}
{%
\hline
\end{tabular}
\tablecap
  \end{table}
}
\newenvironment{mietable}[3][]{%
  % Note: define \tablecap to put caption in end of environment... need
  % to do it this way because environment parameters aren't allowed in
  % the end environment part.
  % Note: put some ~ around the polycarbonate parameter to make the table
  % columns roughly the same size
    \renewcommand{\tablecap}{\caption{\label{#3}
      Scattering by a {\bf sphere} with diameter {#2}.}}
    \begin{table} #1
    \centering
\begin{tabular}{cccc}
%  \hline
%  \multicolumn{4}{c}{Scattering by a {#1} of diameter $\lambda$.}\\
  \hline
   & ~~~~$\nu = 1.584$~~~~ & $\nu = 1.5 + 0.02i$ & $\nu = 1.0925 + 0.248i$\\
  \cline{2-4}
  $n$ & $\mathrm{ERR}_{\mathrm{Mie},n}$ & $\mathrm{ERR}_{\mathrm{Mie},n}$ & $\mathrm{ERR}_{\mathrm{Mie},n}$ \\
  \hline}
{%
\hline
\end{tabular}
\tablecap
  \end{table}
}
\newenvironment{eddytable}[3][]{%
  % Note: define \tablecap to put caption in end of environment... need
  % to do it this way because environment parameters aren't allowed in
  % the end environment part.
  % Note: put some ~ around the polycarbonate parameter to make the table
  % columns roughly the same size
    \renewcommand{\tablecap}{\caption{\label{#3}
      Scattering by a plasmonic {\bf sphere} with diameter {#2}.}}
    \begin{table} #1
    \centering
\begin{tabular}{cccc}
%  \hline
%  \multicolumn{4}{c}{Scattering by a {#1} of diameter $\lambda$.}\\
  \hline
   & ~~~~$\nu = 10^2(1+i)$~~~~ & $\nu = 10^3(1+i)$ & $\nu = 10^4(1+i)$\\
  \cline{2-4}
  $n$ & $\mathrm{ERR}_{\mathrm{Mie},n}$ & $\mathrm{ERR}_{\mathrm{Mie},n}$ & $\mathrm{ERR}_{\mathrm{Mie},n}$ \\
  \hline}
{%
\hline
\end{tabular}
\tablecap
  \end{table}
}
\newtheorem{theorem}{Theorem}
\newtheorem{lemma}[theorem]{Lemma}
\newtheorem{remark}[theorem]{Remark}
\newcommand{\gb}{\boldsymbol{\gamma}}
\newcommand{\gt}{\boldsymbol{\gamma}_{\boldsymbol{t}}}
\newcommand{\gn}{\gamma_{\boldsymbol{n}}}
\newcommand{\tot}{\mathrm{tot}}
\newcommand{\inc}{\mathrm{inc}}
\newcommand{\Mr}{\mathbb{M}}
\newcommand{\Mm}{\boldsymbol{\Mr}}
\newcommand{\Ir}{\mathbb{I}}
\newcommand{\Ima}{\boldsymbol{\Ir}}
\newcommand{\Jr}{\mathbb{J}}
\newcommand{\Jm}{\boldsymbol{\Jr}}
\newcommand{\Cs}{\mathrm{C}}
\newcommand{\Cv}{\boldsymbol{\Cs}}
\newcommand{\Hs}{\mathrm{H}}
\newcommand{\Hv}{\boldsymbol{\Hs}}
\newcommand{\Hss}{\Hs^s}
\newcommand{\Hvs}{\Hv^s}
\newcommand{\Ls}{\mathrm{L}}
\newcommand{\Lv}{\boldsymbol{\Ls}}
\definecolor{marker}{rgb}{1,1,0.5}
\definecolor{marker}{rgb}{1,1,0.5}
\definecolor{mydarkgreen}{rgb}{0,0.6,0}
\definecolor{paleyellow}{rgb}{1,1,0.9}
\newcommand{\revA}[1]{\textcolor{black}{#1}}
\newcommand{\revB}[1]{\textcolor{black}{#1}}
\newcommand{\revC}[1]{\textcolor{black}{#1}}
\newcommand{\us}[1]{\textcolor{black}{#1}}
\newcommand{\revisedB}{\color{black}}
\newcommand{\revisedC}{\color{black}}
\newcommand{\revisedus}{\color{black}}
\begin{document}

\title{
%\st{An all-frequency stable surface integral equation
%algorithm
 % for electromagnetism in 3-D unbounded penetrable media:
%Continuous and fully-discrete model analysis} \\
{\revB{ 
An  all-frequency stable  integral system
for Maxwell's equations in  3-D penetrable media:
continuous and discrete  model analysis}}}

%\title{A fully discrete, spectrally convergent, all-frequency  surface integral equation  
%for  dielectric media 
%}

\author*[1]{\fnm{Mahadevan} \sur{Ganesh}}\email{mganesh@mines.edu}

\author[2]{\fnm{Stuart C.} \sur{Hawkins}}\email{stuart.hawkins@mq.edu.au}

\author[3]{\fnm{Darko} \sur{Volkov}}\email{darko@wpi.edu}

  \affil*[1]{\orgdiv{Department of Applied Mathematics and Statistics},
    \orgname{Colorado School of Mines}, \orgaddress{\city{Golden}, \state{CO},
    \country{USA}}}
  
  \affil[2]{\orgdiv{Department of Mathematical and Physical Sciences},
    \orgname{Macquarie University}, \orgaddress{\city{Sydney}, \state{NSW} \postcode{2109},
    \country{Australia}}}

    \affil[3]{\orgdiv{Department of Mathematical Sciences},
    \orgname{Worcester Polytechnic Institute}, \orgaddress{\city{Worcester}, \state{MA},
    \country{USA}}}

    \abstract{		    
 {\revB{We introduce a new system of surface integral equations for Maxwell's transmission problem in three dimensions (3-D). This system has two remarkable features, both of which we prove. First, it is well-posed at all frequencies. Second, the underlying linear operator has a uniformly bounded inverse as the frequency approaches zero,  ensuring that there is no low-frequency breakdown.
The system is derived from a formulation we introduced in our previous work, which required additional integral constraints to ensure well-posedness across all frequencies. In this study, we eliminate those constraints and demonstrate that our new self-adjoint, constraints-free linear system—expressed in the desirable form of an identity plus a compact weakly-singular operator—is stable for all frequencies.
Furthermore, we propose and analyze a fully discrete numerical method for these systems and provide a proof of spectrally accurate convergence for the computational method. We also computationally demonstrate the high-order accuracy of the algorithm using  benchmark scatterers with curved surfaces.
		}}}

\keywords{Electromagnetic scattering,  dielectric, weakly singular integral equations, surface integral equations, stabilization, spectral numerical convergence}

\maketitle

%------------------------------------------------------------------------------
% Section
%------------------------------------------------------------------------------
\section{Introduction}\label{sec:int}

Computation of the radar cross section (RCS) through simulation of electromagnetic waves in unbounded dielectric 
or absorbing penetrable media has been, and remains, crucial for advances in several technological and atmospheric 
science innovations~\cite{cheney,hulst:light, kristensson,rother}.
The classical approach to modeling the electric and magnetic fields 
in the presence of a penetrable body in free space is based on the 
  Maxwell partial differential equations (PDEs), augmented with 
a radiation condition for the exterior fields,
and tangential continuity of the 
electric field and magnetizing field
%and normal continuity of the displacement field and the magnetic field 
across the  interface~\cite{costabel:2011_pap,colton:inverse, nedlec:book}.

{\revB{The Maxwell PDE system for the dielectric case has been proven to be uniquely solvable at all frequencies~\cite{muller1946, muller:book}. 
As noted in the introduction of \cite{buffa2000justification}, this PDE system remains uniformly well-posed as the frequency approaches zero. 
This uniform well-posedness holds for the dielectric case, regardless of the scatterer's genus. 
In contrast, for perfect conductors, additional conditions must be verified, as explained in \cite{buffa2000justification}. 
This observation is consistent with the findings of the present authors in \cite[Appendix B]{ganesh2014all}. }}

Our focus in this work is on time-harmonic
electromagnetic waves with positive frequency $\omega$, 
characterized   in a 
three dimensional  piecewise-constant medium
by the electric field $\CEE$ 
and magnetic field $\CHH$~\cite[Page~178]{nedlec:book} with
\begin{equation}
\label{eq:timewave}
\CEE(\x,t) =  \real \left\{\E(\x) e^{-i \omega t} \right\}, \quad 
\CHH(\x,t)  =    \real \left\{\H(\x) e^{-i \omega t} \right\}. %, \quad \x \notin \surface,
\end{equation}
\revB{We denote by $\surface$  the boundary of a closed bounded region $D \subset \bbR^3$,
and write $D^+ = \bbR^3 \setminus \overline{D}$ and $D^- = D$
for the exterior and
interior of the penetrable domain $D$, respectively. The regularity of the electromagnetic field on the surface depends
on the appropriate smoothness of the surface $\surface$. We quantity such smoothness requirements in the first theorem of this article.}

We denote the 
permittivity and permeability constants
in   $D^\pm$
by $\epsilon^\pm$ and $\mu^\pm$ respectively.
{\revA{For the mathematical and numerical analysis of our  new all-frequency stable formulation, we focus on non-plasmonic scatterers $D$. For such physical materials, following the literature~\cite{colton:inverse, nedlec:book}, we assume that $\mu^\pm$ and $\epsilon^+$ are positive real numbers, and non-zero $\epsilon^-$ may be either a positive real number or a complex number 
    with non-negative real and imaginary parts.  Although our analyses may not apply to all cases involving real negative permittivity
    (such as the plasmonic case), the formulation and  algorithm we present can still be utilized for a subset of plasmonic scenarios, which we will demonstrate in the last section of this article. In future work, we plan to develop mathematical and numerical analysis specifically for plasmonic materials.}}

It is convenient to then define
\begin{equation}
\label{eq:epsilon-mu}
\epsilon(\x) = \epsilon^\pm, \qquad \mu(\x) = \mu^\pm, \qquad \mbox{for $\x \in D^\pm$}.
\end{equation}
The refractive index of the medium is
\begin{displaymath}
  \nu = \left( \frac{\mu^- \epsilon^-}{\mu^+ \epsilon^+} \right)^{1/2},
\end{displaymath}
and the penetrable media is called dielectric when $\nu$ is real,
and absorbing otherwise (in which case it follows from the assumptions
above that the imaginary part of $\nu$
is positive).

Interaction of an incident electromagnetic field with the penetrable
body $D$ induces an interior field with spatial components $[\E^-,\H^-]$ in $D^-$ and
a scattered field with spatial components $[\E^+,\H^+]$ in $D^+$.
The spatial components $[\E,\H]$ of the induced field, with
\begin{equation}\label{eq:int_ext_rep}
\E(\x) = \E^\pm(\x), \qquad \H(\x) = \H^\pm(\x), \qquad \qquad  \x \in D^\pm,
\end{equation}
satisfy the time-harmonic Maxwell equations~\cite[Page~253]{nedlec:book}
\begin{equation}
\label{eq:reduced-maxwell}
\curl \E(\x) - \i \omega \mu(\x) \H(\x)  =  \zero,  \qquad 
\curl \H (\x) + \i\omega \epsilon(\x) \E(\x)  =  \zero,\qquad  \x \in  D^\pm,
\end{equation}
and the Silver-M\"uller radiation condition
\begin{equation}
\label{eq:silver-muller}
\lim_{|\x| \tendsto \infty} \left[\sqrt{\mu^+}  \H(\x) \cross \x -\sqrt{\epsilon^+} | \x | \E(\x) \right] = \zero.
\end{equation}
The exterior and interior  fields  are induced by an incident electromagnetic
field  $[\E_{\inc},\H_{\inc}]$ impinging on the penetrable body. 
We require  $[\E_{\inc},\H_{\inc}]$   to satisfy the homogeneous
Maxwell equations
\begin{equation}
\label{eq:incident-maxwell}
\curl \E_\inc(\x) - \i \omega \mu^+ \H_\inc(\x)  =  \zero,  \quad 
\curl \H_\inc (\x) + \i\omega \epsilon^+ \E_\inc(\x)  =  \zero,\quad  \x \in  \bbR^3 \setminus Q,
\end{equation}
where $Q \subset \bbR^3$ is a compact or empty set, bounded away from the 
penetrable body $D$. 
In applications the incident wave is typically a
plane wave, or a wave induced by a point source that satisfies~\eqref{eq:incident-maxwell}.
The total field is therefore $[\E_\tot,\H_\tot]$ with
\begin{equation}\label{eq:total_field}
\E_\tot(\x) = \E_\tot^\pm(\x), \qquad \H_\tot(\x) = \H_\tot^\pm(\x), \qquad \qquad  \x \in D^\pm,
\end{equation}
with
\begin{equation}
\E_\tot^- = \E^-, \quad \E_\tot^+ = \E^+ + \E_\inc, \quad
%\end{equation}
%\begin{equation}
\H_\tot^- = \H^-, \quad \H_\tot^+ = \H^+ + \H_\inc.
\end{equation}
We highlight that inside $D$ the total field is the induced field.

%Henceforth we will use 
%the notation $[\E^-,\H^-]$ for the interior 
%total field, but in a few places it will be 
%convenient to retain the notation $[\E^-_\tot,\H^-_\tot]$.

The tangential components of the total electric and magnetic fields 
are required to be  continuous across the interface $\surface$, leading
to the interface conditions~\cite[Equation~(5.6.66), Page~234]{nedlec:book}
\begin{equation}\label{eq:EH_interface}
 \E^-_\tot \times \n =     \E^+_\tot \times \n,  
\qquad \H^-_\tot \times \n  =    \H^+_\tot \times \n,
\qquad \text{on $\surface$}, 
%\n \times \E^-  &=&    \n \times (\E^+ + \E_\inc),  
%\qquad \n \times \H^-  =   \n \times (\H^+ + \H_\inc) 
%\qquad \text{on $\surface$}, 
 % \\ 
%\left[\n \times \H^-  \right ] &=&   \left[ \n \times (\H^+ + \H_\inc)  \right ],
%\qquad \text{on} \quad \surface,  
%\label{eq:H_interface} 
\end{equation}
where $\n$ is the outward unit normal to $\surface$. 
A consequence of~\eqref{eq:reduced-maxwell},~\eqref{eq:incident-maxwell}, 
and~\eqref{eq:EH_interface}
is that
the normal components of the fields (called the surface charges) also satisfy  interface conditions
\begin{equation}\label{eq:nor_interface}
\epsilon^- \n \cdot \E_\tot^- =  \epsilon^+ \n \cdot \E_\tot^+, \qquad
\mu^- \n \cdot \H_\tot^- =  \mu^+ \n \cdot \H_\tot^+, \qquad \text{on $\surface$}. 
\end{equation}
The scattered field $[\E^+,\H^+]$ has the asymptotic behavior of
an outgoing spherical wave~\cite[Theorem~6.8, Page~164]{colton:inverse},
and in particular,
\begin{equation}
\label{eq:farfield}
\E^+(\x) = \frac{e^{\i k_+ |\x|}}{|\x|} \left\{ \E^\infty(\xh) + O \left( \frac{1}{|\x|}\right)    \right\},
%\quad
%\H^+(\x) = \frac{e^{\i k_+ |\x|}}{|\x|} \left\{ \H^\infty(\xh) + O \left( \frac{1}{|\x|}\right)    \right\}
\end{equation}
as $|\x| \to \infty$ uniformly in all directions $\xh = \x/|\x|$,
where $k_+ = \omega \sqrt{\mu^+ \epsilon^+}$ is the exterior wavenumber.
The vector field $\E^\infty$ is known as the far field
patterns of $\E^+$.

One way to solve the Maxwell PDE system given by 
(\ref{eq:reduced-maxwell})--(\ref{eq:nor_interface}) is to formulate
equivalent surface integral equations (SIEs) on 
$\surface$.  {\revA{ A key reason for reformulating the Maxwell PDEs as an SIE system is to facilitate the solution of the {\it unbounded} region model (\ref{eq:reduced-maxwell})–(\ref{eq:nor_interface}), through an equivalent model defined only on the {\it bounded} surface  without truncation of the unbounded region and also  satisfy the radiation condition at infinity exactly.  In addition to the computational advantages of solving the 3-D Maxwell transmission problem
    using an SIE—where we only need to discretize for unknowns on the two-dimensional bounded surface—the surface integral operators in the SIE exhibit desirable boundedness and compactness properties compared to the unbounded differential operators in the PDEs. 
The foundational SIEs for non-spherical particles, known as the Electric Field Integral Equation (EFIE) and the Magnetic Field Integral Equation (MFIE), were proposed in Maue's work from 1949~\cite{Maue}. The issue of spurious resonances in these integral equations was addressed two decades later using a formulation known as the Combined Field Integral Equation (CFIE)~\cite{nedlec:book}. The low-frequency breakdown problem in such SIEs has only been tackled in this century. 
In particular, for the dielectric model, all-frequency stable continuous and discrete formulations—subject to additional constraints—were first developed in~\cite{eps_gre:2009_pap,eps_gre:2011_pap}, which required the inversion of the surface Laplace–Beltrami differential operator. Subsequently, the authors presented further developments in~\cite{ganesh2014all,dielectric2} with constraints on integral operators. However, those articles do not provide numerical analysis to mathematically quantify appropriate validity of the proposed discrete formulations.}}

In previous work~\cite{ganesh2014all} we discussed various classes
 of SIE for the Maxwell dielectric problem
 and
 introduced our own original formulation which was an integral equation
 combined with a stabilization constraint.
We showed that this formulation possesses
a few remarkable properties that, taken together,
are unique: first, this system is in the classical form of the
identity plus a compact operator. 
Second, it does not suffer from spurious eigenfrequencies. Third, 
it does not suffer from the low-frequency breakdown phenomenon: as the frequency tends to zero, this system remains uniformly well posed. 
Fourth, the solution to combined system is equal to the trace of the electric and magnetic fields
on the interface.

{\revB{All these four points were established
in~\cite{ganesh2014all}, subject to the non-plasmonic regime parameters assumptions described earlier. For the entire plasmonic regime, 
we are not aware of any SIE formulation in the literature  that has been proven or even numerically demonstrated to exhibit all  these four remarkable 
features. In the case of the plasmonic regime, as we numerically demonstrate in~Section~\ref{sec:numerics}, and as also shown in~\cite{helsing_plasmon} for several other SIE formulations,
it is plausible that  these four features may hold only for a subset of parameters within the plasmonic region. We plan to explore and quantify such a subset region through mathematical and numerical analysis
in  future work.}}

The main focus of this article, {\us{for the non-plasmonic region}},  is to implement the stabilization derived in~\cite{ganesh2014all}  in such a way
that a high-order numerical
method can be developed, and prove spectral convergence of the numerical
algorithm.
To this end, one of the main issues is how to incorporate the stabilization
into the integral equation formulation and retain an all-frequency stable
uniquely-solvable system.
One of the established approaches, as in the traditional {\us{CFIE
(combining the EFIE and MFIE)}} is to take a linear combination of the
integral equation and the stabilization constraint.
Such an approach has been used in the engineering literature~\cite{Yla_Tas:di_low,Tas_Van:pic_low}
without any proof that the uniquely solvable property is retained for
all frequencies.
We used such an approach for preliminary numerical investigations in~\cite{ganesh2014all}
and demonstrated that, for spherical scatterers,
there are some coupling parameters for which
the unique solvability cannot be guaranteed for all
frequencies.
One of the key contributions in this present work is to establish, using
robust mathematical proofs, that such a simple one-parameter coupling approach
will lead to a non-uniquely solvable system for infinitely many choices of
the parameters.
Accordingly, using such a simple approach does not guarantee, in general,
the
key properties of the all-frequency stable integral equation formulation,
subject to the constraint.

Subsequently, in this article we introduce a new continuous formulation that
incorporates the constraint efficiently into one equation, and we prove that
the established unique solution, with the constraint, is equivalent to
the solution obtained using our new coupled formulation.
The main focus of this paper is to apply a high-order numerical algorithm
for the new symmetric formulation, and prove spectrally accurate convergence under certain conditions that
are especially suited for curved interfaces. 
We substantiate our robust mathematical results with numerical experiments.
The underlying spectrally-accurate numerical framework was developed
for perfect conductors in~\cite{gh:first} and subsequently further developed
for the dielectric model in~\cite{dielectric2}, where we incorporated
the constraint using a complicated saddle-point formulation.
Numerical analysis of such a saddle-point approach is yet to be explored.
The present article is the first work in the literature  to develop  numerical analysis for
an all-frequency stable and uniquely solvable weakly-singular SIE system.

The rest of this article is as follows:
\begin{itemize}
\item[--]
In Section~\ref{sec:oper} we first review 
properties of surface integral operators that are relevant to this
study.
\item[--]
We then review an all-frequency stable surface
integral equation subject to  a stabilization constraint, and a main result from~\cite{ganesh2014all} in Section~\ref{sec:max_to_sie}.
\item[--]
In Appendix~\ref{Appendix1} we prove that the simple approach of combining
the integral equation with the stabilization constraint
using a single parameter does not, in general, guarantee unique solvability
of the resulting system, for infinitely many choices of the parameters.
\item[--]
Motivated by this, 
in Section~\ref{sec:J} we introduce a new equivalent formulation
incorporating the stabilization constraint in a self-adjoint form
and prove that the new single-equation system, without any constraint,
is uniquely solvable.
\item[--]
In Section~\ref{sec:spectral} we introduce a fully-discrete numerical
scheme for our new single-equation weakly-singular SIE system and prove that the associated
numerical solution converges super-algebraically.
\item[--]
In Section~\ref{sec:numerics} we implement the spectrally accurate
algorithm and demonstrate the superalgebraic convergence
for a class of convex and non-convex
test obstacles that are widely used in the literature.
\end{itemize}

\section{Surface integral operators and properties}\label{sec:oper}

In this section we introduce the  integral operators that 
subsequently appear in our SIE.  
%We also introduce several identities relating to these integral operators that
%we use later in the paper.
The starting point is the use of particular potentials
and their traces onto the interface $\surface$.
We use the same notation as in our previous article \cite{ganesh2014all}
and we 
first recall from \cite{ganesh2014all} some notation and definitions.
For related derivations and proofs, we refer to \cite{ganesh2014all}.

Let $\Omega \subseteq \bbR^3$ be either  $D^-$, $D^+$, 
$\surface$ or  $\bbR^3$. 
For a non-negative integer $m$  and $\sigma \in \bbR$ with $0 < \sigma \leq 1$,
let $\Cs^m(\overline{\Omega})$ 
denote the standard space of $m$-times continuously differentiable functions
and   $\Cs^{m,\sigma}(\overline{\Omega})$ denote the   H\"older space of all functions  in
$\Cs^m(\overline{\Omega})$ whose derivatives of order $m$ are uniformly H\"older continuous with
exponent $\sigma$. 
For $s\in \bbR$ and bounded $\Omega$ the space $\Hss(\Omega)$ is  the standard Sobolev space of
order $s$.

The  vector-valued counterparts of these 
spaces will be denoted by boldface symbols. Thus, for $m \in \bbN, \; s \in \bbR, 0\leq \sigma \leq 1$,
\begin{equation}
\Cv^{m,\sigma}(\overline{\Omega}) = \Big[ \Cs^{m,\sigma}(\overline{\Omega}) \Big]^3,
\quad \Hvs(\Omega) = \Big [ \Hss(\Omega) \Big]^3. 
%\Hvsloc(\overline \Omega) = \Big [ \Hssloc(\overline\Omega) \Big]^3.
\end{equation}

Let $\gb^\pm$ be the standard exterior/interior trace operator defined
for a continuous vector field $\u$  on $\overline{D^\pm}$ as
\begin{equation}\label{eq:lim_def}
%\gb^\pm \u = \u\rst{\surface},  \qquad 
\Big(\gb^\pm \u\Big)(\x) =  
\lim_{h \to 0^{\pm}} \u(\x+h\n(\x)), \qquad \x \in \surface.
\end{equation}
Let $\gt^\pm, \gn^\pm$ be respectively  the  (twisted) tangential  and  normal 
trace operators,  defined for a continuous vector field $\u$  on $\overline{D^\pm}$ as
\begin{equation}\label{eq:trace_def}
 \gt^\pm \u = (\gb^\pm\u) \times \n.  \qquad  \gn^\pm \u = (\gb^\pm \u) \cdot \n. 
\end{equation}
We denote by 
%$\Cs^{m,\sigma}(\overline{\surface}),  \Hss(\surface)  $,
%the spaces of scalar functions on $\surface$ of class $\Cs^{m,\sigma}$ and $\Hss$,
%and similarly, 
%$\Cv^{m,\sigma}(\surface),  \Hvs(\surface)$ the spaces of vector fields
% on $\surface$,
$\Cv_{\t}^{m,\sigma}(\surface)$ and $ \Hvs_{\t}(\surface)$ respectively
the spaces of tangential vector fields
 on $\surface$ with regularity $\Cv^{m,\sigma}$ and $\Hvs$.

Let
\begin{equation}\label{eq:fund}
G_\pm(\x,\y)  = \frac{1}{4 \pi} \frac{ e^{\i k_\pm |\x - \y|} }{|\x - \y|},
\end{equation}
where $k_\pm = \omega \sqrt{\epsilon^\pm \mu^\pm}$ denotes the exterior  and
interior wavenumber respectively. 
%, where 
%$$\sqrt{\epsilon^-} = \sqrt{|\epsilon^-|}\exp(i\frac{\mbox{Arg}(\epsilon^-)}{2}).$$
%Accordingly $\mbox{Im } k_- \geq 0$ while $\mbox{Im } k_- = 0$ if and only if
%$\mbox{Im } \epsilon^- = 0$.
Let $\Psi^V_\pm$ and $\bPsi^{\boldsymbol{V}}_\pm$ be respectively the scalar and vector single layer
potential operators, defined for scalar and vector surface densities $w$ and $\w$ as
\begin{equation}\label{eq:sl_pot}
(\Psi^V_\pm w)(\x)  =  \int_{\surface} G_\pm(\x,\y)  \; w(\y) \; ds(\y), \quad
(\bPsi^{\boldsymbol{V}}_\pm \w)(\x)  =  \int_{\surface} G_\pm(\x,\y)  \; \w(\y) \; ds(\y), 
\end{equation}
for $\x \in \bbR^3 \setminus \surface$.
We assume that the surface $\surface$ is at least of class $\Cs^{m^*, 1}$, for some integer $m^* \geq 1$.
Then the operators
$\Psi^V_\pm: \Cs(\surface) \rightarrow \Cs^{0,\alpha}(\bbR^3)$
and, for $0 < \alpha  < 1$, $\bPsi^V_\pm: \Cv(\surface) \rightarrow \Cv^{0,\alpha}(\bbR^3)$ 
are bounded linear operators~\cite[Theorems~3.3, 6.12]{colton:inverse}.
Hence
$\gt^\pm \circ \bPsi^{\boldsymbol{V}}_\pm :  \Cv(\surface) \rightarrow  
\Cv_{\t}^{0,\alpha}(\surface)$
is a bounded linear operator  and consequently   compact as an operator onto $\Cv(\surface)$
because of the compact imbedding~\cite[Theorem~3.2]{colton:inverse}. 
Similarly, using the properties of the  single layer potential on Sobolev 
spaces (see~\cite[Theorem 1(i)]{costabel:88_pap}), 
$\gt^\pm \circ \bPsi^{\boldsymbol{V}}_\pm$  is a compact operator onto  $\Hv^\half(\surface)$. 
Using the jump relation for the single layer 
potential~\cite{colton:inverse, nedlec:book}, we obtain
\begin{equation}\label{eq:lim0}
\gt^\pm \circ \bPsi^{\boldsymbol{V}}_\pm \w = \SA_\pm \w, \qquad \gn^\pm  \circ\bPsi^{\boldsymbol{V}}_\pm \w = \SF_\pm \w,
\end{equation}
where the tangential-field-valued and scalar-valued operators are defined as 
\begin{eqnarray}
(\SA_\pm \w)(\x)  & =  &  \int_{\surface} G_\pm(\x,\y)  \; \w(\y) \times \n(\x) \; ds(\y), \qquad \x \in \surface, \label{eq:A} \\
(\SF_\pm \w)(\x)  & = &   \int_{\surface} G_\pm(\x,\y)  \; \w(\y)\cdot \n(\x) \; ds(\y), \qquad \x \in \surface.
\label{eq:F} 
\end{eqnarray}
It is useful to consider the normal-field operator
\begin{equation}
(\n \SF_\pm \w)(\x)   =   \n(\x) (\SF_\pm \w)(\x), \qquad \x \in \surface \label{eq:nF}.
\end{equation}
The  surface integral operators 
$\SA_\pm$  and $\n \SF_\pm$  are compact as operators on  $\Cv(\surface)$ and $\Hv^\half(\surface)$. 
Further, for $\alpha$, $s$ and integer $m$ satisfying
\begin{equation}
  \label{eq:conditions}
  0 < \alpha < 1, \qquad 0 \leq m < m^*, \qquad 0 < s \leq m^*,
\end{equation}
we have that
\begin{equation}  \label{cont of A}
\SA_\pm,\n\SF_\pm :  \Cv^{m}(\surface) \rightarrow  \Cv^{m, \alpha}(\surface), \qquad  \qquad 
\SA_\pm,\n\SF_\pm :  \Hv^{s-1}(\surface) \rightarrow  \Hv^{s}(\surface),
\end{equation}
are bounded linear operators. The operators in \eqref{eq:A}--\eqref{eq:nF} enjoy
these  smoothing (and hence compactness) properties essentially because the kernels of these
surface integral operators are weakly-singular. 

We define vector-valued surface integral operators
\begin{eqnarray}
(\SB_\pm w)(\x) & = & \int_{\surface} \gradx G_\pm(\x,\y)  \times \n(\x) \; w(\y) \; ds(\y),  \label{eq:B}\\
(\SC_\pm \w)(\x) & = & 
- \n(\x) \times \curlx \int_{\partial D} G_\pm(\x,\y) \w(\y) \; ds(\y), \label{eq:C1}
\end{eqnarray}
for $\x \in \surface$, and related scalar-valued surface integral operators 
\begin{eqnarray}
(\SG_\pm w)(\x) & = & \int_{\surface} \dgdnxpm(\x,\y)  \; w(\y) \; ds(\y), \label{eq:G} \\
(\SH_\pm \w)(\x) & = & \int_{\surface}  \Big( \gradx G_\pm(\x,\y) \times \n(\x) \Big) \cdot \w(\y) \; ds(\y).
\label{eq:H}
\end{eqnarray}
We note that for a tangential density $\w$, 
\begin{equation}
(\SC_\pm \w)(\x)     =  (\SD_\pm \w)(\x) - (\SE_\pm \w)(\x),   \label{eq:C} 
\end{equation}
where for $\x \in \surface$, 

\vspace{-0.3in}
\begin{eqnarray}
(\SD_\pm \w)(\x) & = & \int_{\surface} \dgdnxpm(\x,\y)  \; \w(\y) \; ds(\y),  \label{eq:D} \\
(\SE_\pm \w)(\x) & = &  \int_{\surface} \Big( \gradx G_\pm(\x,\y) \Big) \; \w(\y) \cdot \Big( \n(\x) - \n(\y) \Big) \; ds(\y).   \label{eq:E}
\end{eqnarray}
It is also useful to consider the normal field surface integral operators
\begin{equation}
\label{eq:gh}
(\n \SG_\pm w)(\x) = \n(\x) (\SG_\pm w)(\x), \qquad  (\n \SH_\pm \w)(\x) = \n(\x) (\SH_\pm \w)(\x),
\qquad \x \in \surface.
\end{equation}

Clearly, the  kernels of the surface integral operators $\SD_\pm$ and $\SG_\pm$ are weakly-singular.
Hence $\SG_\pm$ is a compact  operator on $\Cs(\surface)$
and $\Hs^\half(\surface)$. 

Since $\surface$ is of at least $\Cs^{1, 1}$-class, we obtain
\begin{equation}
| \n(\x) - \n(\y) | \leq  C | \x - \y |, \qquad \x, \y \in \surface,
\end{equation}
for some constant $C$ that depends on the geometry of $\surface$.
Thus, because of the term $\n(\x) - \n(\y)$ in \eqref{eq:E}, 
 the operator  $\SE_\pm$ is weakly-singular. 
{\revA{There is a vast amount of literature
\cite{colton2013integral, maz1991boundary, nedelec2001acoustic}
explaining  that this implies that
$\SC_\pm$  is a compact  operator on $\Cv_{\t}(\surface)$
and on $\Hv^\half_{\t}(\surface)$.}}
In addition,
if~\eqref{eq:conditions} holds then
\begin{eqnarray*}
& & \SC_\pm:  \Cv^{m}_{\t}(\surface) \rightarrow  \Cv^{m, \alpha}_{\t}(\surface), \qquad 
\SC_\pm:  \Hv^{s-1}_{\t}(\surface) \rightarrow  \Hv^{s}_{\t}(\surface), \nonumber \\
& & \SG_\pm :  \Cs^{m}(\surface) \rightarrow  \Cs^{m, \alpha}(\surface), \qquad 
\SG_\pm :  \Hs^{s-1}(\surface) \rightarrow  \Hs^{s}(\surface),
\end{eqnarray*}
are bounded linear operators. 
The kernels of the surface integral operators $\SB_\pm,  \SH_\pm$  in
\eqref{eq:B}, \eqref{eq:H} have order $1/r^2$ singularity. 
However, our  reformulation of the time-harmonic dielectric model 
requires only operators $\SB_+ - \SB_-$
and $\SH_+ - \SH_-$.  It follows from~\cite[Remark 3.1.3]{sau_sch:book} % it is easy to show
that $\SB_+ - \SB_-$ and  $\SH_+ - \SH_-$ have weakly-singular kernels.
Thus the operator  $\n\SH_+  -  \n\SH_-$ is
compact on $\Cv(\surface)$
and $\Hv^\half(\surface)$ whilst
the operator  $\SB_+  - \SB_-$ is
compact on  $\Cs(\surface)$
and $\Hs^\half(\surface)$, and
if~\eqref{eq:conditions} holds then  the following linear operators are bounded:
\begin{eqnarray*}
& & \n\SH_+  -  \n\SH_- : 
\Cv^{m}(\surface) \rightarrow  \Cv^{m, \alpha}(\surface), \qquad
\n\SH_+  -  \n\SH_- :  \Hv^{s-1}(\surface) \rightarrow  \Hv^{s}(\surface), \\
& & \SB_+  - \SB_- :  \Cs^{m}(\surface) \rightarrow  \Cv^{m, \alpha}(\surface), \qquad \qquad
 \SB_+  - \SB_- :  \Hs^{s-1}(\surface) \rightarrow  \Hv^{s}(\surface). 
\end{eqnarray*}
%are bounded linear operators. 

Furthermore, we define two further scalar integral operators that we use in our additional
scalar integral equations for indirectly imposing the identities connecting 
currents and charges,
\begin{eqnarray}
(\SSS_\pm w)(\x) & = & \int_{\surface} G_\pm(\x,\y)  \; w(\y) \; ds(\y),  \quad \x \in  \surface, \label{eq:S} \\
(\SK_\pm \w)(\x) & = & \int_{\surface}  \Big( \gradx G_\pm(\x,\y) \times \n(\y) \Big) \cdot \w(\y) \; ds(\y), 
\quad \x \in \surface.
\label{eq:K}
\end{eqnarray}
The operator  $\SSS_+-  \SSS_- $ is
compact on $\Cs(\surface)$
and $\Hs^\half(\surface)$ whilst
the operator  $\SK_+  - \SK_-$ is
compact on  $\Cv(\surface)$
and $\Hv^\half(\surface)$, and
if~\eqref{eq:conditions} holds then
the following linear operators are bounded:
\begin{eqnarray*}
& & \SSS_+  -  \SSS_- : 
\Cs^{m}(\surface) \rightarrow  \Cs^{m, \alpha}(\surface), \qquad
\SSS_+  -  \SSS_- :  \Hs^{s-1}(\surface) \rightarrow  \Hs^{s}(\surface), \\
& & \SK_+  - \SK_- :  \Cv^{m}(\surface) \rightarrow  \Cs^{m, \alpha}(\surface),
\quad
 \SK_+  - \SK_- :  \Hv^{s-1}(\surface) \rightarrow  \Hs^{s}(\surface).
\end{eqnarray*}
Finally, we introduce
the following weakly-singular operators to
facilitate recalling in the next section our first second-kind surface-integral system that is all-frequency-stable
subject to a stability constraint.  
\begin{eqnarray}\label{eq:sub_operator}
\widetilde{\SC}^{(\alpha^+,\alpha^-)}  &=&  \alpha^+ \SC_+ - \alpha^- \SC_-, \qquad
\widetilde{\SA}^{(\alpha^+,\alpha^-,\beta^+,\beta^-,\lambda)} =
\i \lambda (\beta^+ \alpha^+ \SA_+ - \beta^- \alpha^- \SA_-), 
 \nonumber \\
\widetilde{\SG}^{(\alpha^+,\alpha^-)} &=& \SG_+ - \frac{\alpha^+}{\alpha^-} \SG_-, \qquad
\widetilde{\SF}^{(\alpha^+,\alpha^-,\lambda)} = \i \lambda \left(\alpha^+ \SF_+ - \alpha^- \SF_-\right),
  \nonumber \\
 \widetilde{\SB} = \SB_+ &- &  \SB_-,
\qquad \widetilde{\SH} = \SH_+ -\SH_-, 
\quad 
  \widetilde{\SSS} =  (\SSS_+ - \SSS_-), \quad
   \widetilde{\SK}  =  (\SK_+ - \SK_-).
\end{eqnarray}
In this notation, when it is used subsequently, 
 the parameter $\lambda$ will be replaced by $\pm \omega$ and hence 
 $\widetilde{\SA}^{(\alpha^+,\alpha^-,\beta^+,\beta^-,\pm \omega)}$ and $\widetilde{\SF}^{(\alpha^+,\alpha^-,\pm \omega)}$
 tend to zero operators as $\omega \rightarrow 0$. 
 %The scalar-valued acoustic operators $\widetilde{\SSS}, \widetilde{\SK}$ 
 %will play a crucial in stabilizing the SIE system. 

\section{A stability constrained well-posed SIE system}\label{sec:max_to_sie}

In this section we recall from~\cite{ganesh2014all} a matrix-operator 
 and  a stability constraint, and hence, 
subject to the constraint, an all-frequency-stable weakly-singular SIE system that is equivalent to the Maxwell PDE.
To this  end,
let $\Mm$ be the $2 \times 2$  weakly-singular  operator 
\begin{equation}\label{eq:Mm}
\Mm = \left [ \begin{array}{ll} \CM^{(\epsilon^+,\epsilon^-)}  & \CN^{(\epsilon^+,\epsilon^-,\mu^+,\mu^-,\omega)}   \\ \\
                                   \CN^{(\mu^+,\mu^-,\epsilon^+,\epsilon^-,-\omega)}    &  \CM^{(\mu^+,\mu^-)} 
                     \end{array} \right ], 
\end{equation}
with
\begin{eqnarray}\label{eq:full_Mop}
\CM^{(\alpha^+,\alpha^-)} \w  &=&  \frac{2}{\alpha^+ + \alpha^-}\Bigg \{ \n \times
\left[\widetilde\SC^{(\alpha^+,\alpha^-)}(\w \times \n)+ \alpha^+
%\color{pink}\widetilde\SB^{(\alpha^+,\alpha^-)} 
{\us{\widetilde\SB}}
(\w \cdot \n)\right] \nonumber \\
 & & \hspace{0.9in} - \alpha^- \n \left [\widetilde\SH (\w \times \n) - \widetilde\SG^{(\alpha^+,\alpha^-)} (\w \cdot \n) \right ] \Bigg \},
\end{eqnarray}
\begin{equation}\label{eq:full_Nop}
\CN^{(\alpha^+,\alpha^-,\beta^+,\beta^-,\lambda)} \w =  
\frac{2}{\alpha^+ + \alpha^-}\left \{ \left [\n \times 
\widetilde\SA^{(\alpha^+,\alpha^-,\beta^+,\beta^-,\lambda)} 
+ \alpha^- \n \widetilde\SF^{(\beta_+,\beta_-,\lambda)}\right] (\w \times \n) \right\}.
\end{equation}
It is easy to see the important property of $\Mm$ that in the limit $\omega \rightarrow 0$, the off-diagonal entries of $\Mm$ are zero, and 
hence the resulting diagonal operator facilitates decoupling of the electric and magnetic fields as $\omega \rightarrow 0$.
We further  observe that, because of the weak singularity of their kernels,  $\CM^{(\alpha_+,\alpha_-)}$ 
and $\CN^{(\alpha_+,\alpha_-,\beta_+,\beta_-,\lambda)}$ defined in~\eqref{eq:full_Mop}--\eqref{eq:full_Nop}
are compact as operators on  $X$,$\Cv(\surface)$ and on $\Hv^\half(\surface)$.
Furthermore,  for $\alpha$, $m$ and $s$ satisfying~\eqref{eq:conditions},
\begin{eqnarray*}
\CM^{(\alpha_+,\alpha_-)}, \CN^{(\alpha_+,\alpha_-,\beta_+,\beta_-,\lambda)}   &:&  \Cv^{m}(\surface) \rightarrow  \Cv^{m, \alpha}(\surface), \\
\CM^{(\alpha_+,\alpha_-)} , \CN^{(\alpha_+,\alpha_-,\beta_+,\beta_-,\lambda)} &:&  \Hv^{s-1}(\surface) \rightarrow  \Hv^{s}(\surface),
\end{eqnarray*}
are bounded linear operators. Consequently the $2 \times 2$ {\us{matrix-operator}} $\Mm$
defined in~\eqref{eq:Mm}
is a compact operator on $\Cv(\surface) \times \Cv(\surface)$ and on
$\Hv^\half(\surface) \times \Hv^\half(\surface)$. In addition,
if~\eqref{eq:conditions} holds then 
\begin{eqnarray}
\Mm  &:&  \Cv^{m}(\surface) \times  \Cv^{m}(\surface) \rightarrow  
\Cv^{m, \alpha}(\surface) \times \Cv^{m, \alpha}(\surface), \label{eq:Mm_c_prop} \\
\Mm &:&  \Hv^{s-1}(\surface) \times  \Hv^{s-1}(\surface) \rightarrow  
\Hv^{s}(\surface) \times \Hv^{s}(\surface), \label{eq:Mm_h_prop}
\end{eqnarray}
are bounded linear operators.
For $\x \in \surface$ define,
\begin{eqnarray}
\e^\i(\x)  &=& \frac{2\epsilon^+}{\epsilon^+ + \epsilon^-} \n(\x) \times \left(\gt^+ \E_\i\right)(\x) + 
\frac{2 \epsilon^-}{\epsilon^+ + \epsilon^-} \n(\x) \left(\gn^+ \E_\i\right)(\x),  \label{eq:einp}  \\
\h^\i(\x)  &=& \frac{2\mu^+}{\mu^+ + \mu^-} \n(\x) \times \left(\gt^+ \H_\i\right)(\x) + 
\frac{2 \mu^-}{\mu^+ + \mu^-} \n(\x) \left(\gn^+ \H_\i\right)(\x),  \label{eq:hinp} 
\end{eqnarray}
where $[\E_\i, \H_\i]$ is an incident electromagnetic wave.
In \cite{ganesh2014all} we proved that 
{\us{$(\f,\g)^T = (\e^\i, \h^\i)^T$}}
and  $(\e,\h)^T$ is  the exterior
trace of the total field 
solving the Maxwell dielectric problem
(\ref{eq:reduced-maxwell})--(\ref{eq:nor_interface}) for the same incident field $[\E_\i, \H_\i]$,
then 
\begin{equation} \label{eq:strawman}
\Big [ \Ima + \Mm  \Big ] \left (\begin{array}{l} \e \\ \h \end{array} \right ) =  \left (\begin{array}{l} \f \\ \g \end{array} \right ), 
\end{equation}
%which we solve for $[\e,\h]^T$ given boundary data $[\f,\g]^T$.
where $\Ima$ is the block identity operator.
This holds for all frequencies $\omega >0$.
{\it However, the system (\ref{eq:strawman}) 
may become singular for some frequencies $\omega$.}
In \cite{ganesh2014all} we found a constraint 
such that equation~\eqref{eq:strawman} augmented by that constraint is uniquely solvable 
for all frequencies. The stability constraint, which actually uses just
scalar operators that are also used in acoustics,
is governed by the operator $\Jm$ defined (with the same domain and range spaces
as $\Mm$) as: 
\begin{equation}\label{J-op}
\Jm \left (\begin{array}{l} \widetilde \e \\ \widetilde \h \end{array} \right ) =  \left  (0 ,~  0 ,~ \widetilde{\SK}\widetilde \e
       + \i \omega \mu^+  \widetilde{\SSS}  (\widetilde \h \cdot \n),~ 0  ,~ 0, ~  \widetilde{\SK} \widetilde \h
       - \i \omega \epsilon^+  \widetilde{\SSS}  (\widetilde \e \cdot \n)  \right )^T.
\end{equation}
%\begin{equation}
%\Jm \left (\begin{array}{l} \widetilde \e \\ \widetilde \h \end{array} \right ) =  \left (\begin{array}{lllll} 0 \\ 0 \\ \widetilde{\SK}\widetilde \e
 %      + \i \omega \mu^+  \widetilde{\SSS}  (\widetilde \h \cdot \n) \\ 0 \\ 0 \\  \widetilde{\SK} \widetilde \h
 %      - \i \omega \epsilon^+  \widetilde{\SSS}  (\widetilde \e \cdot \n) \end{array} \right ).
%\end{equation}
For $\alpha$, $m$ and $s$ satisfying~\eqref{eq:conditions},
\begin{eqnarray}
\Jm  &:&  \Cv^{m}(\surface) \times  \Cv^{m}(\surface) \rightarrow  
\Cv^{m, \alpha}(\surface) \times \Cv^{m, \alpha}(\surface), \label{eq:Jm_c_prop} \\
\Jm &:&  \Hv^{s-1}(\surface) \times  \Hv^{s-1}(\surface) \rightarrow  
\Hv^{\textsc{}s}(\surface) \times \Hv^{s}(\surface), \label{eq:Jm_h_prop}
\end{eqnarray}
are bounded linear operators. Similar to $\Mm$, as $\omega \rightarrow 0$ the electric and magnetic fields
are decoupled in the range of the stabilization operator $\Jm$. 
We now  state the main result on the
well-posedness of the stabilized SIE model. 

\begin{theorem}
\label{theorem:well-posedness}
Let 
$[\f, \g]^T $ be the trace on $\surface$ 
of some incident field $[\E_\inc, \H_\inc] $.
{\revB{Assume that $\epsilon^\pm$ and $\mu^\pm$ are as 
specified in Section \ref{sec:int}.}}
Let $[\E, \H]$ be the unique solution of the associated penetrable scattering model 
\eqref{eq:reduced-maxwell}--\eqref{eq:EH_interface}.
Then  $[\e,\h]=[\gb^+ \E_\tot^+ ,\gb^+ \H_\tot^+]$ 
is the {\bf unique} solution    of the  weakly-singular SIE
\begin{equation}\label{eq:SIE_con}
\Big [ \Ima + \Mm  \Big ] \left (\begin{array}{l} \e \\ \h \end{array} \right ) =  \left (\begin{array}{l} \f \\ \g \end{array} \right ), 
\mbox{ subject to  } \quad \Jm \left (\begin{array}{l} \e \\ \h \end{array} \right ) =  \left (\begin{array}{l} \zero \\ \zero \end{array} \right ).
\end{equation}
%~\eqref{eq:SIE_sys}.
%Further, if  $[\f, \g] \in \Y^{2,R}$ then  $[\e,\h]\in \Y^{2,D}$.  
In addition, if $\surface$ is of class $C^{m,1}$ with $m\geq 1$, then
$(\e,\h)^T \in \Cv^{m-1,1}(\surface) \times \Cv^{m-1,1}(\surface)$
 and depends continuously
 on
$(\f,\g)^T$.
Also $(\e,\h)^T \in \Hv^{m}(\surface) \times \Hv^{m}(\surface)$ and depends continuously
on $(\f,\g)^T$.
\end{theorem}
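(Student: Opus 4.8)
The statement splits into two essentially independent parts: (i) the constrained weakly-singular system~\eqref{eq:SIE_con} is uniquely solvable, with solution the exterior Cauchy data $[\e,\h]=[\gb^+\E_\tot^+,\gb^+\H_\tot^+]$; and (ii) the regularity of that solution and its continuous dependence on the data. The plan is to obtain (i) essentially verbatim from~\cite{ganesh2014all}, and to derive (ii) by a bootstrap argument built on the smoothing mapping properties of $\Mm$ (and $\Jm$) recorded in Section~\ref{sec:oper}.

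For (i) I would follow~\cite{ganesh2014all} in three steps. First, under the stated sign hypotheses on $\epsilon^\pm,\mu^\pm,\nu$, the penetrable Maxwell problem~\eqref{eq:reduced-maxwell}--\eqref{eq:nor_interface} has a unique solution $[\E,\H]$ by~\cite[Theorem~2.1]{costabel:2011_pap}; its exterior Cauchy data therefore furnishes a candidate $(\e,\h)$, which automatically lies in $\Hv^{1/2}(\surface)^2$ and depends continuously on $(\f,\g)$ in that space. Second, one checks that this $(\e,\h)$ satisfies both relations in~\eqref{eq:SIE_con}: the identity $[\Ima+\Mm](\e,\h)^T=(\f,\g)^T$ follows by inserting the single-layer-type (Stratton--Chu) representation of $[\E^\pm,\H^\pm]$ into the trace formulae~\eqref{eq:lim0} and using the interface conditions~\eqref{eq:EH_interface}--\eqref{eq:nor_interface}, while the two scalar equations encoded in $\Jm$ are precisely the current--charge identities forced by~\eqref{eq:reduced-maxwell}. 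This gives existence. Third, and conversely, given any $(\e,\h)$ solving the \emph{homogeneous} constrained system, substitute it into the same potential representations to produce fields solving~\eqref{eq:reduced-maxwell}--\eqref{eq:silver-muller} with zero data; here the constraint $\Jm(\e,\h)=\zero$ is exactly what makes the Maxwell divergence relations hold and what removes the static interior/exterior null fields that render the plain system~\eqref{eq:strawman} singular at certain $\omega$. Uniqueness for Maxwell then forces $(\e,\h)=\zero$, giving uniqueness of~\eqref{eq:SIE_con}.

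For (ii) I would bootstrap. Once a solution is in hand the constraint is no longer needed, so rewrite~\eqref{eq:SIE_con} as $(\e,\h)^T=(\f,\g)^T-\Mm(\e,\h)^T$. Since the incident field satisfies Maxwell in a neighbourhood of $\surface$ it is real-analytic there, so on a surface of class $C^{m,1}$ its trace obeys $(\f,\g)\in\Cv^{m,1}(\surface)^2\subset\Cv^{m-1,1}(\surface)^2\cap\Hv^{m}(\surface)^2$ (and the weighted combinations~\eqref{eq:einp}--\eqref{eq:hinp}, which involve the normal $\n\in\Cv^{m-1,1}(\surface)$, lie in the same spaces). Taking $m^*=m$ in~\eqref{eq:conditions}, the Sobolev mapping property~\eqref{eq:Mm_h_prop} gives $\Mm:\Hv^{s-1}(\surface)^2\to\Hv^{s}(\surface)^2$ for $0<s\le m$; starting from $(\e,\h)\in\Hv^{1/2}(\surface)^2$ and iterating, $\Mm(\e,\h)$ gains one Sobolev order per step until it is capped at $\Hv^{m}(\surface)^2$, whence $(\e,\h)=(\f,\g)-\Mm(\e,\h)\in\Hv^{m}(\surface)^2$. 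The Hölder statement is obtained by the parallel iteration on the Hölder scale, using~\eqref{cont of A} together with the standard Hölder-to-Hölder refinement ($\Cv^{k,\sigma}\to\Cv^{k+1,\sigma}$ for $0\le k<m^*$) for weakly-singular operators: from $(\e,\h)\in\Cv^{0}(\surface)^2$ one climbs $\Cv^{0,\alpha}\to\Cv^{1,\alpha}\to\cdots\to\Cv^{m-1,\alpha}$, and the endpoint exponent in $\Cv^{m-1,1}(\surface)^2$ is recovered as in~\cite{ganesh2014all}. Continuous dependence then follows from the same scheme: the solution map $(\f,\g)\mapsto(\e,\h)$ is linear and well defined by (i) and bounded on $\Hv^{1/2}(\surface)^2$; each bootstrap estimate $\|(\e,\h)\|_{j+1}\le\|(\f,\g)\|_{j+1}+\|\Mm\|_{j\to j+1}\|(\e,\h)\|_{j}$ telescopes to $\|(\e,\h)\|_{\Hv^{m}(\surface)^2}\le C\,\|(\f,\g)\|_{\Hv^{m}(\surface)^2}$, and likewise in $\Cv^{m-1,1}(\surface)^2$.

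The genuinely delicate point is step three of (i) — that the homogeneous constrained system has only the trivial solution — which is exactly where the stabilization constraint is indispensable and which is the heart of~\cite{ganesh2014all}; in the present article it is imported directly, so here the actual work is the bootstrap of (ii). Within that, the only subtle issue is reaching the \emph{endpoint} Hölder exponent $1$ (rather than a generic $\alpha<1$) in $\Cv^{m-1,1}(\surface)$, which has to be tracked carefully through the Hölder mapping properties of the weakly-singular kernels and ultimately exploits the Lipschitz factor $\n(\x)-\n(\y)$ appearing in operators such as~\eqref{eq:E} together with the $C^{m,1}$-regularity of $\surface$.
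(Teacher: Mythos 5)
Your proposal is correct and takes essentially the same route as the paper, whose entire proof of this theorem is the citation ``See \cite{ganesh2014all}'': you rightly import the crucial fact that the homogeneous constrained system has only the trivial solution from that reference, and your surrounding reconstruction (existence via the potential/Stratton--Chu representation and interface conditions, uniqueness via Maxwell uniqueness, regularity and continuous dependence by bootstrapping the smoothing properties of $\Mm$) is the argument given there. The only delicate point, reaching the endpoint exponent in $\Cv^{m-1,1}(\surface)$, you already flag and defer to \cite{ganesh2014all}, which is exactly what the paper does.
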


\begin{proof}
See \cite{ganesh2014all}.
\end{proof}
\vskip 10pt

{\section{A self-adjoint  SIE system without constraints{\us{: well-posedness and low
frequency stability}}}
%Imposing the constraint defined by $\Jm $}\vskip 10pt
\label{sec:J}

If the operator $\Ima + \Mm $  is singular,
then instead of solving equation~(\ref{eq:strawman}),
we could instead 
substitute $\Ima + \Mm + \xi \Jm$ for $\Ima + \Mm $, for a fixed complex number $\xi$ that 
would produce a regular operator 
{\revB{(that is, a continuous operator with continuous inverse)}}.
This is reasonable because
for right hand sides of interest,
derived from incident waves, we desire solutions
$[\e,\h]$ that satisfy the constraint
$\Jm \, (\e,\h)^T =  (\zero,\zero)^T$.
Unfortunately, however, we do not know a-priori
what value to choose for $\xi$. Once $\xi$ is fixed, if $\omega$ is changed
then the resulting operator
$\Ima + \Mm + \xi \Jm$ may become singular.
This was demonstrated numerically in 
\cite{ganesh2014all} for a spherical scatterer and,
in fact, we show in Appendix~\ref{Appendix1} that
%adding  $\xi \Jm$,   to  $\Ima + \Mm  $
this approach may fail for \textsl{any choice} 
of the complex scalar $\xi$.

More generally, solving linear problems with constraints has been an area of interest
for a long time and is surveyed in~\cite{benzi2005numerical}.
However, none of the cases covered in that survey paper applies to our particular case.
This is chiefly due to the fact that $\Mm$ is not symmetric, and that it is important for us not 
to lose the ``identity plus compact'' form of our operator.

%\subsection{Defining the adjoints of $\Mm$ and $\Jm$}

Each of the operators introduced in Section~\ref{sec:oper}
has a formal adjoint, and
it is easy to verify that 
the adjoint operators $\Mm^*$ and $\Jm^*$ satisfy
the same continuity properties 
as $\Mm$ and $\Jm$.
To describe the continuity properties of $\Mm$, $\Jm$
  and their adjoints, it is
  helpful to introduce
the following notations:
the space $\Cv_2(\surface)$ will be defined to be the product space
$\Cv(\surface) \times\Cv(\surface)$.
$\Lv^2_2(\surface)$, $\Hv^s_2(\surface)$ and $\Cv^{m,\sigma}_2(\surface)$ are defined likewise.

{\revB{Just like $\Mm$ and $\Jm$, the operators $\Mm^*$, $\Jm^*$,
$\Mm^*\Mm$, and $\Jm^*\Jm$ can  be expressed
as  weakly singular integral operators; this is due 
to results in~\cite[Part I, Chapter 2, sections 1.1.2, 1.1.3]{maz1991boundary}.
It follows that these operators are also continuous and compact on
$\Cv^{m}(\surface) \times  \Cv^{m}(\surface)$ and on
$\Hv^{s}(\surface) \times  \Hv^{s}(\surface)$
provided that $\surface$ has at least regularity $C^{1, \sigma}$ with $\sigma >0$.}}

\vspace{0.2in}
We now state  our all-frequency stable  weakly-singular SIE system governed by an operator of the 
form $\Ima + \widetilde{\Mm}$, where  $\widetilde{\Mm}$ is a  self-adjoint operator
that incorporates the stabilization operator intrinsically.

\vspace{0.1in}
\begin{theorem}
\label{theorem:well-posedness2}
Let  $[\e,\h]$ 
be the {\bf unique} solution to equation (\ref{eq:SIE_con}).
Then $[\e,\h]$  is also the unique solution to the  equation
\begin{equation}\label{eq:SIE}
\Big [ \Ima + \Mm^* \Mm  + \Mm + \Mm^* + \Jm^* \Jm \Big  ] \left (\begin{array}{l} \e \\ \h \end{array} \right ) = \Big [ \Ima + \Mm^*  \Big  ]  \left (\begin{array}{l} \f \\ \g \end{array} \right ). 
\end{equation}
The solution 
$(\e,\h)^T$
depends continuously 
on $(\f,\g)^T$
and  this continuous
dependence is valid in any space
$\Cv^{m, \sigma}_2(\surface)$ or 
$\Hv^{s}_2(\surface)  $ with $m \in \mathbb{N}$, $\sigma \in [0,1]$, $s \geq 0$, 
provided that $\surface$ has sufficient
  regularity. 
\end{theorem}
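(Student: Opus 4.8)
The plan is to show that equation~\eqref{eq:SIE} is equivalent to the constrained system~\eqref{eq:SIE_con}, and then to deduce well-posedness and the mapping properties from the Fredholm theory already available for $\Ima+\Mm$ together with Theorem~\ref{theorem:well-posedness}. The central algebraic observation is that the operator on the left of~\eqref{eq:SIE} factors conveniently: writing $\Lv = \Ima + \Mm$, one has
\begin{equation*}
\Ima + \Mm^*\Mm + \Mm + \Mm^* + \Jm^*\Jm
 = \Lv^*\Lv + \Jm^*\Jm,
\end{equation*}
since $\Lv^*\Lv = \Ima + \Mm + \Mm^* + \Mm^*\Mm$. Likewise the right-hand side is $\Lv^*(\f,\g)^T$. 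Thus~\eqref{eq:SIE} reads
\begin{equation*}
\bigl(\Lv^*\Lv + \Jm^*\Jm\bigr)\,(\e,\h)^T = \Lv^*\,(\f,\g)^T
\end{equation*}
on the Hilbert space $\Lv^2_2(\surface)$, where (by the statements recalled just before the theorem) $\Mm,\Jm,\Mm^*,\Jm^*$ are all compact. Hence $\Lv^*\Lv + \Jm^*\Jm = \Ima + \widetilde\Mm$ with $\widetilde\Mm := \Mm + \Mm^* + \Mm^*\Mm + \Jm^*\Jm$ compact and self-adjoint, so the equation is of the desired ``identity plus compact, self-adjoint'' type and the Fredholm alternative applies.

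First I would prove injectivity on $\Lv^2_2(\surface)$. Suppose $(\Lv^*\Lv + \Jm^*\Jm)(\u,\v)^T = \zero$. Pairing with $(\u,\v)^T$ in the natural inner product gives $\|\Lv(\u,\v)^T\|^2 + \|\Jm(\u,\v)^T\|^2 = 0$, whence $\Lv(\u,\v)^T = \zero$ \emph{and} $\Jm(\u,\v)^T = \zero$; that is, $(\u,\v)^T$ solves the homogeneous constrained system~\eqref{eq:SIE_con} (homogeneous data), and by the uniqueness part of Theorem~\ref{theorem:well-posedness} it must vanish. So $\Ima + \widetilde\Mm$ is injective on $\Lv^2_2(\surface)$, and since it is Fredholm of index zero it is boundedly invertible there. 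In particular the known solution $[\e,\h]$ of~\eqref{eq:SIE_con} satisfies $\Lv(\e,\h)^T = (\f,\g)^T$ and $\Jm(\e,\h)^T = \zero$, so applying $\Lv^*$ to the first identity and adding $\Jm^*\Jm(\e,\h)^T = \zero$ shows $[\e,\h]$ solves~\eqref{eq:SIE}; by the injectivity just established it is the unique such solution. Continuous dependence of $(\e,\h)^T$ on $(\f,\g)^T$ in $\Lv^2_2(\surface)$ follows from boundedness of $(\Ima+\widetilde\Mm)^{-1}$ and of $\Lv^*$.

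To upgrade to the H\"older and Sobolev scales $\Cv^{m,\sigma}_2(\surface)$ and $\Hv^s_2(\surface)$, I would invoke the mapping properties~\eqref{eq:Mm_c_prop}--\eqref{eq:Mm_h_prop} and~\eqref{eq:Jm_c_prop}--\eqref{eq:Jm_h_prop} (and their analogues for the adjoints, noted above) to see that $\widetilde\Mm$ is bounded, and in fact smoothing, on each of these spaces; hence $\Ima+\widetilde\Mm$ is again Fredholm of index zero there. A standard bootstrap/regularity argument then applies: the $\Lv^2_2$-solution $(\e,\h)^T$ constructed above coincides with the solution in the finer space because the kernel of $\Ima+\widetilde\Mm$ on $\Cv^{m,\sigma}_2(\surface)$ or $\Hv^s_2(\surface)$ is contained in the $\Lv^2_2$-kernel (by density of the smoother spaces in $\Lv^2_2(\surface)$ and continuity), which is trivial; so invertibility, and thus continuous dependence, transfers to all the claimed spaces, provided $\surface$ has regularity at least $C^{\max(m,1),1}$ as required by~\eqref{eq:conditions} and by the Hilbert-transform argument on $\Lv^2_2(\surface)$.

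The main obstacle is the injectivity step, specifically the passage from $\Lv(\u,\v)^T = \Jm(\u,\v)^T = \zero$ to $(\u,\v)^T = \zero$: this is exactly the content of the uniqueness assertion in the \emph{constrained} Theorem~\ref{theorem:well-posedness}, so one must be careful that the homogeneous constrained system indeed forces the trivial solution (and not merely that the constrained system with \emph{incident-wave} data has a unique solution). Granting that --- which is part of what~\cite{ganesh2014all} establishes --- the rest is the Fredholm-alternative machinery plus the already-tabulated smoothing properties of the constituent operators.
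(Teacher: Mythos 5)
Your proposal is correct and follows essentially the same route as the paper: writing the operator as $(\Ima+\Mm)^*(\Ima+\Mm)+\Jm^*\Jm$, proving injectivity on $\Lv^2_2(\surface)$ via the quadratic form and the fact $N(\Jm)\cap N(\Ima+\Mm)=\{\zero\}$ from~\cite{ganesh2014all}, invoking the Fredholm alternative for the identity-plus-compact operator, and then transferring invertibility to the H\"older and Sobolev scales using the smoothing/mapping properties (the paper does this by a bootstrap in $\Hv^r_2$ and a Riesz-theory argument in $\Cv_2$ and $\Cv^{r,\alpha}_2$, matching your sketch). Your closing caveat about needing the homogeneous constrained uniqueness, i.e.\ the trivial intersection of the nullspaces rather than merely uniqueness for incident-wave data, is exactly the point the paper resolves by citing that nullspace result as the main achievement of~\cite{ganesh2014all}.
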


\vskip 10 pt
\color{black}
The proof of Theorem \ref{theorem:well-posedness2}
first requires two general lemmas.
\begin{lemma}\label{well_posed}
Let $H$ be a Hilbert space and $\Mm,\Jm: H \to H$ two compact linear
    operators
such that $N(\Jm) \cap N(\Ima + \Mm) = \{ 0 \}$.
Then the linear operator $\Ima+ \Mm^* \Mm  + \Mm^* + \Mm + \Jm^* \Jm$ has a bounded inverse.
\end{lemma}
\begin{proof}
We note that $\Ima+\Mm^* \Mm +  \Mm^* + \Mm   = (\Ima+ \Mm)^*(\Ima+ \Mm)$.
If $x \in H$ is such that $(\Ima+ \Mm )^*(\Ima+ \Mm) x + \Jm^* \Jm x= 0$,
then $\| (\Ima+ \Mm) x\|^2 + \| \Jm x \|^2 =0$, so $x \in N(\Jm) \cap N(\Ima+ \Mm) $. It follows that, due to our assumption,
$x$ must be zero.
Since  $\Mm^* \Mm  + \Mm^* + \Mm + \Jm^* \Jm$ is compact, 
it follows that $\Ima+\Mm^* \Mm  + \Mm^* + \Mm + \Jm^* \Jm$
has a bounded inverse.
\end{proof}
\begin{lemma} 
%Let $H$ be a Hilbert space and $L$ and $K$ two compact linear operators
%from $H$ to $H$ such that $N(L) \cap N(I +K) = \{ 0 \}$.
Let $H, \Mm, \Jm$ be defined as in the previous lemma, with
$N(\Jm) \cap N(\Ima +\Mm) = \{ 0 \}$.
Assume also that the system of equations
\begin{equation} \label{eq}
\left\{
\begin{array}{l}
(\Ima + \Mm) x= b \\
\Jm x=0
\end{array}
\right.
\end{equation}
has a solution $x_0 \in H$.
Then 
$x_0$ is the unique solution to the equation
\begin{equation}  \label{eq2}
(\Ima+ \Mm^* \Mm  + \Mm^* + \Mm + \Jm^* \Jm)x = (\Ima+ \Mm^*) b.
\end{equation}
\end{lemma}
\begin{proof}
Uniqueness follows from Lemma \ref{well_posed}.
Since $x_0$ satisfies (\ref{eq}) it is clear that 
$(\Ima+ \Mm^* )(\Ima+ \Mm) x_0 = (\Ima+ \Mm^* ) b$ and $\Jm^* \Jm x_0 =0$.
Thus
$x_0$ satisfies (\ref{eq2}).
\end{proof}
Theorem \ref{theorem:well-posedness2} is now proved
thanks to the previous lemma and Theorem \ref{theorem:well-posedness}.
There remains an important question: do we have a control over the
norm of the inverse operator  $(\Ima+ \Mm^* \Mm  + \Mm^* + \Mm + \Jm^* \Jm)^{-1}$?
To address this question, we now show that
the norm of this inverse operator
can be estimated using the norms of $\Mm$ and $\Jm$, a constant that measures 
"how well" $(\Ima+ \Mm)$ is invertible on the orthogonal of $N(\Ima+ \Mm)$,
and  another constant that measures ``how 
positive'' $\Jm^* \Jm$ is on $N(\Ima+ \Mm)$.

We first note that $\Ima+ \Mm$ is bijective from $N(\Ima+ \Mm)^\perp$ to $R(\Ima+ \Mm)$, 
and that its inverse is bounded. Denote by $P$ the orthogonal projection on $N(\Ima+ \Mm)$.
There is a positive constant $C_1$ such that for all $x \in H$,
\bea \label{C1def}
\| (\Ima - P) x\| \leq C_1  \| (\Ima + \Mm) (\Ima - P) x\|.
\eea
Next we note that $N(\Ima+\Mm)$ is finite  dimensional.
Using that $N(\Jm) \cap N(\Ima +\Mm) = \{ 0 \}$, 
$\| \Jm  \|$ achieves a strictly positive minimum
on the compact set $\{ x: x \in N(\Ima+\Mm), \, \| x\| =1\}$.
Therefore, there is a positive constant $C_2$ such that for all $x \in H$,
\bea \label{C2def}
\|  P x\| \leq C_2  \| \Jm P x\|.
\eea
Assume now that
\bea
(\Ima+\Mm)^*(\Ima+\Mm) x + \Jm^*\Jm x = d, \label{xd}
\eea
for some $d$,
and estimate $x$.\\

There are two cases.
Let us first consider the case when $d \in N(\Ima+\Mm)^\perp$, 
that is,
$d = (\Ima-P) d$.
Evaluating the dot product of $x$ with (\ref{xd}), we obtain 
$$
\|(\Ima+\Mm) x\|^2 + \|\Jm x \|^2= \langle x,(\Ima-P)d \rangle,
$$
which we re-write as
$$
\|(\Ima+\Mm) (\Ima-P)x\|^2 + \|\Jm x \|^2= \langle (\Ima-P)x,d \rangle.
$$
Using (\ref{C1def}),
$$
\frac{1}{C_1^{2}} \| (\Ima-P)x\|^2 + \|\Jm x \|^2 \leq \| (\Ima-P)x\| \| d\|.
$$
Thus, we obtain 
\bea \label{I-Px}
\| (\Ima-P)x\| \leq C_1^{2} \| d\|, \qquad   \|\Jm x \|\leq C_1 \| d\|.
\eea
It now follows that,
$$
\| \Jm P x \| \leq \|\Jm x \| + \| \Jm (\Ima-P)x\| \leq \left( C_1 +
 C_1^2 \| \Jm \| \right)\| d\|,
$$
and using (\ref{C2def}),
\bea \label{Px}
\|P x\| \leq C_2 \left( C_1 +
 C_1^2 \| \Jm \| \right)\| d\|.
\eea
Thus, we obtain the bound
\bea
\|x\|^2 = \|P x\| ^2 + \|(\Ima-P) x\|^2 \leq 
\left(C_2^2 \left( C_1 +
 C_1^2 \| \Jm \|\right)^2 + C_1^4\right)\| d\|^2.
\label{bound1}
 \eea

Let us now consider the case when $d \in N(\Ima+\Mm)$ and estimate $\| x\|$
from (\ref{xd}).
Using that $Pd=d$ and evaluating the inner product of 
equation (\ref{xd}) with $x$, we obtain,
\bea \label{dp1}
\|(\Ima+\Mm) (\Ima-P)x\|^2 + \|\Jm x \|^2= \langle Px,d \rangle.
\eea
Next, we evaluate the inner product of 
equation (\ref{xd}) with $Px$ to obtain
\bea \label{dp2}
\langle \Jm Px , \Jm x \rangle= \langle Px,d \rangle.
\eea
Combining (\ref{dp1}) with (\ref{dp2}) we arrive at
\bea \label{dp3}
\|(\Ima+\Mm) (\Ima-P)x\|^2 + \frac12 \|\Jm x \|^2 \leq \frac12 \|\Jm Px \|^2.
\eea
From (\ref{dp1}) and (\ref{C1def})
\bea \label{dp4}
\|(\Ima-P)x\|^2 \leq C_1^2 \langle Px, d \rangle .
\eea
Next, we note that, using (\ref{dp2}),
\bea \label{dp5}
\|\Jm Px \|^2 = \langle Px,d \rangle - \langle \Jm Px, \Jm (\Ima-P) x\rangle \no % \\
\leq \langle Px,d\rangle  + \frac12 \| \Jm Px \|^2 + \frac12 \|\Jm (\Ima-P) x \|^2,
\eea
so combining (\ref{dp4}) and (\ref{dp5}), %for any $\beta>0$
\bea
\frac12 \|\Jm Px \|^2 \leq \left( 1 + \frac12 C_1^2 \|\Jm \|^2 \right) \langle Px,d\rangle ,
\eea
and recalling (\ref{C2def}), %, for any $\alpha >0$, 
\bea \label{dp7}
\|Px \|^2 \leq  C_2^2 \left(2 +  C_1^2 \|\Jm \|^2 \right) \langle Px,d\rangle \no %\\
\leq \frac12  \| Px \|^2
+  \frac{1}{2 } C_2^4 (2 +  C_1^2 \|\Jm \|^2)^2 \| d \|^2,
\eea
thus
\bea \label{dp8}
\|Px \|^2 \leq C_2^4 (2 +  C_1^2 \|\Jm \|^2)^2 \| d \|^2.
\eea
Using (\ref{dp4}) one more time, it follows that
\bea \label{dp9}
\|(\Ima-P)x \|^2 \leq C_1^2 C_2^2 (2 + C_1^2 \| \Jm \|^2) \|d\|^2.
\eea
Combining (\ref{dp8}) and  (\ref{dp9}), it is clear that $\| x \|$
is bounded by a constant depending only on $C_1, C_2,$ and
$\| \Jm \| \|d\|$.

We covered  two cases: if  $d \in N(\Ima + \Mm)^\perp$ then  
the estimate~(\ref{bound1}) holds, and if  $d \in N(\Ima + \Mm)$ 
then the estimate~(\ref{dp8}-\ref{dp9}) holds.
 Since $H$ is the direct sum of
$N(\Ima + \Mm)^\perp$ and $N(\Ima + \Mm)$ and equation 
(\ref{xd}) is linear, the estimate for $d \in H$ results from combining the previous
two estimates.

\begin{remark}
	In the trivial case where the nullspace of \texorpdfstring{$\Ima+\Mm$}{the second kind operator} is reduced to zero,
we have that
$(\Ima-P) = \Ima$.  Consequently, (\ref{I-Px})  reduces to 
$\| x\| \leq  C_1^{2} \| d\|$, that is, the norm of 
$(\Ima+ \Mm^* \Mm  + \Mm^* + \Mm + \Jm^* \Jm)^{-1}$ 
is bounded by $C_1^{2}$. 
\end{remark}

\vskip 10pt
\color{black}
Although it is clear that the operator 
$\Ima + \Mm^* \Mm  + \Mm^* + \Mm + \Jm^* \Jm$ 
is bounded as $\omega$ tends to zero, we have yet to argue that its inverse
 too remains bounded in this limiting case.
As mentioned in the introduction, this will prove that our
integral equation formulation does not suffer  from the well documented
low frequency breakdown. We emphasize that this property holds for the continuous 
operators and thus any numerical method that correctly approximates these operators
will inherit this no breakdown property.
In this paragraph we temporarily mark the dependence of operators on the frequency 
$\omega$ explicitly.

\begin{theorem}
\label{no low freq break down}
For $\omega >0$ the operator $\Ima + \Mm^*(\omega) \Mm(\omega)  +
 \Mm^*(\omega) + \Mm(\omega) + \Jm^*(\omega) \Jm(\omega) $ 
acting on $\Cv^{m , \sigma}_2(\surface)$ or on
$\Hv^{s}_2(\surface)  $ depends continuously on $\omega$. 
As $\omega$ tends to zero,
it converges strongly to the operator 
$\Ima + \Mm^*(0) \Mm(0)  +
 \Mm(0) + \Mm^*(0) $.
Assuming that $\epsilon^\pm$ and $\mu^\pm$ are as 
specified in Section \ref{sec:int}, this latter operator
 is invertible and its inverse is continuous. 
Accordingly, the inverse of $\Ima + \Mm^*(\omega) \Mm(\omega)  +
 \Mm^*(\omega) + \Mm(\omega) + \Jm^*(\omega) \Jm(\omega) $ 
remains uniformly bounded as $\omega \rightarrow 0$.
\end{theorem}
\begin{proof}
Denote
\bea
G_0(\x,\y)  =\frac{ 1}{4 \pi|\x - \y|}.
\eea
Write

\begin{eqnarray}
 \epsilon^+\pm G_+(x,y) - \epsilon^-\pm G_-(x,y) - 
(\epsilon^+ - \epsilon^-)G_0(x,y) 
= & \omega R(x,y)
\end{eqnarray}
where the remainder $R$ is given by
\begin{eqnarray}
R(x,y)=\sum_{l=1}^\infty
\frac{1}{l!} \omega^{l-1} |x-y|^{l-1} i^l((\mu^+\epsilon^+)^{\frac{l}{2}}-
(\mu^-\epsilon^-)^{\frac{l}{2}}).
\end{eqnarray}
Clearly, $R$ is uniformly bounded on $\surface \times \surface $ for $\omega$
in $(0,1)$. For $x\neq y$ in $\surface $, $R(x,y)$
has infinitely many derivatives and there is an upper bound 
for the supremum of these derivatives that is independent of $\omega$. 
It follows that the  boundary  integral operator associated to $R$
is continuous on $\Cv^{m , \sigma}_2(\surface)$ or on
$\Hv^{s}_2(\surface)  $ and its norm is bounded above by a constant independent of $\omega$ in $(0,1)$.
Recall the definition of the operator $\SD_\pm$
introduced in~\eqref{eq:D}  which is involved in~\eqref{eq:C} and hence
further in the definition of $\widetilde{\SC}^{(\alpha^+,\alpha^-)}$
in~\eqref{eq:sub_operator}. 
The argument above shows that
the operator $\epsilon_+ \SD_+ - \epsilon_- \SD_-$ is norm 
convergent as $\omega \rightarrow 0 $ to the operator %with kernel
\bea
\w \rightarrow (\epsilon_+ - \epsilon_- )\int_{\surface} {\frac{ \partial G_0}{\partial \nx} }(\x,\y)  \; \w(\y) \; ds(\y).
\eea
This argument can be carried over to all the other operators involved in the
definition of $\Mm(\omega)$ to show that it converges in operator norm
to $\Mm(0)$ as $\omega \rightarrow 0$
 and that $\Jm(\omega)$ converges in operator norm
to zero. In ~\cite[Appendix B]{ganesh2014all} we gave an explicit
formulation for $\Mm(0)$ and we proved that 
 $\Ima + \Mm(0) $ has a bounded inverse.
Altogether, we can claim that 
$\Ima + \Mm^*(\omega) \Mm(\omega)  +
 \Mm^*(\omega) + \Mm(\omega) + \Jm^*(\omega) \Jm(\omega) $ 
is norm convergent to 
\bea
\Ima + \Mm^*(0) \Mm(0)  +
 \Mm^*(0) + \Mm(0) = (\Ima + \Mm(0) )^*(\Ima + \Mm(0)) .
\eea
Since this operator has a bounded inverse, the proof is complete.
\end{proof}
\color{black}

{\revB{
\begin{remark}
In ~\cite[Appendix B]{ganesh2014all} no assumption was necessary 
on the genus of $\surface$. The uniform well-posedness
statement of Theorem \ref{no low freq break down}
therefore holds regardless of the genus  of $\surface$. 
\end{remark}
}}

\color{black}
\section{Numerical algorithm and convergence analysis}
\label{sec:spectral}

For our numerical approach,
we transform all of our integral operators to a reference surface $S$,
the unit sphere.
In particular, we use a bijective parametrization $\q:S \to \partial D$,
which is
not necessarily the standard spherical polar coordinates transformation,
to transform the operators $\Mm$ and $\Jm$ to
operators acting on spherical functions.
To simplify  notations, the resulting operators will still be denoted by 
$\Jm$ and $\Mm$. 
Some smoothness is required of the transformation $\q$, which we describe
in the final convergence theorem in this section.
Note, however, that in atmospheric physics~\cite{mish94,kahnert,rother:cheby},
the shapes of relevant particles satisfy this smoothness requirement.

%The precise smoothness required of the transformation for our numerical analysis
%will be described in the final convergence theorem in this section.
%For numerical analysis, we also assume in the remainder
% of this paper that the bijective parametrization $\q:S \to \partial D$
%is smooth. 

\subsection{Vector spherical polynomials}

%Recall that the associated Legendre functions
Our approximation to the surface fields on the reference surface
using a finite dimensional space is based on
the standard
spherical harmonic basis $Y_{l,j}$,
%in~\cite{gh:first}
which is orthogonal
with respect to
the natural $L^2$ inner product $(\cdot,\cdot)$ on the unit sphere $S$.
In particular, to approximate the fields $\e$ and $\h$ in~\eqref{eq:SIE},
we define
$\Y_{l,j,k} = Y_{l,j} \e_k$, where $\e_k$ for $k=1,\dots, 3$ is the natural basis
of $\mathbb{C}^3$. 
Let $\underline{\mathbb{P}}_n$ be the span of $\Y_{l,j,k} $ for
$ 0\leq l \leq n, \ |j| \leq l, \ 1 \leq k \leq 3$.
%be the space of vector spherical polynomials as introduced in \cite{gh:first}, section 3.
As emphasized in~\cite{gh:first},
for discrete approximation of the inner product on $S$ within
the algorithm,
it is essential to use a quadrature rule with $m$ points
$\hat{\x}_q^m$ and weights $\zeta_q^m$ for $q=1 , .., m$  that
computes inner products between
any two elements in $\underline{\mathbb{P}}_n$ exactly.
Such a quadrature rule with $m=2(n+1)^2$
can be found in \cite{gh:first}.

In particular, if we denote by $(\cdot,\cdot)_m$ the bilinear product obtained by approximating
the inner product $(\cdot,\cdot)$ on $\Cv_2(S)$ using the 
points $\hat{\x}_q^m$ and weights $\zeta_q^m$ for $q=1 , .., m$, then
\bea
(\G, \H)_m = (\G, \H), \quad \forall \, \G, \H \in \underline{\mathbb{P}}_n.
\label{exact}
\eea
In line with~\cite[Page~34]{gh:first},
we will denote by $\underline{{\cal L}}_n:\Cv_2(S) \to \underline{\mathbb{P}}_n$
the vector-valued fully-discrete orthogonal projection
obtained 
using the discrete inner product $(\cdot,\cdot)_m$. 

The key ingredient in our algorithm is to split
$\Mm$ into operators with weakly singular kernels
$\Mm_1$ and
smooth kernels $\Mm_2$ respectively.
Details of such a splitting are given in~\cite[Section~3]{dielectric2}.
Following details in~\cite[Section~4]{dielectric2},
for integer $n' \geq n+2$ we approximate the singular part $\Mm_1$
and regular part $\Mm_2$ of $\Mm$ by
$\Mm_{1,n'}$ and $\Mm_{2,n'}$ respectively.
We similarly approximate $\Jm$ by $\Jm_{n'}$ (and note that 
$\Jm$ is regular and so has no singular part).

Our fully discrete numerical algorithm to compute the parametrized approximation
$\Phi_n = (\e_n \circ \q,\h_n \circ \q)^T$ to the  unique solution
$\Phi = (\e \circ q,\h \circ \q)^T$, defined on $\sphere \times \sphere$, 
of the transformed single-equation~\eqref{eq:SIE}  on $\sphere$ with
right hand side $F = [\Ima + \Mm^*] (\f \circ \q,\g \circ \q)^T$ can be expressed using the
orthogonal projection operator as
\begin{equation}
  \label{eq:main}
(\Ima + \underline{{\cal L}}_n \Mm^*_{n'} \underline{{\cal L}}_n \Mm_{n'}  \underline{{\cal L}}_n +
\underline{{\cal L}}_n \Mm^*_{n'} \underline{{\cal L}}_n +
\underline{{\cal L}}_n \Mm_{n'}  \underline{{\cal L}}_n
+ \underline{{\cal L}}_n \Jm^*_{n'} \underline{{\cal L}}_n \Jm_{n'}  \underline{{\cal L}}_n)  \Phi_n = \underline{{\cal L}}_n F. 
\end{equation}
We omit a detailed description of how to implement the algorithm, which
follows closely the detailed descriptions in~\cite{gh:first,dielectric2}.
In the rest of this section we focus on proving spectrally accurate
convergence of $\Phi_n$ to $\Phi$ as $n \to \infty$ under certain
smoothness assumptions on $F$, that are dictated by the smoothness of the
surface.

%\begin{equation}\label{eq:SIE}
%\Big [ \Ima + \Mm^* \Mm  + \Mm + \Mm^* + \Jm^* \Jm \Big  ] \left (\begin{array}{l} \e \\ \h \end{array} \right ) = \Big [ \Ima + \Mm^*  \Big  ]  \left (\begin{array}{l} \f \\ \g \end{array} \right ). 
%\end{equation}

To prove well-posedness of the fully-discrete system~\eqref{eq:main}, 
we first establish convergence of the approximate surface integral operators
in an appropriate norm.
Denote by $\| \cdot \|$ the supremum norm on $\Cv_2(S)$.
The corresponding induced norm 
on the space of continuous linear operators
from $\Cv_2(S)$ to $\Cv_2(S)$
will also be denoted by $\| \cdot \|$.
The following two estimates for the vector-valued fully discrete projection operator were derived in~\cite[Appendix~A]{gh:first}, using
associated scalar counterpart results in~\cite{sloan:constructive}:
\bea \label{estGan1}
\|\psi - \underline{{\cal L}}_n \psi \| \leq C n^{1/2} \|\psi\|,
\eea 
for $\psi \in C(S)$, where
$C$ is independent of $n$, and
\bea \label{estGan2}
\|\psi - \underline{{\cal L}}_n \psi \| \leq C n^{1/2  - r - \alpha} \|\psi\|_{\Cv_2^{r,\alpha}},
\eea 
for $\psi \in \Cv_2^{r,\alpha}(S)$,
where $C$ is independent of $n$.

\begin{theorem}
For any $p$ in $\mathbb{N}$, there is a positive constant $c_p$ independent of $n$
and $n' \geq n-2$ such that
\bea
\| (\Mm - \Mm_{1,n'} - \Mm_{2,n'}) \P_n\| + \| (\Jm - \Jm_{n'}) \P_n\|
\leq c_p n^{-p} \| \P_n \|, \label{spec1}
\eea
for all $\P_n$ in $\underline{\mathbb{P}}_n$.
\end{theorem}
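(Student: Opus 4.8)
The plan is to reduce \eqref{spec1} to a uniform estimate for the action of each constituent operator on a single vector spherical harmonic $\Y_{l,j,k}$ with $l\le n$, and then sum over the $3(n+1)^2$ basis elements of $\underline{\mathbb{P}}_n$. Following the splitting recalled from \cite[Section~3]{dielectric2}, write $\Mm=\Mm_1+\Mm_2$, where $\Mm_2$ gathers all pieces whose kernels are bounded and become $C^\infty$ functions of the pulled-back sphere coordinates once the smooth parametrization $\q$, the geometric factors ($\n\circ\q$, the surface Jacobian) and the Helmholtz corrections $(e^{\i k_\pm|\x-\y|}-1)/(4\pi|\x-\y|)$ are folded in --- including the differences $\widetilde{\SB}$ and $\widetilde{\SH}$, whose individual $1/|\x-\y|^2$ singularities cancel --- while $\Mm_1$ gathers the genuinely $1/|\x-\y|$-singular static cores of $\SA_\pm,\SF_\pm,\SG_\pm,\SE_\pm$. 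The operator $\Jm$ has bounded kernels (no $1/|\x-\y|$ singularity) that are likewise $C^\infty$ in the pulled-back coordinates, so $\Jm_{n'}$ is built and analysed exactly like a piece of $\Mm_2$.

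The device underlying $\Mm_{1,n'},\Mm_{2,n'},\Jm_{n'}$, recalled from \cite{gh:first} and \cite[Section~4]{dielectric2}, is a north-pole rotation carrying the evaluation point $\hat{\x}$ to the pole; after it $ds(\hat{\y})/|\hat{\x}-\hat{\y}|=\cos(\theta/2)\,d\theta\,d\phi$ is smooth, every constituent kernel becomes an $n$-independent $C^\infty$ function of $(\theta,\phi)$ times a $\phi$-independent weight, and the integral is evaluated by a product rule with $\sim n'$ Gauss-type nodes in $\theta$ (adapted to the weight) and $\sim n'$ trapezoidal nodes in $\phi$. Apply this to $\Y_{l,j,k}$ with $l\le n$: since rotations preserve $\underline{\mathbb{P}}_n$, the pulled-back $\Y_{l,j,k}$ is still a spherical polynomial of degree $\le n$, so its $\phi$-profile is a trigonometric polynomial of degree $\le l\le n$ --- integrated exactly once $n'\ge n-2$ --- and its $\theta$-profile is, up to a fixed $\sin^{|j|}$-type weight factor handled by the quadrature, a polynomial of degree $\le n$ in $\cos\theta$. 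Approximating the $n$-independent smooth kernel factor by a polynomial of degree $\sim n'$, the product with this $\theta$-profile has degree $\lesssim 2n'$ and is reproduced exactly by the Gauss rule, so the only surviving error is the best-polynomial-approximation error, at degree $\sim n'$, of a fixed smooth function --- bounded by $c_q n^{-q}$ for every $q$, with a constant depending on $\q$ and on the wavenumbers but not on $n$. The decisive point is that $\Y_{l,j,k}$ is itself never approximated, so its $C^k$ norms, which grow with $n$, never appear; combining with $\|\Y_{l,j,k}\|_{L^\infty(S)}\le C\sqrt{l}$ from the addition theorem gives $\|(\Mm-\Mm_{1,n'}-\Mm_{2,n'})\Y_{l,j,k}\|+\|(\Jm-\Jm_{n'})\Y_{l,j,k}\|\le c_q n^{-q}\sqrt{l}$ uniformly in $l\le n$, $|j|\le l$, $k\in\{1,2,3\}$ and $n'\ge n-2$, for every $q\in\mathbb{N}$.

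To assemble, write $\P_n=\sum_{l\le n,\,|j|\le l,\,1\le k\le3} a_{l,j,k}\Y_{l,j,k}$; by Cauchy--Schwarz $\sum|a_{l,j,k}|\le C(n+1)\|\P_n\|_{L^2(S)}\le C'(n+1)\|\P_n\|$, so summing the per-harmonic bound with $q=p+2$ yields $\|(\Mm-\Mm_{1,n'}-\Mm_{2,n'})\P_n\|+\|(\Jm-\Jm_{n'})\P_n\|\le c_p n^{-p}\|\P_n\|$, which is \eqref{spec1}. I expect the main obstacle to be the rigorous verification, for the weakly singular block $\Mm_1$, that the specially designed polar-angle quadratures of \cite{gh:first,dielectric2} really do integrate the degree-$\le n$ polynomial $\theta$-profiles exactly for $n'\ge n-2$ and leave only an $n$-independent-smooth-factor residual --- that is, that keeping $\P_n$ in the exactly-integrated part genuinely neutralizes its growing Sobolev norms. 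Carrying the normal-trace multiplications and the $2\times2$ block structure of $\Mm$ through this reduction is routine, since these are bounded operations on $\Cv_2(S)$ whose interplay with the discrete projection is already quantified in \eqref{estGan1}--\eqref{estGan2}.
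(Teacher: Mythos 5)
You should first be aware that the paper does not prove this theorem at all: its proof is the single line ``See \cite{gh:first}'', so the only meaningful comparison is with the quadrature analysis of \cite{gh:first} (and its adaptation in \cite{dielectric2}), which is what the paper relies on. Your sketch reconstructs essentially that argument: split $\Mm$ into a weakly singular and a smooth part, treat $\Jm$ as purely smooth, rotate the evaluation point to the north pole so the $1/|\x-\y|$ singularity is absorbed into the benign weight $\cos(\theta/2)$, use that the rotated density is still in $\underline{\mathbb{P}}_n$ and hence is handled exactly by the discrete rules once $n'\gtrsim n$, and charge the entire error to the superalgebraic best polynomial approximation of an $n$-independent smooth kernel factor, uniformly in the evaluation point. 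The only structural difference is your assembly: you prove a per-harmonic bound for each $\Y_{l,j,k}$ and sum with Cauchy--Schwarz, losing a factor of order $n^{3/2}$, whereas the cited analysis bounds the error directly for an arbitrary $\P_n$ in the sup norm via the uniform boundedness of the quadrature weights; because the decay is superalgebraic the loss is harmless, so this difference is cosmetic.

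As a standalone proof, however, your proposal stops exactly where the work is: the claim that the $\theta$--$\phi$ rules underlying $\Mm_{1,n'}$ integrate the relevant products exactly and leave only a smooth-factor residual is asserted, not established, and you flag this yourself as the ``main obstacle''. Concretely, the rotated $\theta$-profile of $\Y_{l,j,k}$ is an associated Legendre function, i.e.\ $\sin^{|j|}\theta$ times a polynomial in $\cos\theta$, which for odd $|j|$ is \emph{not} a polynomial in $\cos\theta$; exactness only emerges through the interplay with the azimuthal rule (or, in the actual construction of \cite{gh:first,dielectric2}, because the smooth part of the integrand --- density included --- is replaced by a discrete projection onto $\underline{\mathbb{P}}_{n'}$ that reproduces the product of the degree-$\le n$ density with a degree-$\lesssim n'-n$ approximant of the kernel factor, after which the singular weight is integrated analytically). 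One also needs the derivative bounds of the pulled-back kernel factors to be uniform over all rotated evaluation points $\xh$, and a sup-norm stability bound (of order $\sqrt{n'}$) for the discrete projection, to justify your ``$n$-independent residual'' claim. These are precisely the technical contents of \cite{gh:first}; without them your text is an accurate plan of that proof rather than a proof.
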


\begin{proof}
  See~\cite{gh:first}.
\end{proof}

To simplify the notation
set $\Mm_{n'}= \Mm_{1,n'} + \Mm_{2,n'}$. Using (\ref{estGan1}), 
there is an interesting way to rewrite  estimate (\ref{spec1}) in terms of norm
operators from $\Cv_2(S)$ to $\Cv_2(S)$,
\bea
\| (\Mm - \Mm_{n'} ) \underline{{\cal L}}_n\| + \| (\Jm - \Jm_{n'}) \underline{{\cal L}}_n\|
\leq c_p n^{1/2-p}. \label{spec2}
\eea
%Since $\Mm$ and $\Jm$ are continuous compact operators from $L^2(S)$ to 
%itself. Since $K$ and $L$  are defined by weakly singular kernels on $S$, their
%adjoints $K^*$ and $L^*$ are simply obtained by switching the role of the two variables in these kernels and taking
%a complex conjugate.
The adjoint operators $\Mm^*$ and $\Jm^*$ can also be approximated as in \cite{gh:first} 
by $\Mm^*_{n'}$ and $\Jm^*_{n'}$, and the following estimate holds
\bea
\| (\Mm^* - \Mm^*_{n'} ) \underline{{\cal L}}_n\| + \| (\Jm^* - \Jm^*_{n'}) \underline{{\cal L}}_n\|
\leq c_p n^{1/2-p}. \label{specstar}
\eea

Proving existence and uniqueness of the solution of the discrete
  analogue of
  $ (\Ima + \Mm^* \Mm  + \Mm + \Mm^* + \Jm^* \Jm ) \Phi = F$
  in the space $\underline{\mathbb{P}}_n$,
  and that the solution converges to $\Phi$,
  is rather involved.
The main hurdle is that the norm of the 
projection operator $\underline{{\cal L}}_n$
from  $\Cv_2(S)$ to $\underline{\mathbb{P}}_n$ becomes unbounded as $n$ grows large.
 In fact, it was shown in~\cite{sloan:constructive} that this norm is exactly of the order $n^{1/2}$ (for a scalar counterpart of $\underline{{\cal L}}_n$).

\begin{lemma} \label{step1}
  For all sufficiently large $n$, 
  the linear operator
  \begin{displaymath}
 \Ima +   \Mm^*\underline{{\cal L}}_n \Mm \underline{{\cal L}}_n + 
\Mm^* \underline{{\cal L}}_n + \Mm \underline{{\cal L}}_n + 
\Jm^*\underline{{\cal L}}_n \Jm\underline{{\cal L}}_n:
\Cv_2(S) \to \Cv_2(S)
    \end{displaymath}
is invertible.
For all sufficiently large $n$,
  \begin{align*}
 \| (\Ima +   \Mm^*\underline{{\cal L}}_n \Mm \underline{{\cal L}}_n + 
\Mm^* \underline{{\cal L}}_n + \Mm \underline{{\cal L}}_n +
\Jm^*\underline{{\cal L}}_n \Jm\underline{{\cal L}}_n )^{-1} \|
& \leq C n^{1/2}, \\
\| \Mm^*\underline{{\cal L}}_n \Mm \underline{{\cal L}}_n + 
\Mm^* \underline{{\cal L}}_n + \Mm \underline{{\cal L}}_n +
\Jm^*\underline{{\cal L}}_n \Jm\underline{{\cal L}}_n \|
& \leq C n^{1/2},
  \end{align*}
where $C$ is a constant.
\end{lemma}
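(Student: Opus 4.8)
The plan is to deduce invertibility from the Riesz--Fredholm theory together with a compactness/contradiction argument that, in the limit $n\to\infty$, collapses the discrete equation onto the continuous one governed by $\Ima+\Mm^*\Mm+\Mm+\Mm^*+\Jm^*\Jm$, whose invertibility on $\Cv_2(S)$ is supplied by Theorem~\ref{theorem:well-posedness2}. Abbreviate the operator in the statement as $\Ima+K_n$ with $K_n:=\Mm^*\underline{{\cal L}}_n\Mm\underline{{\cal L}}_n+\Mm^*\underline{{\cal L}}_n+\Mm\underline{{\cal L}}_n+\Jm^*\underline{{\cal L}}_n\Jm\underline{{\cal L}}_n$. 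Since $\underline{{\cal L}}_n$ is the rightmost factor of each summand, $K_n$ has finite rank, so $\Ima+K_n$ is a compact perturbation of the identity and, by the Fredholm alternative, is boundedly invertible on $\Cv_2(S)$ as soon as it is injective. For the second (easy) estimate, fix $\alpha\in(1/2,1)$: because $\Mm,\Mm^*,\Jm,\Jm^*$ map $\Cv_2(S)$ boundedly into $\Cv^{0,\alpha}_2(S)$ (see~\eqref{eq:Mm_c_prop}--\eqref{eq:Jm_c_prop} and the corresponding statements for the adjoints), estimate~\eqref{estGan2} gives $\|\underline{{\cal L}}_n\Mm\|,\|\underline{{\cal L}}_n\Mm^*\|,\|\underline{{\cal L}}_n\Jm\|,\|\underline{{\cal L}}_n\Jm^*\|\le C$ uniformly in $n$, while~\eqref{estGan1} gives $\|\underline{{\cal L}}_n\|\le Cn^{1/2}$; combining these yields $\|K_n\|\le Cn^{1/2}$. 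The same two facts give the observation used repeatedly below: $\underline{{\cal L}}_ng_n\to g$ in $\Cv_2(S)$ whenever $g_n\to g$ in $\Cv_2(S)$ and $\{g_n\}$ is bounded in $\Cv^{0,\alpha}_2(S)$, because $\|(\Ima-\underline{{\cal L}}_n)g_n\|\le Cn^{1/2-\alpha}\|g_n\|_{\Cv^{0,\alpha}_2}\to 0$.

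For the first estimate $\|(\Ima+K_n)^{-1}\|\le Cn^{1/2}$ (which a fortiori re-establishes injectivity, hence invertibility, for large $n$), I argue by contradiction. Suppose there are $n\to\infty$, data $F_n$ with $\|F_n\|=1$, and solutions $\psi_n=(\Ima+K_n)^{-1}F_n$ with $t_n:=\|\psi_n\|\gg n^{1/2}$. Set $\widehat\psi_n:=\psi_n/t_n$, so $\|\widehat\psi_n\|=1$, $\widehat\psi_n+K_n\widehat\psi_n=r_n$ with $r_n:=F_n/t_n$ obeying $n^{1/2}\|r_n\|=n^{1/2}/t_n\to 0$, and $K_n\widehat\psi_n=\Mm^*\underline{{\cal L}}_n\Mm p_n+\Mm^*p_n+\Mm p_n+\Jm^*\underline{{\cal L}}_n\Jm p_n$ where $p_n:=\underline{{\cal L}}_n\widehat\psi_n\in\underline{\mathbb{P}}_n$. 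The argument splits according to whether $\beta_n:=\|p_n\|$ stays bounded.

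If $\beta_n$ is unbounded, pass to a subsequence with $\beta_n\to\infty$ and put $\vartheta_n:=p_n/\beta_n$, $\|\vartheta_n\|=1$. By compactness of $\Mm,\Mm^*,\Jm$ on $\Cv_2(S)$, extract a further subsequence along which $\Mm\vartheta_n$, $\Mm^*\vartheta_n$, $\Jm\vartheta_n$ converge in $\Cv_2(S)$; these sequences are bounded in $\Cv^{0,\alpha}_2(S)$, so the projectors inside $K_n$ drop out in the limit by the observation above, whence $\beta_n^{-1}K_n\widehat\psi_n$ converges in $\Cv_2(S)$ to some $w$ bounded in $\Cv^{0,\alpha}_2(S)$. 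Dividing $\widehat\psi_n+K_n\widehat\psi_n=r_n$ by $\beta_n$ and letting $n\to\infty$ forces $w=\zero$ (since $\|\widehat\psi_n\|/\beta_n=1/\beta_n\to 0$ and $\|r_n\|/\beta_n\to 0$); applying $\underline{{\cal L}}_n$ to the same identity, dividing by $\beta_n$, and using $\|\underline{{\cal L}}_nr_n\|/\beta_n\le Cn^{1/2}\|r_n\|/\beta_n\to 0$ gives $\vartheta_n=-\underline{{\cal L}}_n(\beta_n^{-1}K_n\widehat\psi_n)+\underline{{\cal L}}_nr_n/\beta_n\to-w=\zero$, contradicting $\|\vartheta_n\|=1$. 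If instead $\beta_n\le C$, then $\{p_n\}$ is bounded in $\Cv_2(S)$, hence $K_n\widehat\psi_n$ is bounded in $\Cv^{0,\alpha}_2(S)$, so $\widehat\psi_n=r_n-K_n\widehat\psi_n$ converges along a subsequence in $\Cv_2(S)$ to some $u$ with $\|u\|=\lim\|\widehat\psi_n\|=1$, and also $p_n=\underline{{\cal L}}_n\widehat\psi_n\to u$ (using $\|\underline{{\cal L}}_nr_n\|\le Cn^{1/2}\|r_n\|\to 0$ and the observation above applied to $-K_n\widehat\psi_n$). Passing to the limit in $\widehat\psi_n=r_n-K_n\widehat\psi_n$ (with $p_n\to u$ inside $K_n$ and the inner projectors again dropping out) gives $(\Ima+\Mm^*\Mm+\Mm+\Mm^*+\Jm^*\Jm)u=\zero$, so $u=\zero$ by Theorem~\ref{theorem:well-posedness2}, contradicting $\|u\|=1$. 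Either way we reach a contradiction, which yields $\|(\Ima+K_n)^{-1}\|\le Cn^{1/2}$ and, in particular, invertibility of $\Ima+K_n$ for all large $n$. (Running the same dichotomy with $F_n\equiv\zero$, hence $r_n\equiv\zero$, proves injectivity on its own, should one wish to separate that step.)

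The main obstacle is precisely the unboundedness $\|\underline{{\cal L}}_n\|\sim n^{1/2}$ of the discrete projector, which blocks both a naive Neumann-series perturbation of $\Ima+\Mm^*\Mm+\Mm+\Mm^*+\Jm^*\Jm$ and a direct appeal to classical collectively-compact operator theory. What makes the limiting argument go through is the weak singularity (smoothing) of $\Mm,\Mm^*,\Jm,\Jm^*$, i.e. $\Cv_2(S)\to\Cv^{0,\alpha}_2(S)$ with $\alpha>1/2$: it turns $\underline{{\cal L}}_n$-images of bounded sets into families that are bounded in $\Cv^{0,\alpha}_2(S)$ and on which $\underline{{\cal L}}_n\to\Ima$ in the $\Cv_2(S)$-sense by~\eqref{estGan2}, so that the discrete equation degenerates, in the limit, to the invertible continuous one. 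Two bookkeeping points need care: extracting a single subsequence along which all three of $\Mm\vartheta_n$, $\Mm^*\vartheta_n$, $\Jm\vartheta_n$ (and their images under $\underline{{\cal L}}_n$) converge, and verifying that the operators transported to the reference sphere via $\q$ still satisfy the smoothing bounds~\eqref{eq:Mm_c_prop}--\eqref{eq:Jm_c_prop}, which is the source of the regularity requirement on $\q$ (equivalently on $\surface$).
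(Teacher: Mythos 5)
Your proposal is correct, but it reaches the two conclusions by a genuinely different route from the paper. For invertibility you observe that each summand of $K_n:=\Mm^*\underline{{\cal L}}_n\Mm\underline{{\cal L}}_n+\Mm^*\underline{{\cal L}}_n+\Mm\underline{{\cal L}}_n+\Jm^*\underline{{\cal L}}_n\Jm\underline{{\cal L}}_n$ has $\underline{{\cal L}}_n$ as its rightmost factor, so $K_n$ is finite rank and injectivity suffices; you then obtain injectivity and the bound $\|(\Ima+K_n)^{-1}\|\leq Cn^{1/2}$ by a contradiction/compactness argument in which normalized solutions are split according to whether $\|\underline{{\cal L}}_n\widehat\psi_n\|$ stays bounded, and the smoothing property $\Mm,\Mm^*,\Jm,\Jm^*:\Cv_2(S)\to\Cv_2^{\alpha}(S)$ with $\alpha>\frac12$ (via~\eqref{estGan2}) is used to make the interior projections drop out, collapsing the discrete equation onto $\Ima+\Mm^*\Mm+\Mm^*+\Mm+\Jm^*\Jm$, which is invertible on $\Cv_2$. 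The paper instead proceeds quantitatively in three steps: it first shows $\|(\Ima-\underline{{\cal L}}_n)\Mm\|,\|(\Ima-\underline{{\cal L}}_n)\Mm^*\|,\|(\Ima-\underline{{\cal L}}_n)\Jm\|\to 0$ and then $\|(\Ima-\underline{{\cal L}}_n)\Mm^*\underline{{\cal L}}_n\Mm\|\to 0$, so that the ``inner-projected'' operator $\Ima+\underline{{\cal L}}_n\Mm^*\underline{{\cal L}}_n\Mm+\underline{{\cal L}}_n\Mm^*+\underline{{\cal L}}_n\Mm+\underline{{\cal L}}_n\Jm^*\underline{{\cal L}}_n\Jm$ is a small-norm perturbation of the continuous operator and has uniformly bounded inverses; it then transfers to your operator (projector on the right) via the algebraic identity $(I+BA)^{-1}=I-B(I+AB)^{-1}A$ with $A=\underline{{\cal L}}_n$, which makes both the invertibility and the $n^{1/2}$ factor (one explicit occurrence of $\|\underline{{\cal L}}_n\|$) completely explicit; in your argument the same $n^{1/2}$ enters only implicitly, through the hypothesis $t_n\gg n^{1/2}$ needed to kill $\|\underline{{\cal L}}_n r_n\|$. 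What the paper's route buys is an explicit formula for the inverse and uniform bounds on the intermediate operator, in a style that is reused almost verbatim for the fully discrete operators in the subsequent theorem; what your route buys is economy of computation (no identity, no chain of norm estimates), at the price of a non-constructive constant and the need for careful bookkeeping of subsequences. Your final estimate $\|K_n\|\leq Cn^{1/2}$ coincides in substance with the paper's. One presentational point: as you note in your parenthetical, the contradiction argument presupposes $(\Ima+K_n)^{-1}$ exists, so the clean ordering is injectivity (your dichotomy with $r_n\equiv\zero$), then invertibility by the finite-rank Fredholm alternative, then the norm bound; with that ordering made explicit the proof is complete.
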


\begin{proof}
Because $\Mm,\Jm:\Cv_2(S) \to \Cv_2^\alpha(S)$ are continuous operators for 
any $\alpha$ in $(0,1)$ 
so in particular for $\alpha$ in $(\frac{1}{2},1)$), 
it follows from (\ref{estGan2})
that
\bean
\| (\Ima - \underline{{\cal L}}_n) \Mm \|+ \|(\Ima - \underline{{\cal L}}_n) \Jm \|
+ \| (\Ima - \underline{{\cal L}}_n) \Mm^* \|
 \rightarrow 0,
 \quad \mbox{as $n \to \infty$,}
\eean
so 
\bean
\| \Mm^*(\Ima - \underline{{\cal L}}_n) \Mm 
+ (\Ima - \underline{{\cal L}}_n) \Mm^* + (\Ima - \underline{{\cal L}}_n) \Mm +
\Jm^*(\Ima - \underline{{\cal L}}_n) \Jm \| \rightarrow 0,
\quad \mbox{as $n \to \infty$.}
\eean
Then using that $\Ima + \Mm^* \Mm  + \Mm +\Mm^* + \Jm^* \Jm$ is invertible, it follows
that $ \Ima +  \Mm^*\underline{{\cal L}}_n \Mm + \underline{{\cal L}}_n \Mm^* + 
\underline{{\cal L}}_n \Mm+ \Jm^*\underline{{\cal L}}_n \Jm$
is invertible with $n > N$ for some $N$, and 
$ \|(\Ima +  \Mm^*\underline{{\cal L}}_n \Mm + \underline{{\cal L}}_n \Mm^* + 
\underline{{\cal L}}_n \Mm+ \Jm^*\underline{{\cal L}}_n \Jm)^{-1} \|$
is uniformly bounded for $n>N$ and convergent to $ \|(\Ima +  \Mm^* \Mm + \Mm^* +
\Mm + \Jm^* \Jm)^{-1} \|$.
%Assume now that 
%\bean
%(I +  \underline{{\cal L}}_n K^*\underline{{\cal L}}_n K + L^*\underline{{\cal L}}_n L
%\underline{{\cal L}}_n) \Phi =0,
%\eean
%for some $\Phi$ in $C^(S)$.
Let $\Phi$ be in $\Cv_2(S)$ and fix 
$\alpha \in (1/2,1)$. Then using (\ref{estGan2}),
\bean
\| (\Ima - \underline{{\cal L}}_n) \Mm^* \underline{{\cal L}}_n \Mm \Phi \| & \leq &
C n^{1/2 - \alpha} \|  \Mm^* \underline{{\cal L}}_n \Mm \Phi \|_{\Cv_2^\alpha} \\
 & \leq & C n^{1/2 - \alpha} \|   \underline{{\cal L}}_n \Mm \Phi \| \\
& \leq &  C n^{1/2 - \alpha} \|   (\Ima -\underline{{\cal L}}_n ) \Mm \Phi \|
 + C n^{1/2 - \alpha} \|    \Mm \Phi \|  \\
& \leq &  C n^{1-  2\alpha} \|    \Mm \Phi \|_{C^\alpha}
 + C n^{1/2 - \alpha} \|    \Mm \Phi \|  \\
& \leq &  C n^{1-  2\alpha} \|   \Phi \|
 + C n^{1/2 - \alpha} \|     \Phi \|.
\eean
Thus $\|(\Ima - \underline{{\cal L}}_n)\Mm^*\underline{{\cal L}}_n \Mm \| \to 0$
as $n \to \infty$. 
A similar result holds for $\|(\Ima - \underline{{\cal L}}_n)\Jm^*\underline{{\cal L}}_n \Jm \|$.
%\bean
%\| (I - \underline{{\cal L}}_n)K^*\underline{{\cal L}}_n K +
%(I - \underline{{\cal L}}_n) L^*\underline{{\cal L}}_n L \| \leq C n^{1/2 - \alpha},
%\eean
It follows
that $ \Ima + \underline{{\cal L}}_n \Mm^*\underline{{\cal L}}_n \Mm + 
\underline{{\cal L}}_n \Mm^* + \underline{{\cal L}}_n \Mm+
\underline{{\cal L}}_n \Jm^*\underline{{\cal L}}_n \Jm$
is invertible for $n \geq N_2$ for some $N_2$, and
$ \|(\Ima + \underline{{\cal L}}_n \Mm^*\underline{{\cal L}}_n \Mm + 
\underline{{\cal L}}_n \Mm^* + \underline{{\cal L}}_n \Mm+
\underline{{\cal L}}_n \Jm ^*\underline{{\cal L}}_n \Jm)^{-1} \|$
is uniformly bounded for $n>N_2$ and convergent to 
$ \|(\Ima +  \Mm^* \Mm + \Mm^* + \Mm + \Jm^* \Jm)^{-1} \|$.\\

A simple calculation using the identity
\bean
  (I +BA) (I -B(I + AB )^{-1}A)  =(I -B(I + AB )^{-1}A) (I +BA) =I
	\eean
shows that
the inverse of $\Ima +   \Mm^*\underline{{\cal L}}_n \Mm \underline{{\cal L}}_n+ 
 \Mm^*\underline{{\cal L}}_n + \Mm \underline{{\cal L}}_n+ 
 \Jm^*\underline{{\cal L}}_n \Jm \underline{{\cal L}}_n $
 is
 $$
\Ima - (\Mm^*\underline{{\cal L}}_n \Mm + \Mm^* + \Mm+  \Jm^*\underline{{\cal L}}_n \Jm)
 (\Ima + \underline{{\cal L}}_n \Mm^*\underline{{\cal L}}_n \Mm + 
\underline{{\cal L}}_n \Mm^* + \underline{{\cal L}}_n \Mm+
\underline{{\cal L}}_n\Jm^*\underline{{\cal L}}_n \Jm)^{-1} \underline{{\cal L}}_n.
$$
At this stage we recall that 
$ \|(\Ima + \underline{{\cal L}}_n \Mm^*\underline{{\cal L}}_n \Mm +
 \underline{{\cal L}}_n \Mm^* + \underline{{\cal L}}_n \Mm +
 \underline{{\cal L}}_n\Jm^*\underline{{\cal L}}_n \Jm)^{-1} \|$ and 
$\|\Mm^*\underline{{\cal L}}_n \Mm + \Mm^* + 
\Mm + \Jm^*\underline{{\cal L}}_n \Jm\|$ are bounded, and
that $\|\underline{{\cal L}}_n \| \leq C n^{1/2}$ to conclude that
\bea
\| (\Ima +   \Mm^*\underline{{\cal L}}_n \Mm \underline{{\cal L}}_n+ 
 \Mm^*\underline{{\cal L}}_n + \Mm \underline{{\cal L}}_n+ 
\Jm^*\underline{{\cal L}}_n \Jm \underline{{\cal L}}_n)^{-1}\| \leq Cn^{1/2}. \label{Nln}
\eea
Similar arguments  show that 
 $\| \Mm^*\underline{{\cal L}}_n \Mm \underline{{\cal L}}_n + 
\Mm^* \underline{{\cal L}}_n + \Mm \underline{{\cal L}}_n +
 \Jm^*\underline{{\cal L}}_n \Jm\underline{{\cal L}}_n \|
\leq Cn^{1/2}$.
\end{proof}

\begin{theorem}
The linear operator 
\bean
 \Ima +  \underline{{\cal L}}_n \Mm^*_{n'}   \underline{{\cal L}}_n \Mm_{n'}  \underline{{\cal L}}_n
+  \underline{{\cal L}}_n \Mm^*_{n'}   \underline{{\cal L}}_n +
\underline{{\cal L}}_n \Mm_{n'}  \underline{{\cal L}}_n 
 + \underline{{\cal L}}_n \Jm^*_{n'}   \underline{{\cal L}}_n \Jm_{n'}  \underline{{\cal L}}_n: \underline{\mathbb{P}}_n \to \underline{\mathbb{P}}_n
\eean
is invertible for all sufficiently large $n$  and $n' \geq n+2$.
Additionally, if $\Phi \in C(S)$ is the unique solution to
\bea
 (\Ima + \Mm^*\Mm + \Mm^* + \Mm + \Jm^*\Jm) \Phi = F, \label{Phieq}
\eea
where, for $r > 2$ (depending on the smoothness of the 
parametrization map $\q$),  $F$ is in $C^{r, \alpha}(S)$ 
with $\alpha \in (1/2,1)$,  and if for sufficiently large $n$,
$\Phi_n \in \underline{\mathbb{P}}_n$
solves 
\bea \label{discrete_eq}
( \Ima +  \underline{{\cal L}}_n \Mm^*_{n'}   \underline{{\cal L}}_n \Mm_{n'}  \underline{{\cal L}}_n
+  \underline{{\cal L}}_n \Mm^*_{n'}   \underline{{\cal L}}_n +
\underline{{\cal L}}_n \Mm_{n'}  \underline{{\cal L}}_n 
 + \underline{{\cal L}}_n \Jm^*_{n'}   \underline{{\cal L}}_n \Jm_{n'}  \underline{{\cal L}}_n)
\Phi_n = \underline{{\cal L}}_n F, \label{discrete}
\eea
then there is a constant $C$ independent of $n$ and $n' \geq n +2$ 
such that 
\bea
\| \Phi - \Phi_n \| \leq
C n^{5/2-r - \alpha}  \| F \|_{\Cv_2^{r,\alpha}}. \label{main_est}
\eea

\end{theorem}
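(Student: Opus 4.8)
Write $\mathcal{S}=\Ima+\Mm^*\Mm+\Mm+\Mm^*+\Jm^*\Jm$ and $\mathcal{K}=\mathcal{S}-\Ima$, so $\mathcal{S}=\Ima+\mathcal{K}$ is the boundedly invertible operator of Theorem~\ref{theorem:well-posedness2}, and let $A_n$ denote the operator on the left of~\eqref{discrete}. Since every summand of $A_n$ other than $\Ima$ carries an outermost $\underline{{\cal L}}_n$, $A_n$ leaves $\underline{\mathbb{P}}_n$ invariant, so~\eqref{discrete} is genuinely a finite-dimensional problem. The plan is the classical three-step argument for fully-discrete projection methods: (i) invertibility of $A_n|_{\underline{\mathbb{P}}_n}$ with $\|(A_n|_{\underline{\mathbb{P}}_n})^{-1}\|\le Cn^{1/2}$; (ii) the regularity bound $\|\Phi\|_{\Cv_2^{r,\alpha}(S)}\le C\|F\|_{\Cv_2^{r,\alpha}(S)}$ for the solution of~\eqref{Phieq}; and (iii) a consistency estimate for $\|A_n\underline{{\cal L}}_n\Phi-\underline{{\cal L}}_n F\|$, combined with the approximation bound $\|(\Ima-\underline{{\cal L}}_n)\Phi\|\le Cn^{1/2-r-\alpha}\|\Phi\|_{\Cv_2^{r,\alpha}(S)}$ from~\eqref{estGan2}.

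For (i) I would first remove the $n'$-discretization: replacing $\Mm_{n'},\Mm^*_{n'},\Jm_{n'},\Jm^*_{n'}$ by the exact operators perturbs $A_n$ by terms of norm $\le C\,c_p\,n^{K-p}$ (a fixed $K$ coming from the $\|\underline{{\cal L}}_n\|\le Cn^{1/2}$ factors, and $p\in\mathbb{N}$ arbitrary) by~\eqref{spec2} and~\eqref{specstar}, which is negligible for $p$ large, uniformly in $n'\ge n+2$. It then remains to invert, on $\underline{\mathbb{P}}_n$, the operator $\Ima+\underline{{\cal L}}_n\Mm^*\underline{{\cal L}}_n\Mm\underline{{\cal L}}_n+\underline{{\cal L}}_n\Mm^*\underline{{\cal L}}_n+\underline{{\cal L}}_n\Mm\underline{{\cal L}}_n+\underline{{\cal L}}_n\Jm^*\underline{{\cal L}}_n\Jm\underline{{\cal L}}_n$; writing it as $\Ima+B'A'$ with $A'=\underline{{\cal L}}_n$ and $B'=\underline{{\cal L}}_n(\Mm^*\underline{{\cal L}}_n\Mm+\Mm^*+\Mm+\Jm^*\underline{{\cal L}}_n\Jm)$ and applying the identity $(I+BA)^{-1}=I-B(I+AB)^{-1}A$ exactly as in Lemma~\ref{step1}, its inversion reduces to that of $\Ima+\underline{{\cal L}}_n\Mm^*\underline{{\cal L}}_n\Mm+\underline{{\cal L}}_n\Mm^*+\underline{{\cal L}}_n\Mm+\underline{{\cal L}}_n\Jm^*\underline{{\cal L}}_n\Jm$, whose bounded invertibility is established within the proof of Lemma~\ref{step1} from the invertibility of $\mathcal{S}$ and from $\|(\Ima-\underline{{\cal L}}_n)\Mm\|,\|(\Ima-\underline{{\cal L}}_n)\Mm^*\|,\|(\Ima-\underline{{\cal L}}_n)\Jm\|\to0$. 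Since $B'$ maps into $\underline{\mathbb{P}}_n$, the resulting inverse preserves $\underline{\mathbb{P}}_n$, and the one surviving factor $\underline{{\cal L}}_n$ yields the bound $Cn^{1/2}$. I expect this to be the only genuine obstacle: a naive Neumann-series perturbation of $\mathcal{S}^{-1}$ fails, because $\underline{{\cal L}}_n$ acts as the identity on $\underline{\mathbb{P}}_n$ yet has operator norm $\sim n^{1/2}$, and the $O(n^{1/2-\alpha})$ consistency of $(\Ima-\underline{{\cal L}}_n)\Mm$ on raw polynomials of $\underline{\mathbb{P}}_n$ does not beat it; the algebraic manoeuvre of pushing all projections to one side, as in Lemma~\ref{step1}, is what rescues the argument.

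For (ii), the mapping properties of Section~\ref{sec:oper} give $\Mm,\Mm^*,\Jm,\Jm^*:\Cv_2^{m}(S)\to\Cv_2^{m,\alpha'}(S)$ boundedly for all $m<m^*$ and $\alpha'\in(0,1)$, so $\mathcal{K}$ maps $\Cv_2^{r,\alpha}(S)$ into $\Cv_2^{r,\alpha'}(S)$ for $\alpha'\in(\alpha,1)$, hence (by compactness of $\Cv_2^{r,\alpha'}(S)\hookrightarrow\Cv_2^{r,\alpha}(S)$) is compact on $\Cv_2^{r,\alpha}(S)$ whenever the surface is smooth enough that $r<m^*$. Then $\mathcal{S}=\Ima+\mathcal{K}$ is Fredholm of index zero on $\Cv_2^{r,\alpha}(S)$ and injective there, inheriting injectivity from $\Cv_2(S)$, so it is boundedly invertible on $\Cv_2^{r,\alpha}(S)$; thus $\Phi=\mathcal{S}^{-1}F\in\Cv_2^{r,\alpha}(S)$ with $\|\Phi\|_{\Cv_2^{r,\alpha}(S)}\le C\|F\|_{\Cv_2^{r,\alpha}(S)}$.

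For (iii), write $\underline{{\cal L}}_n\Phi-\Phi_n=(A_n|_{\underline{\mathbb{P}}_n})^{-1}(A_n\underline{{\cal L}}_n\Phi-\underline{{\cal L}}_n F)$ and, using $\mathcal{S}\Phi=F$, express $A_n\underline{{\cal L}}_n\Phi-\underline{{\cal L}}_n F=\underline{{\cal L}}_n\big[L_n\underline{{\cal L}}_n\Phi-\mathcal{K}\Phi\big]$, where $L_n$ is such that $A_n=\Ima+\underline{{\cal L}}_n L_n$. Each of the finitely many summands of $L_n\underline{{\cal L}}_n\Phi-\mathcal{K}\Phi$ is treated by first trading $\Mm_{n'},\Jm_{n'}$ (and adjoints) for the exact operators at cost $O(c_p n^{K-p})$ via~\eqref{spec2},~\eqref{specstar}, and then repeatedly using the splitting $\Mm\underline{{\cal L}}_n\Phi=\Mm\Phi-\Mm(\Ima-\underline{{\cal L}}_n)\Phi$ (and the analogues for $\Mm^*,\Jm,\Jm^*$): the leading term $\Mm\Phi$ lies in $\Cv_2^{r,\alpha}(S)$ with norm controlled by $\|\Phi\|_{\Cv_2^{r,\alpha}(S)}$, so its projection error is $O(n^{1/2-r-\alpha})$ by~\eqref{estGan2}, while the correction $\Mm(\Ima-\underline{{\cal L}}_n)\Phi$ is $O(n^{1/2-r-\alpha})$ in $\Cv_2^{0,\alpha}(S)$ and, after a further use of~\eqref{estGan1}--\eqref{estGan2} and $\alpha>1/2$, only lower order. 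Collecting the accumulated factors of $\|\underline{{\cal L}}_n\|\le Cn^{1/2}$ together with $\|(A_n|_{\underline{\mathbb{P}}_n})^{-1}\|\le Cn^{1/2}$ gives $\|\underline{{\cal L}}_n\Phi-\Phi_n\|\le Cn^{5/2-r-\alpha}\|F\|_{\Cv_2^{r,\alpha}(S)}$, and adding $\|(\Ima-\underline{{\cal L}}_n)\Phi\|\le Cn^{1/2-r-\alpha}\|\Phi\|_{\Cv_2^{r,\alpha}(S)}$ yields~\eqref{main_est}, whose right-hand side $\to0$ as $n\to\infty$ under the hypotheses $r>2$, $\alpha\in(1/2,1)$.
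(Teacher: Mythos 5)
Your plan is correct and rests on the same machinery as the paper's proof: the semi-discrete invertibility of Lemma~\ref{step1}, the operator identity $(I+BA)^{-1}=I-B(I+AB)^{-1}A$, the superalgebraic kernel-approximation bounds \eqref{spec2}--\eqref{specstar}, the projection estimates \eqref{estGan1}--\eqref{estGan2}, and the H\"older-space Fredholm argument for the regularity bound $\|\Phi\|_{\Cv_2^{r,\alpha}}\le C\|F\|_{\Cv_2^{r,\alpha}}$ (which is exactly what the paper's Appendix on continuity norms supplies). Where you genuinely deviate is in the bookkeeping. For invertibility you first strip off the $n'$-discretization and then apply the $(I+BA)$ identity directly to the fully projected operator on $\underline{\mathbb{P}}_n$, whereas the paper perturbs the operators of Lemma~\ref{step1} by the $n'$-approximations to get \eqref{inv_est} for the right-projected operator $\Ima+\Mm^*_{n'}\underline{{\cal L}}_n\Mm_{n'}\underline{{\cal L}}_n+\cdots$ and only afterwards recovers $\Phi_n=\underline{{\cal L}}_n\Psi_n$ by left-multiplying \eqref{eqPhin} by $\underline{{\cal L}}_n$; the two routes are functionally equivalent, each paying a single factor $n^{1/2}$. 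For the error estimate the paper introduces two intermediates, $\Psi_n$ solving \eqref{eqPhin} and $\tilde\Psi_n$ solving the continuous equation with projected data $\underline{{\cal L}}_nF$ in \eqref{eqPsin}, and chains $\|\Psi_n-\tilde\Psi_n\|$, $\|\tilde\Psi_n-\Phi\|$, and the projection of $\Psi_n$; this forces the H\"older bound \eqref{PsinCalpha} on $\tilde\Psi_n$ in terms of $\|\underline{{\cal L}}_nF\|_{\Cv_2^{r,\alpha}}$. You instead run a direct stability--consistency argument at the exact solution, $\underline{{\cal L}}_n\Phi-\Phi_n=(A_n\vert_{\underline{\mathbb{P}}_n})^{-1}\underline{{\cal L}}_n(L_n\underline{{\cal L}}_n\Phi-\mathcal{K}\Phi)$, so the only H\"older norm needed is that of $\Phi$ itself, which comes straight from step (ii); this is slightly cleaner, sidesteps the discussion of $\|\underline{{\cal L}}_nF\|_{\Cv_2^{r,\alpha}}$, and if the factors are counted carefully it even yields a power no worse (indeed possibly better) than the stated $n^{5/2-r-\alpha}$, so \eqref{main_est} follows a fortiori. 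The individual consistency bounds you invoke are the same ones the paper proves (its (\ref{easy_part}), (\ref{easy_part2}), (\ref{f1}), (\ref{f2}), there evaluated at $\tilde\Psi_n$ rather than $\Phi$), so no new estimate is required; I see no gap in the plan.
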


\begin{proof}
From (\ref{estGan1}), we note that $\underline{{\cal L}}_n: \Cv_2(S) \to \Cv_2(S)$ is a bounded linear operator
and 
\bea
\|\underline{{\cal L}}_n \| \leq C n^{1/2}. \label{sqrt}
\eea
Combining this with~(\ref{spec2}), establishes
\bean
\| \underline{{\cal L}}_n(\Mm - \Mm_{n'} ) \underline{{\cal L}}_n\| \leq C n^{1-p},
\eean
where $C$ depends on $p$.
Then
\bea
\| \Mm^*\underline{{\cal L}}_n(\Mm - \Mm_{n'} ) \underline{{\cal L}}_n\| \leq C n^{1-p},
\label{starcombo1}
\eea
and, combining (\ref{spec2}) with (\ref{specstar}), 
\bea
\| (\Mm^* - \Mm^*_{n'} )\underline{{\cal L}}_n(\Mm - \Mm_{n'} ) 
\underline{{\cal L}}_n\| \leq C n^{1-p}.
\label{starcombo2}
\eea
Combining (\ref{starcombo1}) and  (\ref{starcombo2}) yields
\bea
\| (\Mm^* + \Mm^*_{n'} )\underline{{\cal L}}_n(\Mm - \Mm_{n'} ) 
\underline{{\cal L}}_n\| \leq C n^{1-p}.
\label{plus}
\eea
Next, we write
\bean
\lefteqn{\Mm^*_{n'} \underline{{\cal L}}_n \Mm \underline{{\cal L}}_n - \Mm^*  \underline{{\cal L}}_n 
\Mm_{n'} \underline{{\cal L}}_n } & & \\
& = &  (\Mm^*_{n'} - \Mm^*) \underline{{\cal L}}_n \Mm \underline{{\cal L}}_n 
- \Mm^* \underline{{\cal L}}_n (\Mm_{n'} - \Mm) \underline{{\cal L}}_n,
\eean
so, by another application of~(\ref{spec2})--(\ref{specstar}) and (\ref{sqrt}) we have
\bea
\| \Mm^*_{n'} \underline{{\cal L}}_n \Mm \underline{{\cal L}}_n - \Mm^*  \underline{{\cal L}}_n 
\Mm_{n'} \underline{{\cal L}}_n\| \leq C n^{1-p}. \label{mixed}
\eea
We now combine (\ref{plus}) with (\ref{mixed}) to obtain
\bea
\|  \Mm^* \underline{{\cal L}}_n \Mm \underline{{\cal L}}_n - \Mm^*_{n'} \underline{{\cal L}}_n 
\Mm_{n'}  \underline{{\cal L}}_n\| \leq C n^{1-p}. \label{almost}
\eea
Similarly,
\bea
\|  \Jm^* \underline{{\cal L}}_n \Jm \underline{{\cal L}}_n - \Jm^*_{n'} \underline{{\cal L}}_n 
\Jm_{n'}  \underline{{\cal L}}_n\| \leq C n^{1-p}.  \label{almost3}
\eea
Finally,
choosing $p>3/2$ and recalling \eqref{spec2}--\eqref{specstar}
we conclude using Lemma~\ref{step1} that
$\Ima +   \Mm^*_{n'}   \underline{{\cal L}}_n \Mm_{n'}  \underline{{\cal L}}_n
+  \Mm^*_{n'}   \underline{{\cal L}}_n + \Mm_{n'}  \underline{{\cal L}}_n 
 + \Jm^*_{n'}   \underline{{\cal L}}_n \Jm_{n'}  \underline{{\cal L}}_n$
is invertible for sufficiently large $n$  and for those $n$ thereupon,
\bea \label{inv_est}
\| (\Ima +   \Mm^*_{n'}   \underline{{\cal L}}_n \Mm_{n'}  \underline{{\cal L}}_n
+ \Mm^*_{n'}   \underline{{\cal L}}_n  + \Mm_{n'}  \underline{{\cal L}}_n 
 + \Jm^*_{n'}   \underline{{\cal L}}_n \Jm_{n'}  \underline{{\cal L}}_n)^{-1}
\| \leq C n^{1/2}.
\eea
Now let $F \in \Cv_2(S)$, and $\Psi_n$ solve
\bea
(\Ima + \Mm^*_{n'} \underline{{\cal L}}_n \Mm_{n'}  \underline{{\cal L}}_n +
\Mm^*_{n'} \underline{{\cal L}}_n + \Mm_{n'}  \underline{{\cal L}}_n
+ \Jm^*_{n'} \underline{{\cal L}}_n \Jm_{n'}  \underline{{\cal L}}_n)  \Psi_n = \underline{{\cal L}}_n F.
\label{eqPhin}
\eea
As $(\underline{{\cal L}}_n)^2 =\underline{{\cal L}}_n $,
left-multiplying equation~(\ref{eqPhin}) by $\underline{{\cal L}}_n$
shows
that $\underline{{\cal L}}_n \Psi_n$ is the solution to 
%$\underline{{\cal L}}_n  \Phi_n$ solves the same equation, so by uniqueness
%$\Phi_n= \underline{{\cal L}}_n  \Phi_n$, 
%and then
\bea
(\Ima + \underline{{\cal L}}_n \Mm^*_{n'} \underline{{\cal L}}_n \Mm_{n'}  \underline{{\cal L}}_n +
\underline{{\cal L}}_n \Mm^*_{n'} \underline{{\cal L}}_n +
\underline{{\cal L}}_n \Mm_{n'}  \underline{{\cal L}}_n
+ \underline{{\cal L}}_n \Jm^*_{n'} \underline{{\cal L}}_n \Jm_{n'}  \underline{{\cal L}}_n)  \Phi_n = \underline{{\cal L}}_n F. \label{eqPhin2}
\eea
We conclude that equation (\ref{discrete_eq}) is always solvable,
and the solution is unique because
$\underline{\mathbb{P}}_n$ is finite dimensional.

% part 2 **************************************************
To prove estimate (\ref{main_est}), let $\Phi_n$ be the unique solution to 
(\ref{discrete_eq}). According to the argument above, there is a unique 
$\Psi_n \in \Cv_2(S)$ such that 
$\Phi_n = \underline{{\cal L}}_n \Psi_n$ and $\Psi_n$ is the unique solution 
to (\ref{eqPhin}). Now define $\tilde{\Psi}_n$ to be the solution of
\bea
 (\Ima + \Mm^*\Mm + \Mm^* + \Mm + \Jm^*\Jm ) \tilde{\Psi}_n = \underline{{\cal L}}_n  F.
 \label{eqPsin}
\eea
{\us{As a byproduct of Theorem \ref{theorem:well-posedness2}}},
we have that $\tilde{\Psi}_n  \in \Cv_2^{r, \alpha}$
and
\bea
 \|  \tilde{\Psi}_n \|_{C^{r, \alpha}} \leq C   \|\underline{{\cal L}}_n F\|_{\Cv_2^{r, \alpha}}.
\label{PsinCalpha}
\eea
%It follows by (\ref{Cpest}) that 
%\bean
% \| (I - \underline{{\cal L}}_n ) \tilde{\Psi}_n \|_{C^{2r-4, \alpha}} \leq C  n^{1/2 - 2r - \alpha}  \| F\|_{C^{r, \alpha}} \label{fast}
%\eean
%thus
Now, using (\ref{estGan2}) and (\ref{spec2}),
\bea
 \| (\Mm_{n'}  \underline{{\cal L}}_n - \Mm) \tilde{\Psi}_n \|
& \leq & \| \Mm (I -\underline{{\cal L}}_n )\tilde{\Psi}_n \|
+ \| (\Mm_{n'}   - \Mm) \underline{{\cal L}}_n \tilde{\Psi}_n \| \no \\
& \leq & C n^{1/2-r-\alpha} \|\tilde{\Psi}_n  \|_{\Cv_2^{r,\alpha}} .
\label{init}
\eea
Thus,
\bean
\| (\underline{{\cal L}}_n \Mm_{n'}  \underline{{\cal L}}_n -
\underline{{\cal L}}_n \Mm) \tilde{\Psi}_n \| \leq C  
n^{1-r-\alpha} \|\tilde{\Psi}_n  \|_{\Cv_2^{r,\alpha}},
\eean
and as  $\|  (\Ima -\underline{{\cal L}}_n ) \Mm\tilde{\Psi}_n \| \leq C  
n^{1/2-r-\alpha} \|\tilde{\Psi}_n  \|_{\Cv_2^{r,\alpha}}$,
we conclude that
\bea   \label{easy_part}
\| (\underline{{\cal L}}_n \Mm_{n'}  \underline{{\cal L}}_n -
\Mm) \tilde{\Psi}_n \| \leq C  
n^{1-r-\alpha} \|\tilde{\Psi}_n  \|_{\Cv_2^{r,\alpha}}.
\eea
Similarly,
\bea   \label{easy_part2}
\| (\underline{{\cal L}}_n \Mm^*_{n'}  \underline{{\cal L}}_n -
\Mm^*) \tilde{\Psi}_n \| \leq C  
n^{1-r-\alpha} \|\tilde{\Psi}_n  \|_{\Cv_2^{r,\alpha}}.
\eea
%where we have used (\ref{spec2}) and (\ref{specstar}).
From (\ref{init}),
\bea
\| \Mm^*(\Mm_{n'}  \underline{{\cal L}}_n - \Mm) \tilde{\Psi}_n \|_{\Cv_2^\alpha}
\leq C n^{1/2-r - \alpha} \|\tilde{\Psi}_n  \|_{\Cv_2^{r,\alpha}}  \label{KstarKnprimeL} ,
\eea 
and
\bean
\| \Mm^*(I -\underline{{\cal L}}_n ) (\Mm_{n'}  \underline{{\cal L}}_n - \Mm) \tilde{\Psi}_n \|
& \leq & C n^{1-r -  \alpha} \|\tilde{\Psi}_n  \|_{\Cv_2^{r,\alpha}}, \\
\| (\Mm^*_{n'}   - \Mm^*) \underline{{\cal L}}_n 
(\Mm_{n'}  \underline{{\cal L}}_n - \Mm) \tilde{\Psi}_n  \|_{\infty}
& \leq & C n^{1-r - \alpha} \|\tilde{\Psi}_n  \|_{\Cv_2^{r,\alpha}},
\eean
where in the last line we used~(\ref{spec2}).
Thus, using~(\ref{specstar}),
\bea
\| (\Mm^*_{n'} \underline{{\cal L}}_n   - \Mm^*) 
(\Mm_{n'}  \underline{{\cal L}}_n - \Mm) \tilde{\Psi}_n  \|
\leq C n^{1-r - \alpha} \|\tilde{\Psi}_n  \|_{\Cv_2^{r,\alpha}}.
\label{KnprimeLn_etc}
\eea
Now combining~(\ref{KstarKnprimeL}) with~(\ref{KnprimeLn_etc}) yields
\bea
\| (\Mm^*_{n'} \underline{{\cal L}}_n   + \Mm^*) 
(\Mm_{n'}  \underline{{\cal L}}_n - \Mm) \tilde{\Psi}_n  \|
\leq C n^{1-r - \alpha} \|\tilde{\Psi}_n  \|_{\Cv_2^{r,\alpha}}. \label{mixedagain}
\eea
To evaluate $( \Mm^* \Mm_{n'}  \underline{{\cal L}}_n -  
\Mm^*_{n'} \underline{{\cal L}}_n\Mm )\tilde{\Psi}_n$,
write
\bea
 \|( \Mm^* \Mm_{n'}  \underline{{\cal L}}_n -  \Mm^*_{n'} \underline{{\cal L}}_n
\Mm )\tilde{\Psi}_n\|
\leq
 \| \Mm^*( \Mm_{n'} - \Mm)  \underline{{\cal L}}_n \tilde{\Psi}_n\| \no
+ \| \Mm^*(I - \underline{{\cal L}}_n)\Mm \underline{{\cal L}}_n\tilde{\Psi}_n\|\\
+ \| \Mm^* \underline{{\cal L}}_n \Mm (-\Ima + \underline{{\cal L}}_n) \tilde{\Psi}_n\|+
 \| (\Mm^*- \Mm^*_{n'} ) \underline{{\cal L}}_n\Mm \tilde{\Psi}_n \| \no \\
\label{long}
\eea
The first, third and fourth terms in the right hand side
of~(\ref{long})
are clearly bounded by $C n^{1-r - \alpha} \|\tilde{\Psi}_n  \|_{\Cv_2^{r,\alpha}}$.
% or $C n^{1/2-r - \alpha} \|\tilde{\Psi}_n  \|_{C^{r,\alpha}}$. 
%Choosing $\alpha \in (1/2,1)$, 
%this is less than $C n^{1/2-r - \alpha} \|\tilde{\Psi}_n  \|_{\Cv_2^{r,\alpha}}$.
To tackle the second term in (\ref{long}), we write
\bean
\| \Mm^*(I - \underline{{\cal L}}_n)\Mm \underline{{\cal L}}_n\tilde{\Psi}_n\| \leq
\| \Mm^*(I - \underline{{\cal L}}_n)\Mm \tilde{\Psi}_n\| +
\| \Mm^*(I - \underline{{\cal L}}_n)\Mm (\Ima - \underline{{\cal L}}_n)\tilde{\Psi}_n\|,
\eean
which is again bounded by $C n^{1-r - \alpha} \|\tilde{\Psi}_n  \|$.
Combining~(\ref{mixedagain}) with~(\ref{long}) 
it follows that
\bea
\| 
(\Mm^*_{n'} \underline{{\cal L}}_n \Mm_{n'}  \underline{{\cal L}}_n -
 \Mm^* \Mm) \tilde{\Psi}_n  \|
\leq C n^{1-r - \alpha} \|\tilde{\Psi}_n  \|_{\Cv_2^{r,\alpha}} \label{f1}.
\eea
Similarly, we can show
\bea
\| 
(\Jm^*_{n'} \underline{{\cal L}}_n \Jm_{n'}  
\underline{{\cal L}}_n - \Jm^* \Jm) \tilde{\Psi}_n  \|
\leq  C n^{1-r - \alpha} \|\tilde{\Psi}_n  \|_{\Cv_2^{r,\alpha}}\label{f2}.
\eea
Recalling that $\Psi_n$ satisfies (\ref{eqPhin}) 
and that $\tilde{\Psi}_n$ satisfies (\ref{eqPsin}),
it follows that (using (\ref{easy_part}), (\ref{easy_part2}), (\ref{f1}), (\ref{f2}))
\begin{align*}
  \MoveEqLeft
\| (\Ima + \Mm^*_{n'} \underline{{\cal L}}_n \Mm_{n'}  \underline{{\cal L}}_n 
+ \Mm^*_{n'} \underline{{\cal L}}_n + \Mm_{n'}  \underline{{\cal L}}_n 
+ \Jm^*_{n'} \underline{{\cal L}}_n \Jm_{n'}  
\underline{{\cal L}}_n)  (\Psi_n - \tilde{\Psi}_n ) \| &  \\
& \leq  C n^{1-r - \alpha} \|\tilde{\Psi}_n \|_{\Cv_2^{r,\alpha}}.
\end{align*}
Now, using~(\ref{inv_est}) we obtain 
\bean
\| \Psi_n - \tilde{\Psi}_n \|
\leq  C n^{3/2-r - \alpha} \|\tilde{\Psi}_n \|_{\Cv_2^{r,\alpha}}
. \eean
It follows from equations~(\ref{Phieq}) and~(\ref{eqPsin})
that 
$$ \| \Phi - \tilde{\Psi}_n  \| \leq C \| (\Ima - \underline{{\cal L}}_n  ) F \| 
\leq C n^{1/2-r - \alpha} \| F \|_{\Cv_2^{r,\alpha}}. $$
It then follows from estimate~(\ref{PsinCalpha})
that
\bea
\| \Psi_n - \Phi \|
\leq  C n^{2-r - \alpha} \| F \|_{\Cv_2^{r,\alpha}}. \label{PsinPhi}
\eea
Finally we recall that $\Phi_n = \underline{{\cal L}}_n \Psi_n $
and we use (\ref{PsinPhi}) and (\ref{estGan1}) to write
\bean
\| \Phi_n - \Phi \| \leq \| \underline{{\cal L}}_n \Psi_n - \underline{{\cal L}}_n\Phi \|+
\|(\Ima- \underline{{\cal L}}_n) \Phi) \| \leq  C n^{5/2-r - \alpha} \| F \|_{\Cv_2^{r,\alpha}} ,
\eean
so (\ref{main_est}) is proved.
\end{proof}

\section{Numerical experiments}
\label{sec:numerics}

To demonstrate our algorithm we make use of a discretization that
preserves adjointness. To that end,
following details in~\cite{dielectric2}
we write the component
integral operators of $\Mm$ and $\Jm$, defined in~\eqref{eq:Mm}
and~\eqref{J-op} respectively, in the form
\begin{equation}
  \label{eq:a}
  \f(\x) = \int_{\surface} m(\x,\y) \w(\y) \; ds(\y),
\end{equation}
where the kernel $m(\x,\y)$ is a $3 \times 3$ matrix and $\w(\y),\f(\x)$
are column $3$-vectors.
As discussed earlier, we assume that the surface $\surface$ can be parametrized
by a diffeomorphic map  $\q: \sphere \to \surface$,
\begin{equation}
  \label{eq:qmap}
  \x = \q(\xh), \qquad \x \in \surface, \quad\ \xh \in \sphere.
\end{equation}
%where $\sphere$ denotes the unit sphere.
Our discretization of~\eqref{eq:a} uses the 
transform back to the unit sphere $\sphere$,
\begin{equation}
  \label{eq:b}
  \F(\xh) = \int_{\sphere} M(\xh,\yh) \W(\yh) \; J(\yh) \; ds(\yh),
\end{equation}
where $J$ is the Jacobian of $\q$ and
\begin{align}
  \label{eq:pullback}
  \begin{split}
  M(\xh,\yh) & = J(\xh)^{1/2} \; m(\q(\xh),\q(\yh)) \; J(\yh)^{1/2},\\
  \W(\xh) & = J(\xh)^{1/2} \; \w(\q(\xh)), \\
  \F(\xh) & = J(\xh)^{1/2} \; \f(\q(\xh)).
  \end{split}
\end{align}
The far-field $\E_\infty$ is obtained similarly using surface integral
representations~\cite[Section~3]{dielectric2}.
{\revA{
More precisely,
we use $\Phi_n$ and the exact surface integral representation
of the  electric far-field $\E^\infty$  in~\cite[Equation~(6.2)]{ganesh2014all}
to compute the spectrally accurate approximation  $\E^\infty_n$.
Since  the far-field kernels are smooth, following arguments in the final part of the proof in~\cite[Theorem~3]{gh:first}, $\E^\infty_n$ retains the same
 convergence properties with respect to $n$ as $\Phi_n$. }}
The discretization described in Section~\ref{sec:spectral},
which uses a basis of spherical harmonics
defined on the spherical reference domain $\sphere$,
then preserves adjointness of the transformed surface integral operators,
so that the matrices
representing $\Mm^*$ and $\Jm^*$ are readily computed as the conjugate
transpose of the matrices representing $\Mm$ and $\Jm$.
In the remainder of this section we use the subscript $n$ to denote
approximations computed using the fully-discrete Galerkin scheme
with spherical harmonics of degree not greater than $n$.
Thus, for example, $\E^{\infty}_{n}$ is our approximation to $\E^\infty$.

To demonstrate our algorithm
we present numerical results for scattering of a plane wave
\begin{equation}
  \label{eq:plane-wave}
  \E_\inc(\x) = \p \, e^{i k \, \x \cdot \d},
\end{equation}
by a  sphere,  spheroids and a non-convex Chebyshev particle, which are
common test geometries in the literature~\cite{mish94,kahnert,rother:cheby}
and of interest in applications such as atmospheric science. (The sphere case facilitates
comparison with the analytical Mie series solution~\cite{wiscombe:mie}.)
Here $\d$ is the incident wave direction, and $\p$ satisfying $\d \cdot \p = 0$
is the incident polarization.
The spheroid with aspect ratio $\rho$ is parametrized by
\begin{equation}
  \label{eq:spheroid}
  \q(\xh) = \left( \begin{array}{c}  \xh_1 / \rho\\ \xh_2 / \rho \\ \xh_3
    \end{array} \right), \qquad \xh \in \sphere,
\end{equation}
and is oriented so that its major axis aligns with the $z$-axis.
The Chebyshev particle is parametrized by~\cite{rother:cheby}
\begin{equation}
  \label{eq:chebyshev}
  \q(\xh) = r(\xh) \, \xh,
  \qquad r(\x) = \frac{1}{2} + \frac{1}{40} \cos (5 \cos^{-1} \xh_3).
  \qquad \xh \in \sphere,
\end{equation}
This particle is visualized in
Figure~\ref{fig:ext-hive8-mishchenko2}.
In our experiments we use dielectric bodies with
real refractive index $\nu = 1.584$, which is the refractive index
of polycarbonate, and based on numerical results in~\cite{mish94},
absorbing bodies with
complex refractive indices
$\nu = 1.5+0.02i, \ 1.0925 + 0.248i$.
Thus we demonstrate our algorithm for non-absorbing, weakly-absorbing
and strongly-absorbing particles.
The electromagnetic size of the particles is
$s = d/\lambda$ where $d$ denotes the diameter of the particle,
and the commonly used size parameter is therefore $x= \pi s$.

To simplify the exposition, except where otherwise stated,
we compute the near field $\E$ and the
far field $\E^\infty_n$ in the $xz$-plane for incident waves with
direction $\d = (0,0,1)^T$.
Thus the scattering plane is the $xz$-plane and,
following~\cite[Page~245]{rother},~\cite[Page~387]{hulst:light},
and~\cite[Figure~4.5, Page~204]{kristensson}
we define V-polarization and H-polarization as being
respectively perpendicular and parallel to the scattering plane.
In particular,
for a receiver in the $xz$-plane
in the direction $\xh = (\sin \theta,0,\cos \theta)^T$,
we define the vertical and horizontal polarization
directions respectively to be
$\e_V = \e_\phi = (0,1,0)^T$
and
$\e_H = \e_\theta = (\cos \theta,0,-\sin \theta)^T$.
The radar cross section of the scatterer, measured in the scattering plane,
is then
\begin{equation}
  \label{eq:rcs}
  \sigma_{\mathrm{X} \mathrm{X}}(\theta) = 10 \log_{10} 4 \pi |\e_\mathrm{X}(\theta) \cdot \E_\infty(\theta)|^2,
\end{equation}
where $\mathrm{X}=\mathrm{H}$ or $\mathrm{X}=\mathrm{V}$ denotes the receiver polarization, and the
incident wave polarization is chosen (H or V)
to match the receiver polarization.
The scattering plane and polarization are visualized in
Figure~\ref{fig:scattering-plane}.

\begin{figure}
  \centering
  \begin{tikzpicture}[scale=0.6]
    \draw (1.24,-0.00) -- (1.24,0.08);
\draw (1.24,0.08) -- (1.23,0.16);
\draw (1.23,0.16) -- (1.22,0.24);
\draw (1.22,0.24) -- (1.20,0.31);
\draw (1.20,0.31) -- (1.18,0.39);
\draw (1.18,0.39) -- (1.15,0.46);
\draw (1.15,0.46) -- (1.12,0.53);
\draw (1.12,0.53) -- (1.09,0.60);
\draw (1.09,0.60) -- (1.05,0.67);
\draw (1.05,0.67) -- (1.00,0.74);
\draw (1.00,0.74) -- (0.95,0.80);
\draw (0.95,0.80) -- (0.90,0.86);
\draw (0.90,0.86) -- (0.84,0.91);
\draw (0.84,0.91) -- (0.78,0.96);
\draw (0.78,0.96) -- (0.72,1.01);
\draw (0.72,1.01) -- (0.65,1.06);
\draw (0.65,1.06) -- (0.59,1.09);
\draw (0.59,1.09) -- (0.52,1.13);
\draw (0.52,1.13) -- (0.44,1.16);
\draw (0.44,1.16) -- (0.37,1.19);
\draw (0.37,1.19) -- (0.29,1.21);
\draw (0.29,1.21) -- (0.22,1.22);
\draw (0.22,1.22) -- (0.14,1.23);
\draw (0.14,1.23) -- (0.06,1.24);
\draw (0.06,1.24) -- (-0.02,1.24);
\draw (-0.02,1.24) -- (-0.10,1.24);
\draw (-0.10,1.24) -- (-0.18,1.23);
\draw (-0.18,1.23) -- (-0.25,1.22);
\draw (-0.25,1.22) -- (-0.33,1.20);
\draw (-0.33,1.20) -- (-0.41,1.17);
\draw (-0.41,1.17) -- (-0.48,1.15);
\draw (-0.48,1.15) -- (-0.55,1.11);
\draw (-0.55,1.11) -- (-0.62,1.08);
\draw (-0.62,1.08) -- (-0.69,1.03);
\draw (-0.69,1.03) -- (-0.75,0.99);
\draw (-0.75,0.99) -- (-0.81,0.94);
\draw (-0.81,0.94) -- (-0.87,0.89);
\draw (-0.87,0.89) -- (-0.93,0.83);
\draw (-0.93,0.83) -- (-0.98,0.77);
\draw (-0.98,0.77) -- (-1.02,0.70);
\draw (-1.02,0.70) -- (-1.07,0.64);
\draw (-1.07,0.64) -- (-1.10,0.57);
\draw (-1.10,0.57) -- (-1.14,0.50);
\draw (-1.14,0.50) -- (-1.17,0.42);
\draw (-1.17,0.42) -- (-1.19,0.35);
\draw (-1.19,0.35) -- (-1.21,0.27);
\draw (-1.21,0.27) -- (-1.23,0.20);
\draw (-1.23,0.20) -- (-1.24,0.12);
\draw (-1.24,0.12) -- (-1.24,0.04);
\draw (-1.24,0.04) -- (-1.24,-0.04);
\draw (-1.24,-0.04) -- (-1.24,-0.12);
\draw (-1.24,-0.12) -- (-1.23,-0.20);
\draw (-1.23,-0.20) -- (-1.21,-0.27);
\draw (-1.21,-0.27) -- (-1.19,-0.35);
\draw (-1.19,-0.35) -- (-1.17,-0.42);
\draw (-1.17,-0.42) -- (-1.14,-0.50);
\draw (-1.14,-0.50) -- (-1.10,-0.57);
\draw (-1.10,-0.57) -- (-1.07,-0.64);
\draw (-1.07,-0.64) -- (-1.02,-0.70);
\draw (-1.02,-0.70) -- (-0.98,-0.77);
\draw (-0.98,-0.77) -- (-0.93,-0.83);
\draw (-0.93,-0.83) -- (-0.87,-0.89);
\draw (-0.87,-0.89) -- (-0.81,-0.94);
\draw (-0.81,-0.94) -- (-0.75,-0.99);
\draw (-0.75,-0.99) -- (-0.69,-1.03);
\draw (-0.69,-1.03) -- (-0.62,-1.08);
\draw (-0.62,-1.08) -- (-0.55,-1.11);
\draw (-0.55,-1.11) -- (-0.48,-1.15);
\draw (-0.48,-1.15) -- (-0.41,-1.17);
\draw (-0.41,-1.17) -- (-0.33,-1.20);
\draw (-0.33,-1.20) -- (-0.25,-1.22);
\draw (-0.25,-1.22) -- (-0.18,-1.23);
\draw (-0.18,-1.23) -- (-0.10,-1.24);
\draw (-0.10,-1.24) -- (-0.02,-1.24);
\draw (-0.02,-1.24) -- (0.06,-1.24);
\draw (0.06,-1.24) -- (0.14,-1.23);
\draw (0.14,-1.23) -- (0.22,-1.22);
\draw (0.22,-1.22) -- (0.29,-1.21);
\draw (0.29,-1.21) -- (0.37,-1.19);
\draw (0.37,-1.19) -- (0.44,-1.16);
\draw (0.44,-1.16) -- (0.52,-1.13);
\draw (0.52,-1.13) -- (0.59,-1.09);
\draw (0.59,-1.09) -- (0.65,-1.06);
\draw (0.65,-1.06) -- (0.72,-1.01);
\draw (0.72,-1.01) -- (0.78,-0.96);
\draw (0.78,-0.96) -- (0.84,-0.91);
\draw (0.84,-0.91) -- (0.90,-0.86);
\draw (0.90,-0.86) -- (0.95,-0.80);
\draw (0.95,-0.80) -- (1.00,-0.74);
\draw (1.00,-0.74) -- (1.05,-0.67);
\draw (1.05,-0.67) -- (1.09,-0.60);
\draw (1.09,-0.60) -- (1.12,-0.53);
\draw (1.12,-0.53) -- (1.15,-0.46);
\draw (1.15,-0.46) -- (1.18,-0.39);
\draw (1.18,-0.39) -- (1.20,-0.31);
\draw (1.20,-0.31) -- (1.22,-0.24);
\draw (1.22,-0.24) -- (1.23,-0.16);
\draw (1.23,-0.16) -- (1.24,-0.08);
\draw (1.24,-0.08) -- (1.24,-0.00);
\draw (0.59,0.48) -- (0.51,0.49);
\draw (0.51,0.49) -- (0.43,0.50);
\draw (0.43,0.50) -- (0.35,0.51);
\draw (0.35,0.51) -- (0.27,0.52);
\draw (0.27,0.52) -- (0.19,0.53);
\draw (0.19,0.53) -- (0.11,0.53);
\draw (0.11,0.53) -- (0.03,0.53);
\draw (0.03,0.53) -- (-0.06,0.53);
\draw (-0.06,0.53) -- (-0.14,0.53);
\draw (-0.14,0.53) -- (-0.22,0.53);
\draw (-0.22,0.53) -- (-0.30,0.52);
\draw (-0.30,0.52) -- (-0.38,0.51);
\draw (-0.38,0.51) -- (-0.46,0.50);
\draw (-0.46,0.50) -- (-0.54,0.49);
\draw (-0.54,0.49) -- (-0.62,0.47);
\draw (-0.62,0.47) -- (-0.69,0.46);
\draw (-0.69,0.46) -- (-0.76,0.44);
\draw (-0.76,0.44) -- (-0.83,0.42);
\draw (-0.83,0.42) -- (-0.89,0.40);
\draw (-0.89,0.40) -- (-0.95,0.37);
\draw (-0.95,0.37) -- (-1.01,0.35);
\draw (-1.01,0.35) -- (-1.07,0.32);
\draw (-1.07,0.32) -- (-1.12,0.30);
\draw (-1.12,0.30) -- (-1.16,0.27);
\draw (-1.16,0.27) -- (-1.21,0.24);
\draw (-1.21,0.24) -- (-1.24,0.21);
\draw (-1.24,0.21) -- (-1.27,0.17);
\draw (-1.27,0.17) -- (-1.30,0.14);
\draw (-1.30,0.14) -- (-1.32,0.11);
\draw (-1.32,0.11) -- (-1.34,0.07);
\draw (-1.34,0.07) -- (-1.35,0.04);
\draw (-1.35,0.04) -- (-1.36,0.00);
\draw (-1.36,0.00) -- (-1.36,-0.03);
\draw (-1.36,-0.03) -- (-1.36,-0.07);
\draw (-1.36,-0.07) -- (-1.35,-0.10);
\draw (-1.35,-0.10) -- (-1.33,-0.14);
\draw (-1.33,-0.14) -- (-1.31,-0.17);
\draw (-1.31,-0.17) -- (-1.28,-0.21);
\draw (-1.28,-0.21) -- (-1.25,-0.24);
\draw (-1.25,-0.24) -- (-1.22,-0.27);
\draw (-1.22,-0.27) -- (-1.17,-0.31);
\draw (-1.17,-0.31) -- (-1.13,-0.34);
\draw (-1.13,-0.34) -- (-1.07,-0.37);
\draw (-1.07,-0.37) -- (-1.02,-0.39);
\draw (-1.02,-0.39) -- (-0.96,-0.42);
\draw (-0.96,-0.42) -- (-0.89,-0.44);
\draw (-0.89,-0.44) -- (-0.82,-0.47);
\draw (-0.82,-0.47) -- (-0.75,-0.49);
\draw (-0.75,-0.49) -- (-0.67,-0.51);
\draw (-0.67,-0.51) -- (-0.59,-0.52);
\draw (-0.59,-0.52) -- (-0.51,-0.54);
\draw (-0.51,-0.54) -- (-0.43,-0.55);
\draw (-0.43,-0.55) -- (-0.34,-0.56);
\draw (-0.34,-0.56) -- (-0.25,-0.57);
\draw (-0.25,-0.57) -- (-0.16,-0.58);
\draw (-0.16,-0.58) -- (-0.07,-0.58);
\draw (-0.07,-0.58) -- (0.02,-0.58);
\draw (0.02,-0.58) -- (0.11,-0.58);
\draw (0.11,-0.58) -- (0.20,-0.57);
\draw (0.20,-0.57) -- (0.29,-0.57);
\draw (0.29,-0.57) -- (0.37,-0.56);
\draw (0.37,-0.56) -- (0.46,-0.55);
\draw (0.46,-0.55) -- (0.54,-0.53);
\draw (0.54,-0.53) -- (0.62,-0.52);
\draw (0.62,-0.52) -- (0.70,-0.50);
\draw (0.70,-0.50) -- (0.78,-0.48);
\draw (0.78,-0.48) -- (0.85,-0.46);
\draw (0.85,-0.46) -- (0.92,-0.43);
\draw (0.92,-0.43) -- (0.98,-0.41);
\draw (0.98,-0.41) -- (1.04,-0.38);
\draw (1.04,-0.38) -- (1.09,-0.35);
\draw (1.09,-0.35) -- (1.15,-0.32);
\draw (1.15,-0.32) -- (1.19,-0.29);
\draw (1.19,-0.29) -- (1.23,-0.26);
\draw (1.23,-0.26) -- (1.27,-0.23);
\draw (1.27,-0.23) -- (1.30,-0.19);
\draw (1.30,-0.19) -- (1.32,-0.16);
\draw (1.32,-0.16) -- (1.34,-0.13);
\draw (1.34,-0.13) -- (1.35,-0.09);
\draw (1.35,-0.09) -- (1.36,-0.05);
\draw (1.36,-0.05) -- (1.36,-0.02);
\draw (1.36,-0.02) -- (1.36,0.02);
\draw (1.36,0.02) -- (1.35,0.05);
\draw (1.35,0.05) -- (1.34,0.09);
\draw (1.34,0.09) -- (1.32,0.12);
\draw (1.32,0.12) -- (1.29,0.15);
\draw (1.29,0.15) -- (1.26,0.19);
\draw (1.26,0.19) -- (1.23,0.22);
\draw (1.23,0.22) -- (1.19,0.25);
\draw (1.19,0.25) -- (1.15,0.28);
\draw (1.15,0.28) -- (1.10,0.31);
\draw (1.10,0.31) -- (1.05,0.33);
\draw (1.05,0.33) -- (0.99,0.36);
\draw (0.99,0.36) -- (0.93,0.38);
\draw (0.93,0.38) -- (0.87,0.41);
\draw (0.87,0.41) -- (0.80,0.43);
\draw (0.80,0.43) -- (0.73,0.45);
\draw (0.73,0.45) -- (0.66,0.46);
\draw (0.66,0.46) -- (0.59,0.48);
\draw (4.21,-5.16) -- (8.36,1.14);
\draw (8.36,1.14) -- (-2.38,2.92);
\draw (-2.38,2.92) -- (-10.06,-1.37);
\draw (-10.06,-1.37) -- (4.21,-5.16);
\draw (0.00,-0.00) -- (-6.32,-5.16);
\draw[->] (-6.32,-5.16) -- (-5.21,-4.25);
\draw[->] (-6.32,-5.16) -- (-4.76,-5.65);
\draw[->] (-6.32,-5.16) -- (-6.49,-3.53);
\draw (-6.49,-3.53) node[anchor=south] {$\e_V(\pi)$};
\draw (-4.76,-5.65) node[anchor=west] {$\e_H(\pi)$};
\draw (-5.21,-4.25) node[anchor=north] {$\boldsymbol{d}$};
\draw (6.81,3.26) node[anchor=south] {$\e_V(\theta)$};
\draw (5.83,2.47) node[anchor=east] {$\e_H(\theta)$};
\draw (7.43,2.41) node[anchor=west] {$\xh(\theta)$};
\draw (0.00,-0.00) -- (6.70,2.17);
\draw[->] (6.70,2.17) -- (6.81,3.26);
\draw[->] (6.70,2.17) -- (5.83,2.47);
\draw[->] (6.70,2.17) -- (7.43,2.41);
\fill (0.00,-0.00) circle[radius=1.36];
\draw (0.00,-0.00) -- (2.57,2.10);
\draw (3.65,1.18) -- (3.62,1.19);
\draw (3.62,1.19) -- (3.59,1.21);
\draw (3.59,1.21) -- (3.56,1.22);
\draw (3.56,1.22) -- (3.53,1.23);
\draw (3.53,1.23) -- (3.50,1.24);
\draw (3.50,1.24) -- (3.47,1.25);
\draw (3.47,1.25) -- (3.44,1.26);
\draw (3.44,1.26) -- (3.40,1.27);
\draw (3.40,1.27) -- (3.37,1.29);
\draw (3.37,1.29) -- (3.34,1.30);
\draw (3.34,1.30) -- (3.31,1.31);
\draw (3.31,1.31) -- (3.28,1.32);
\draw (3.28,1.32) -- (3.25,1.33);
\draw (3.25,1.33) -- (3.22,1.34);
\draw (3.22,1.34) -- (3.18,1.35);
\draw (3.18,1.35) -- (3.15,1.36);
\draw (3.15,1.36) -- (3.12,1.37);
\draw (3.12,1.37) -- (3.09,1.38);
\draw (3.09,1.38) -- (3.05,1.39);
\draw (3.05,1.39) -- (3.02,1.40);
\draw (3.02,1.40) -- (2.99,1.41);
\draw (2.99,1.41) -- (2.95,1.42);
\draw (2.95,1.42) -- (2.92,1.43);
\draw (2.92,1.43) -- (2.89,1.44);
\draw (2.89,1.44) -- (2.85,1.45);
\draw (2.85,1.45) -- (2.82,1.46);
\draw (2.82,1.46) -- (2.79,1.47);
\draw (2.79,1.47) -- (2.75,1.48);
\draw (2.75,1.48) -- (2.72,1.48);
\draw (2.72,1.48) -- (2.68,1.49);
\draw (2.68,1.49) -- (2.65,1.50);
\draw (2.65,1.50) -- (2.61,1.51);
\draw (2.61,1.51) -- (2.58,1.52);
\draw (2.58,1.52) -- (2.54,1.53);
\draw (2.54,1.53) -- (2.51,1.53);
\draw (2.51,1.53) -- (2.47,1.54);
\draw (2.47,1.54) -- (2.44,1.55);
\draw (2.44,1.55) -- (2.40,1.56);
\draw (2.40,1.56) -- (2.37,1.57);
\draw (2.37,1.57) -- (2.33,1.57);
\draw (2.33,1.57) -- (2.29,1.58);
\draw (2.29,1.58) -- (2.26,1.59);
\draw (2.26,1.59) -- (2.22,1.60);
\draw (2.22,1.60) -- (2.19,1.60);
\draw (2.19,1.60) -- (2.15,1.61);
\draw (2.15,1.61) -- (2.11,1.62);
\draw (2.11,1.62) -- (2.08,1.62);
\draw (2.08,1.62) -- (2.04,1.63);
\draw (2.04,1.63) -- (2.00,1.64);
\draw (2.21,1.11) node {$\theta$};
  \end{tikzpicture}
  \caption{\label{fig:scattering-plane}
    Schematic showing vertical (V) and horizontal (H) polarization
    with respect to the scattering plane
    for an incident wave with direction $\d$ and a receiver in the direction
    $\xh(\theta)$.}
\end{figure}
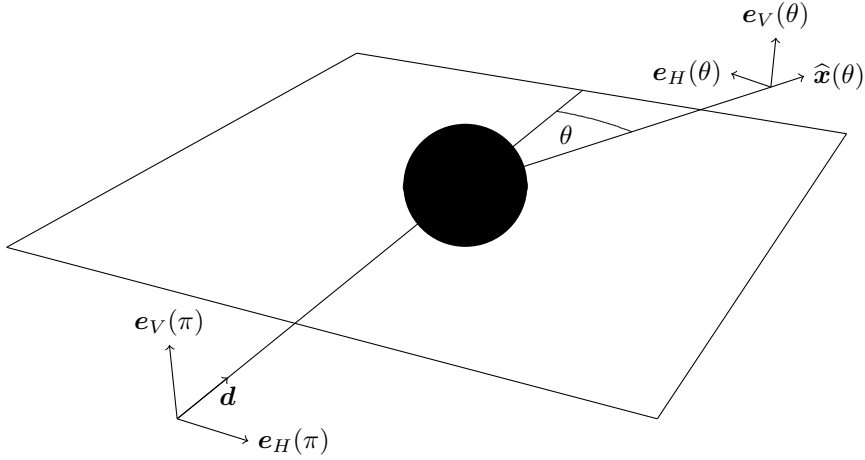

\begin{figure}
  \centering
  \includegraphics[width=10cm]{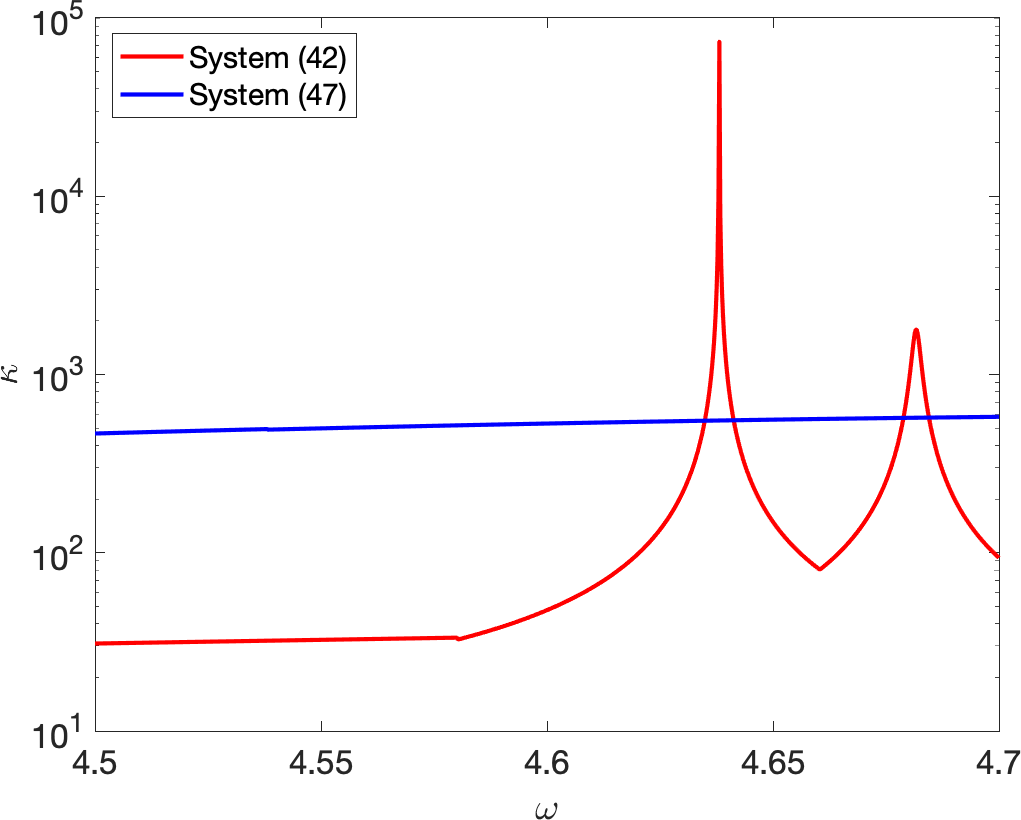}
  \caption{\label{fig:cond}
    Condition number $\kappa$ of the matrix in systems~\eqref{eq:strawman}
    (without stabilization) and~\eqref{eq:SIE} (with stabilization)
    for a dielectric unit sphere with
    refractive index $\nu = 1.303324617806156$ and
    frequency $\omega$.
    }
\end{figure}

In corroboration of the robust mathematical analysis
in Section~\ref{sec:J}  proving all-frequency stability and well-posedness of the continuous system, we begin by numerically demonstrating 
the effectiveness of the
fully-discrete counterpart of our new formulation~\eqref{eq:SIE}, governed by second-kind self-adjoint operator $I + \widetilde{\Mm}$, 
compared with the related system~\eqref{eq:strawman}, governed by  $I + \Mm$ (both without further external constraints).
To this end, in Figure~\ref{fig:cond} we plot
the 1-norm condition number of
the matrices representing both operators
(obtained using the fully-discrete representation described above)
for a unit sphere dielectric scatterer
with refractive index $\nu = 1.303324617806156$,
(for which $I + \Mm$
has an eigenfrequency $\omega = 4.638138138138138$).
The plot  was obtained using $1000$ frequencies $\omega$, which involves solving the
Maxwell equations~\eqref{eq:reduced-maxwell} more than $1000$ times
(with fixed dielectric constants and varying incident wave frequencies). 
  The condition number is estimated in both cases using~\cite{higham}.

To demonstrate the correctness of our fully-discrete algorithm for the all-frequency stable system~\eqref{eq:SIE},
\revC{in
  Tables~\ref{table:sph0p000001}--\ref{table:sph8}} we tabulate
the relative error
\begin{equation}
  \label{eq:err-mie}
  \mathrm{ERR}_{\mathrm{Mie},n} =
  \frac{\| \E_{\infty,n}(\theta) - \E_{\infty,\mathrm{ref}}(\theta) \|_\infty}{\| \E_{\infty,\mathrm{ref}}(\theta) \|_\infty}
\end{equation}
for scattering of H-polarized incident plane waves
by the unit sphere.
The reference solution $\E_{\infty,\mathrm{ref}}$ is computed to
high precision using
Wiscombe's Mie series code~\cite{wiscombe:mie}.
In practice, the maximum norms in~\eqref{eq:err-mie} are approximated discretely
using more than 1200 evenly spaced points $\theta \in [0,2\pi]$.

For non-spherical scatterers we demonstrate the correctness of our
algorithm using the reciprocity relation~\cite[Theorem~6.30]{colton:inverse}.
In particular, in Tables~\ref{table:ell:1}--\ref{table:hive8}
we tabulate the relative error
\begin{equation}
  \label{eq:err-rp}
  \mathrm{ERR}_{\mathrm{RP},n} =
  \frac{\| \e_H(\theta) \cdot \E_{\infty,n}(\theta;\theta') - \e_H(\theta') \cdot \E_{\infty,n}(\theta';\theta) \|_\infty}{\|  (\E_{\infty,n}(\theta;\theta') \cdot \E_{\infty,n}(\theta';\theta))^{1/2} \|_\infty},
\end{equation}
which measures the residual in the reciprocity relation.
In~\eqref{eq:err-rp}
we denote by $\E_{\infty,n}(\cdot;\theta')$ the far field induced by the
$H$-polarized incident wave with direction
$\d = -(\sin \theta',0,\cos \theta')^T$.
The norms in~\eqref{eq:err-rp} are over $(\theta,\theta') \in
[0,2 \pi] \times [0,2 \pi]$ and are approximated discretely using more than
$1200 \times 1200$ points.
The tabulated results
demonstrate the superalgebraic convergence
established by
the numerical analysis in~\eqref{main_est}.
In particular,
high order accuracy is attained with only a small increase in the
discretization parameter $n$
(with $r$ in~\eqref{main_est} sufficiently large due to smoothness
of the test particles).
In Table~\ref{table:cpu}
we tabulate  the CPU time  and memory for computing
the non-convex penetrable Chebyshev particle results in Tables~\ref{table:hive1}--\ref{table:hive8}, demonstrating that the
spectral accuracy of our fully-discrete algorithm
facilitates simulation of the low to medium-frequency Maxwell model
in three dimensions
to high-accuracy even using laptops with multicore
processors.

For convenient visualization of the reciprocity relation, in
Figures~\ref{fig:far-hive8-polycarbonate}--\ref{fig:far-hive8-mishchenko2}
we plot the radar cross section $\sigma_{\mathrm{H}\mathrm{H}}$
  of the Chebyshev particle
in its original orientation, and rotated by an angle $\pi/2$ clockwise 
about the
$y$-axis.
{\revA{Following~\cite[Page~314]{rother}}}, the coincidence of the 
radar cross sections at $\theta = \pi/2$ demonstrates that the computed
solutions satisfy
the reciprocity relation (using
the symmetry of the Chebyshev particle about the $xy$-plane).

In Figure~\ref{fig:mish-ell2-size8}
we demonstrate the visual agreement of the radar cross section 
$\sigma_{\mathrm{H}\mathrm{H}}$
  computed using our code and
  using Mishchenko's null-field-method code
  (downloaded from~\cite{mish:code} and documented in~\cite{mish2000}) for
spheroids with aspect ratio $\rho = 2$ and electromagnetic size $s=8$.
  In Figure~\ref{fig:mish-ell5-size1} we demonstrate similar
  agreement for spheroids with aspect ratio $\rho = 5$
  and electromagnetic size $s=1$.
  The null field method is not able to compute the radar cross section
  for spheroids with such large aspect ratio $\rho = 5$
  and electromagnetic size $s=8$ because it
  exhibits numerical instability for large
  aspect ratio scatterers and large electromagnetic sizes~\cite{mish94}.
  The target accuracy of the computations using Mishchenko's code is about
  $10^{-3}$.
  %Radar cross section data from
 % Figures~\ref{fig:far-hive8-polycarbonate}--\ref{fig:mish-ell5-size1}
 % is available at~\cite{data}.

  \revB{%
    The induced electric field $\E^{\pm}$ is given by the
    surface integral representation~\cite[Equation~(6.1)]{ganesh2014all}
    (see also \cite[Equations~(2.8)--(2.9)]{dielectric2})
    and computed in a similar way to the far-field.
 In Figure~\ref{fig:ext-hive8-mishchenko2} we visualize
  the total field
  $|\mathop{\mathrm{Re}} \E_{\tot,n}(\x)|$
  and the relative error in the induced electric field
  for Chebyshev particles with electromagnetic size $s=1$ and $s=8$
  illuminated by an H-polarized incident plane wave.
  Because the true induced field is not known for this geometry, the
  relative error at $\x$ is approximated by
  \begin{displaymath}
    \mathrm{ERR}_{\pm,n}(\x)= 
    \frac{| \E^{\pm}_n(\x) - \E^{\pm}_{n+10}(\x) | }{|\E^{\pm}_{n+10}(\x)|}
  \end{displaymath}
  where $\E^{\pm}_n$ denotes the computed induced electric field
  in the exterior and interior respectively,
  and we use the
  solution computed with increased parameter $n+10$ as the reference
  solution.
  As $\x$ approaches the boundary $\surface$,  the accuracy is compromised due to the near-singularity in the kernel of the near-field  potentials  associated with surface currents. 
 To ensure high-order accuracy in the numerical evaluation of near-fields very close to the boundary, it is necessary to develop specialized techniques for accurately 
 evaluating these near-singular surface potential integrals. 
 Techniques such as those described in~\cite{helsing, near-sing-pap} can be used in conjunction with our high-order accurate surface currents. 
 However, we have not yet implemented these sophisticated techniques for near-field evaluations. Using our own approach  with approximations induced by the operator
 $ \underline{{\cal L}}_n$ with high-order convergence property~\eqref{estGan2},  as observed in Figure~\ref{fig:ext-hive8-mishchenko2},
 we obtain about five-digits accuracy even very close to the boundary. 
   }

  \revB{We close the numerical experiments with results 
    for plasmonic and  eddy current (very low frequency)
    material parameters. We recall that our continuous and discrete model analyses 
    are not applicable for plasmonic material parameters.    In Tables~\ref{table:eddy0p00001}--\ref{table:eddy0p0001}
    we explore the eddy current regime
    considered in~\cite{helsing:eddy}
    by tabulating the relative
    error~\eqref{eq:err-mie}
    for unit spheres with refractive indices having modulus
    between $10^2$ and $10^4$, and argument $\pi/4$.
    For these results we use our own Mie code.
%    because Wiscombe's
%    code~\cite{wiscombe:mie}.
    In Figure~\ref{fig:cond-eddy} we plot the 1-norm condition number
    of the matrix representing $I + \widetilde{\Mm}$
    for a unit sphere scatterer with material properties described by
    \begin{equation}
      \label{eq:x}
      x = \frac{1+\widehat{\epsilon}}{1-\widehat{\epsilon}},
    \end{equation}
    where $\widehat{\epsilon} = \epsilon_- / \epsilon_+$ is the
    relative permittivity (which, in our examples, is equal to the square of the refractive index $\nu$). 
    The
    condition number is computed for 
    refractive indices on the path
    shown in Figure~\ref{fig:path} using the same methodology used for
    Figure~\ref{fig:cond}.
    The same path is 
    used in~\cite[Figure~10]{helsing}, and
    the eddy current region is characterised by $x$ near 1,
    whilst the plasmonic regime is characterised by
    $x \in (-1,1)$.
    Finally, in Figure~\ref{fig:ext-hive2-plasmonic}
    we visualize the total field 
    $|\mathop{\mathrm{Re}} \E_{\tot,n}(\x)|$
    for a Chebyshev particle with electromagnetic size $s=2$
    illuminated by an H-polarized incident plane wave.
    The refractive index is
    $\nu = 1.088025734989757i$, so that the relative permittivity
    is negative, leading to plasmonic surface waves.
  }
   
% data from
  % /Users/stuart/Stuart/Fortran/2009/DIELECTRIC/EXPERIMENTS/TABLES/JOBS505  
  \begin{mietable}[\revisedC]{$10^{-6}\lambda$}{table:sph0p000001}
1  &  1.00e+00  &  9.44e-01  &  7.06e-01 \\
2  &  4.21e-10  &  4.27e-10  &  4.44e-10 \\
3  &  4.13e-10  &  4.88e-10  &  6.94e-10 \\
\end{mietable}

% data from
% /Users/stuart/Stuart/Fortran/2009/DIELECTRIC/EXPERIMENTS/TABLES/JOBS505
  \begin{mietable}[\revisedC]{$10^{-3}\lambda$}{table:sph0p001}
1  &  1.00e+00  &  9.44e-01  &  7.06e-01 \\
2  &  6.92e-07  &  6.64e-07  &  5.56e-07 \\
3  &  1.01e-12  &  9.36e-13  &  8.26e-13 \\
\end{mietable}

% data from
% /Users/stuart/Stuart/Fortran/2009/DIELECTRIC/EXPERIMENTS/TABLES/JOBS101
\begin{mietable}{$\lambda$}{table:sph1}
5  &  8.04e-03  &  7.35e-03  &  6.72e-03 \\
10  &  1.25e-09  &  1.17e-09  &  1.26e-09 \\
15  &  1.03e-09  &  1.02e-09  &  1.61e-09 \\
\end{mietable}

% data from
% /Users/stuart/Stuart/Fortran/2009/DIELECTRIC/EXPERIMENTS/TABLES/JOBS102
% /Users/stuart/Stuart/Fortran/2009/DIELECTRIC/EXPERIMENTS/TABLES/JOBS121
\begin{mietable}{$8 \lambda$}{table:sph8}
30  &  1.46e-01  &  1.27e-01  &  5.79e-04 \\
35  &  1.91e-03  &  1.59e-04  &  3.15e-07 \\
40  &  1.98e-08  &  2.18e-10  &  5.65e-11 \\
\end{mietable}

% data from
% /Users/stuart/Stuart/Fortran/2009/DIELECTRIC/EXPERIMENTS/TABLES/JOBS112
% data from
% /Users/stuart/Stuart/Fortran/2009/DIELECTRIC/EXPERIMENTS/TABLES/JOBS112
% data from
% /Users/stuart/Stuart/Fortran/2009/DIELECTRIC/EXPERIMENTS/TABLES/JOBS112
% data from
% /Users/stuart/Stuart/Fortran/2009/DIELECTRIC/EXPERIMENTS/TABLES/JOBS112
\begin{rptable}{spheroid}{$\lambda$}{table:ell:1}
  \multicolumn{2}{l}{Aspect ratio 2}\\
  \cline{1-3}
5  &  1.61e-03  &  1.44e-03  &  1.52e-03 \\
10  &  3.76e-07  &  3.37e-07  &  6.31e-07 \\
15  &  1.39e-09  &  1.30e-09  &  1.45e-09 \\
  \multicolumn{2}{l}{Aspect ratio 3}\\
  \cline{1-3}
5  &  1.61e-03  &  1.27e-03  &  1.74e-03 \\
15  &  2.84e-07  &  2.87e-07  &  3.66e-07 \\
25  &  1.52e-10  &  1.54e-10  &  1.97e-10 \\
  \multicolumn{2}{l}{Aspect ratio 4}\\
  \cline{1-3}
5  &  2.00e-03  &  1.64e-03  &  2.72e-03 \\
15  &  2.95e-06  &  3.14e-06  &  4.81e-06 \\
25  &  9.49e-09  &  1.01e-08  &  1.55e-08 \\
  \multicolumn{2}{l}{Aspect ratio 5}\\
  \cline{1-3}
10  &  9.93e-05  &  1.33e-04  &  5.27e-04 \\
20  &  5.23e-07  &  7.24e-07  &  3.32e-06 \\
30  &  4.16e-09  &  6.03e-09  &  3.21e-08 \\
\end{rptable}

\begin{rptable}{spheroid}{$8 \lambda$}{table:ell:8}
  \multicolumn{2}{l}{Aspect ratio 2}\\
  \cline{1-3}
20  &  4.62e-01  &  3.77e-01  &  7.99e-01 \\
30  &  6.33e-02  &  1.76e-02  &  8.53e-04 \\
40  &  9.33e-05  &  4.36e-07  &  8.48e-12 \\
\multicolumn{2}{l}{Aspect ratio 3}\\
  \cline{1-3}
20  &  2.80e-01  &  3.93e-01  &  5.73e-01 \\
30  &  4.72e-02  &  2.64e-02  &  1.23e-03 \\
40  &  9.37e-05  &  1.76e-06  &  2.71e-11 \\
\multicolumn{2}{l}{Aspect ratio 4}\\
  \cline{1-3}
20  &  3.82e-01  &  2.47e-01  &  4.46e-01 \\
30  &  7.78e-02  &  2.88e-02  &  1.00e-03 \\
40  &  1.36e-04  &  3.52e-06  &  1.02e-10 \\
\multicolumn{2}{l}{Aspect ratio 5}\\
  \cline{1-3}
20  &  2.11e-01  &  1.95e-01  &  4.11e-01 \\
30  &  4.23e-02  &  3.07e-02  &  6.80e-04 \\
40  &  1.45e-04  &  1.91e-06  &  2.05e-09 \\
\end{rptable}

% data from
% /Users/stuart/Stuart/Fortran/2009/DIELECTRIC/EXPERIMENTS/TABLES/JOBS105
% /Users/stuart/Stuart/Fortran/2009/DIELECTRIC/EXPERIMENTS/TABLES/JOBS119
\begin{rptable}{Chebyshev particle}{$\lambda$}{table:hive1}
20  &  9.36e-03  &  9.26e-03  &  1.25e-02 \\
30  &  5.21e-04  &  6.04e-04  &  1.23e-03 \\
40  &  1.03e-04  &  9.92e-05  &  1.24e-04 \\
\end{rptable}

% data from
% /Users/stuart/Stuart/Fortran/2009/DIELECTRIC/EXPERIMENTS/TABLES/JOBS108
\begin{rptable}{Chebyshev particle}{$4 \lambda$}{table:hive4}
40  &  1.64e-03  &  8.77e-04  &  2.00e-04 \\
50  &  7.49e-05  &  5.15e-05  &  2.48e-05 \\
60  &  1.02e-05  &  6.00e-06  &  4.01e-06 \\
\end{rptable}

% data from
% /Users/stuart/Stuart/Fortran/2009/DIELECTRIC/EXPERIMENTS/TABLES/JOBS111
\begin{rptable}{Chebyshev particle}{$8 \lambda$}{table:hive8}
60  &  7.89e-03  &  1.19e-03  &  1.26e-02 \\
70  &  4.39e-04  &  8.74e-05  &  4.96e-04 \\
80  &  2.69e-05  &  5.16e-06  &  2.09e-05 \\
\end{rptable}

% data from
% D3_JOBS150/jobs150_hive1_n020_exterior_size_1_reciprocity_polycarbonate_lu_combined_scaled_nodes1_cores36_270GB
% D3_JOBS150/jobs150_hive1_n040_exterior_size_1_reciprocity_polycarbonate_lu_combined_scaled_nodes1_cores36_270GB
% D3_JOBS151/jobs151_hive1_n060_exterior_size_4_reciprocity_polycarbonate_lu_combined_scaled_nodes1_cores36_270GB
%D3_JOBS152/jobs152_hive1_n080_exterior_size_8_reciprocity_polycarbonate_lu_combined_scaled_nodes1_cores36_270GB
\begin{table}
  \revisedus
  \centering
  \parbox{8cm}{%
  \begin{tabular}{crr}
    \hline
    $n$ & memory & CPU time\\
    \hline
    20 & 107\,MB & 34.7\,s \\
    40 & 1.52\,GB & 6.8\,min \\
    60 & 7.43\,GB & 40.4\,min\\
    80 & 23.09\,GB & 3.0\,h\\ 
    \hline
  \end{tabular}}
  \caption{\label{table:cpu}
    Memory required to store the discretization matrix and CPU wall time
    for computing the far-field for 1202 distinct incident plane waves
    for the non-convex Chebyshev particles (with high-order accuracy indicated in Tables~\ref{table:hive1}--\ref{table:hive8}).
  CPU times are in parallel on a single compute node with a 2.8\,GHz Intel Xeon  8562Y 32-cores processor.}
\end{table}

\farfieldfig{Chebyshev particle}{\poly}{8}{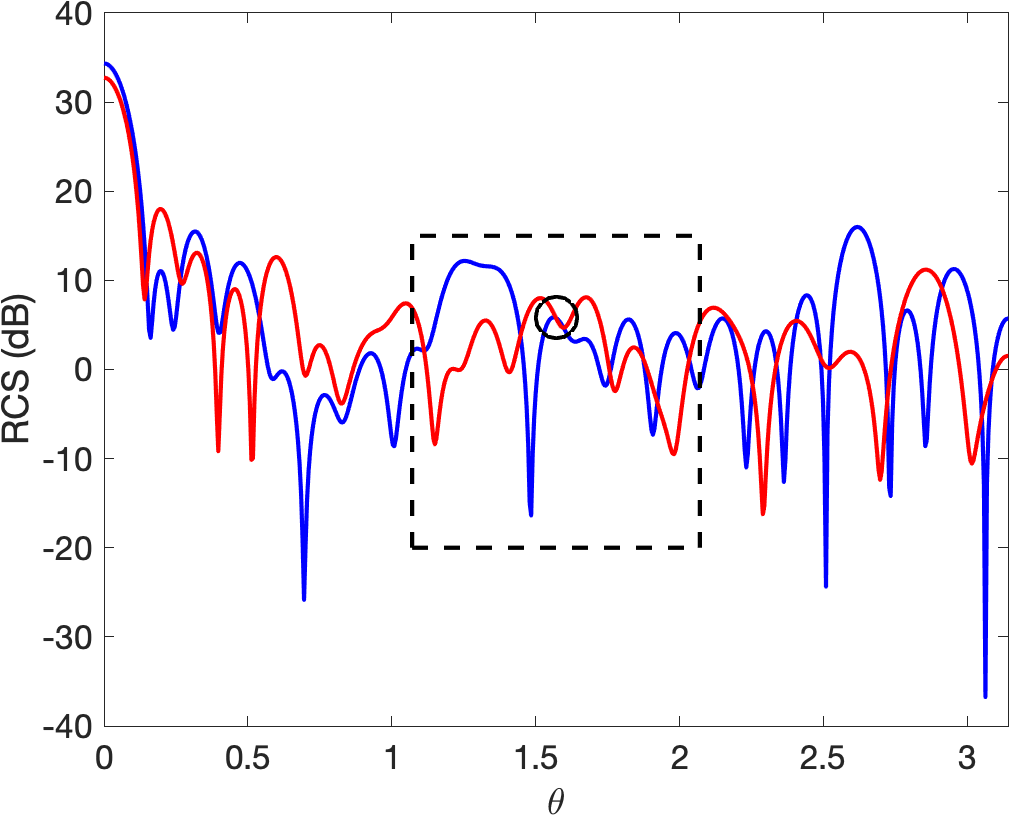}{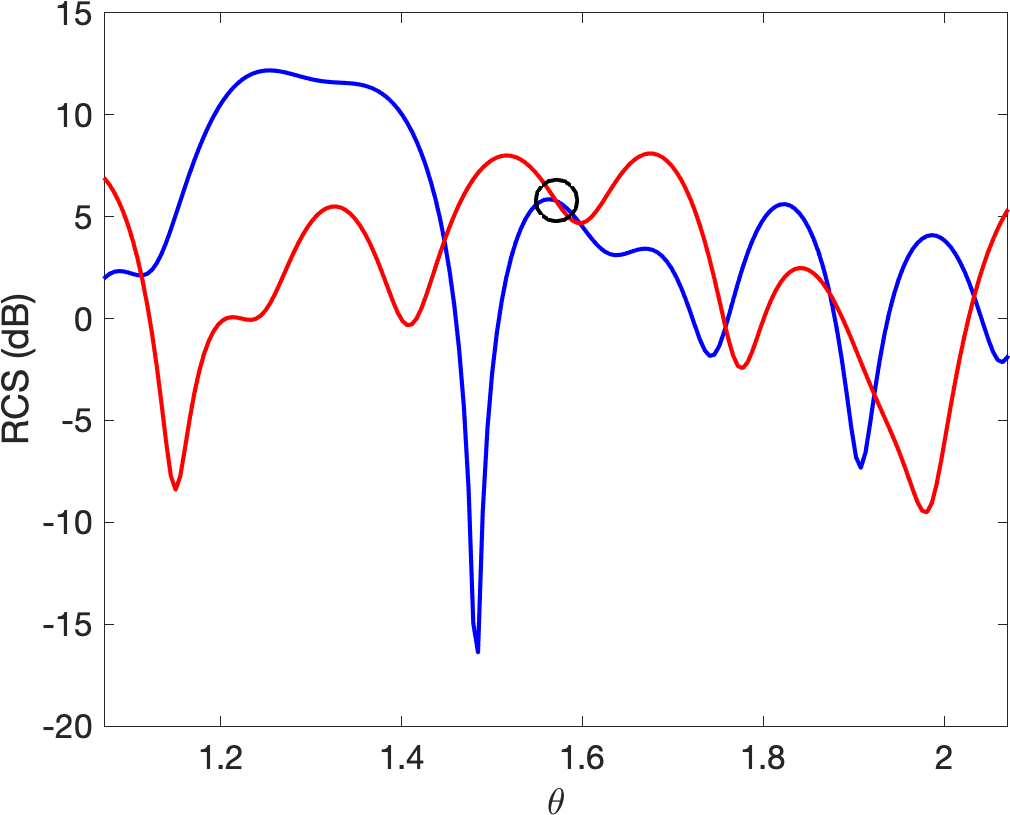}{fig:far-hive8-polycarbonate}{80}
\farfieldfig{Chebyshev particle}{\mishi}{8}{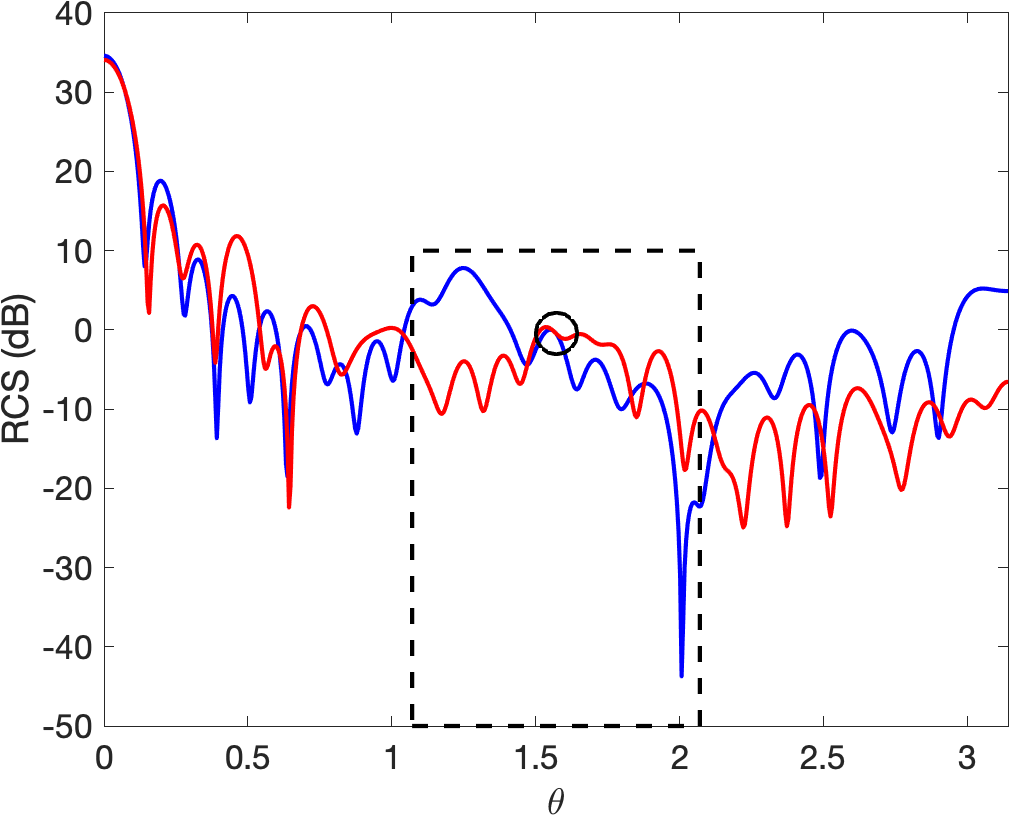}{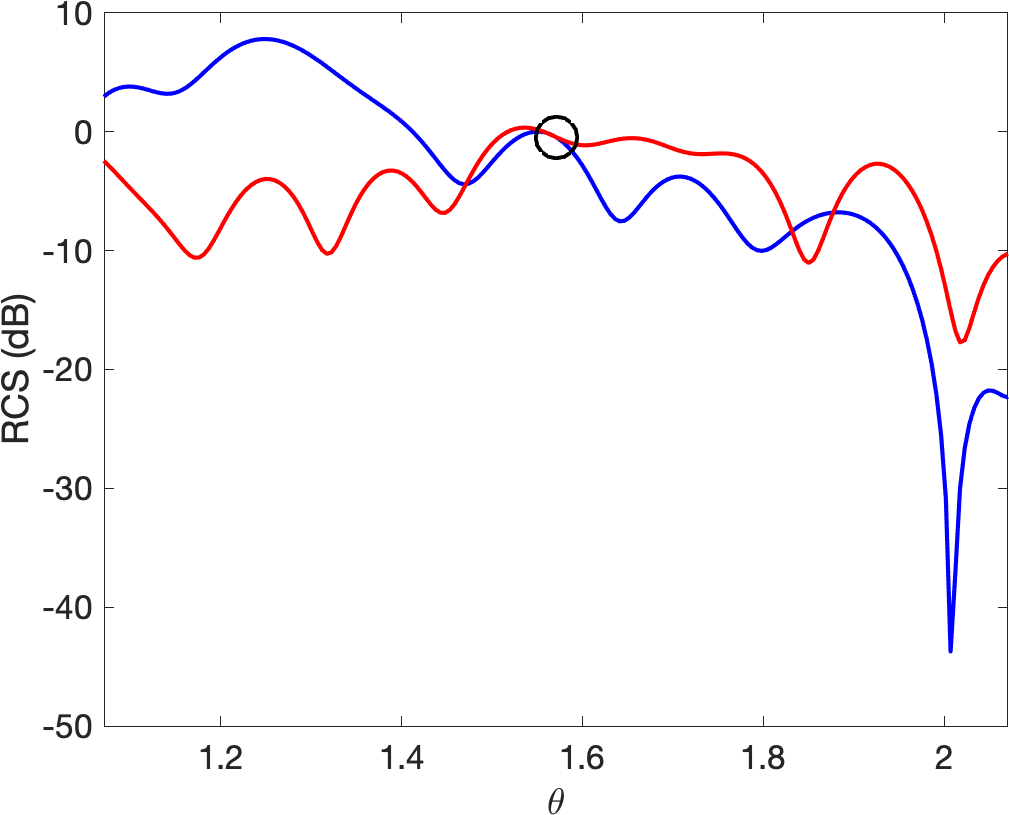}{fig:far-hive8-mishchenko1}{80}
\farfieldfig{Chebyshev particle}{\mishii}{8}{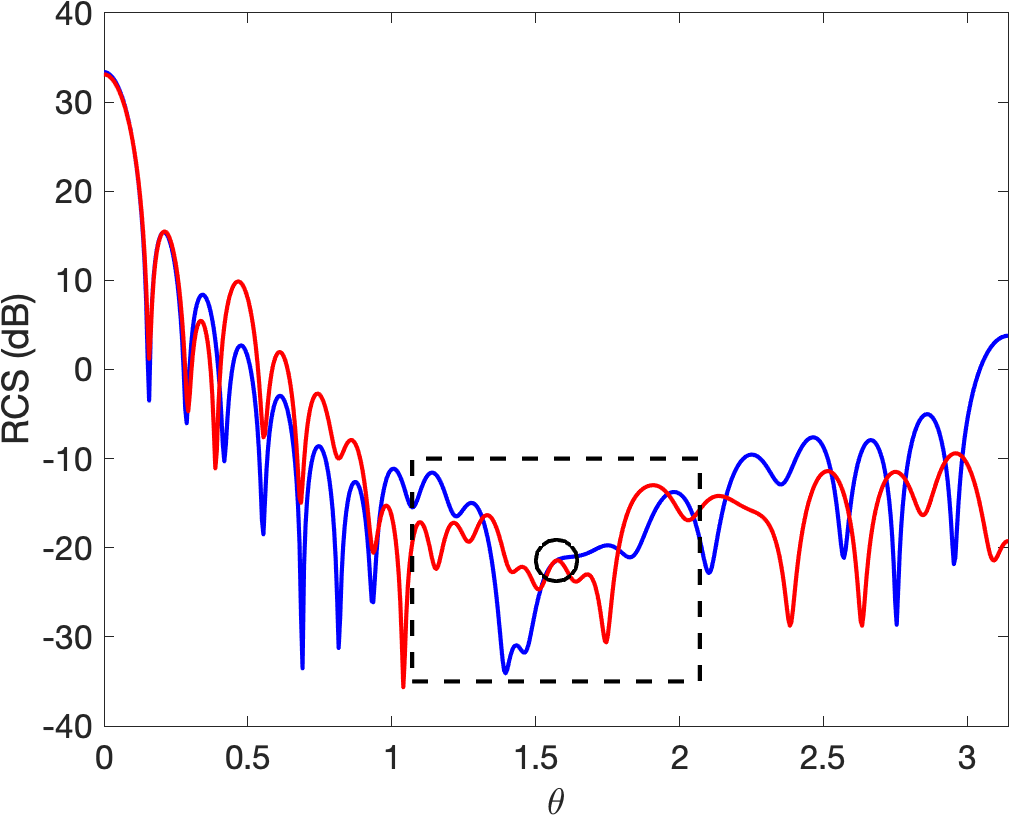}{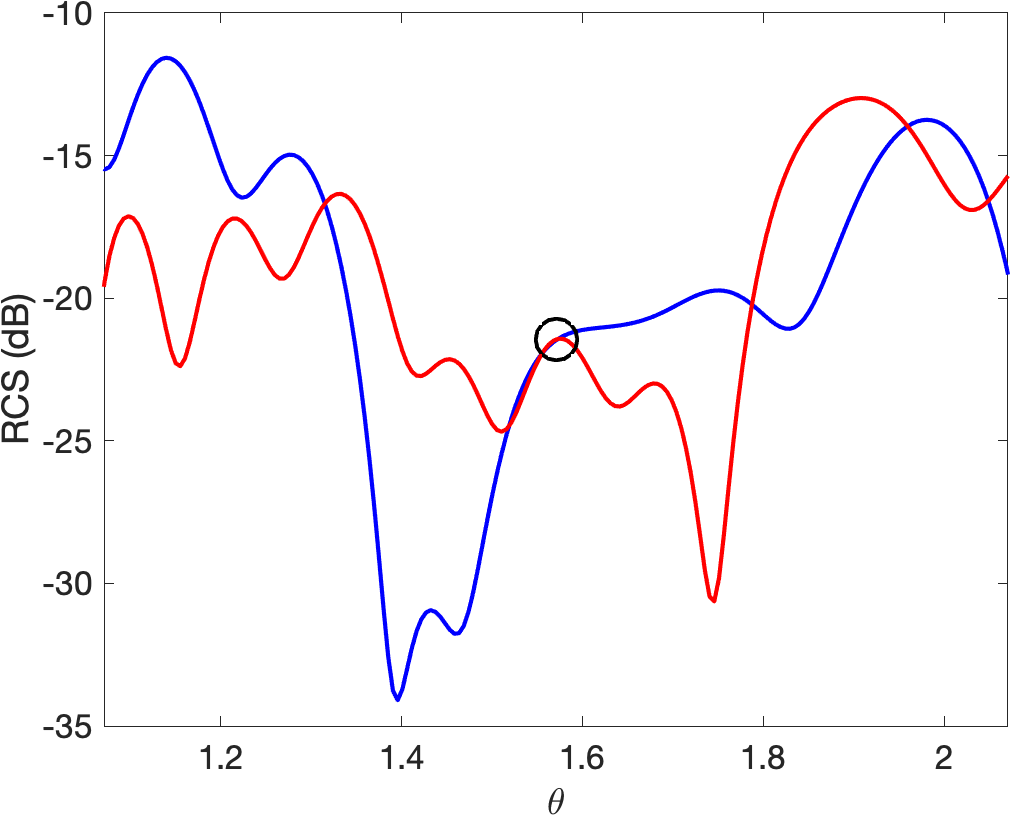}{fig:far-hive8-mishchenko2}{80}

% mish_figure(2,8,'polycarbonate')
% mish_figure(2,8,'mishchenko2')
\mishfig{2}{\poly}{\mishii}{8}{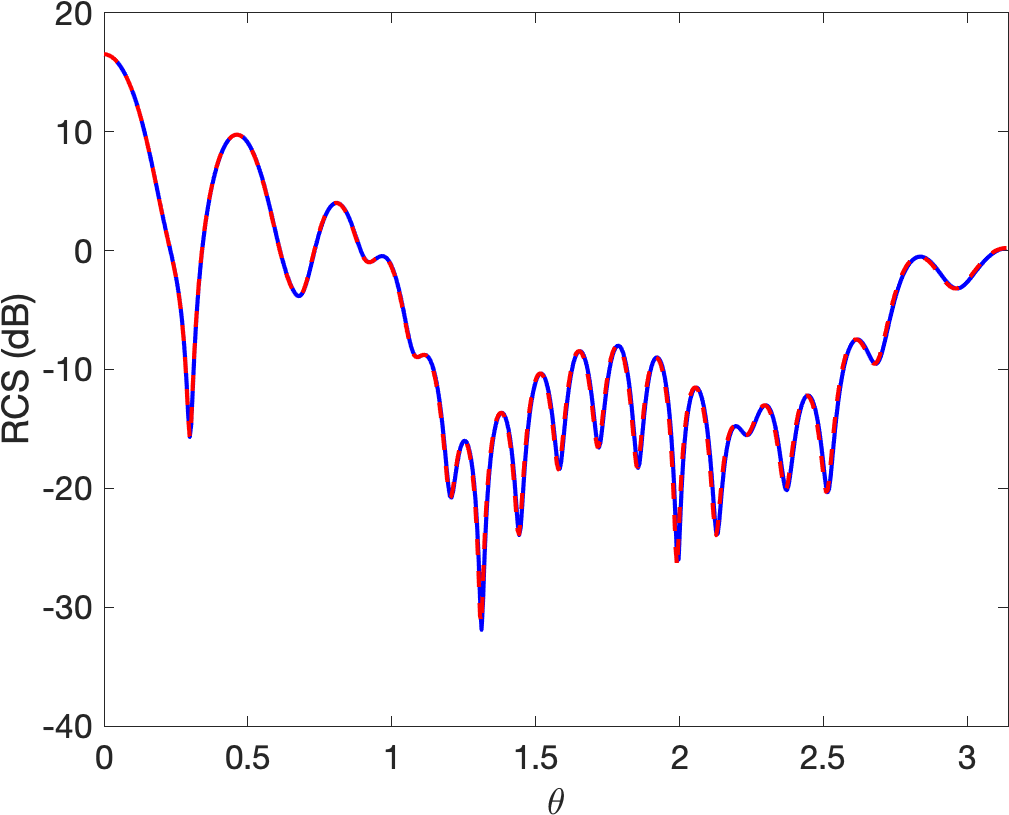}{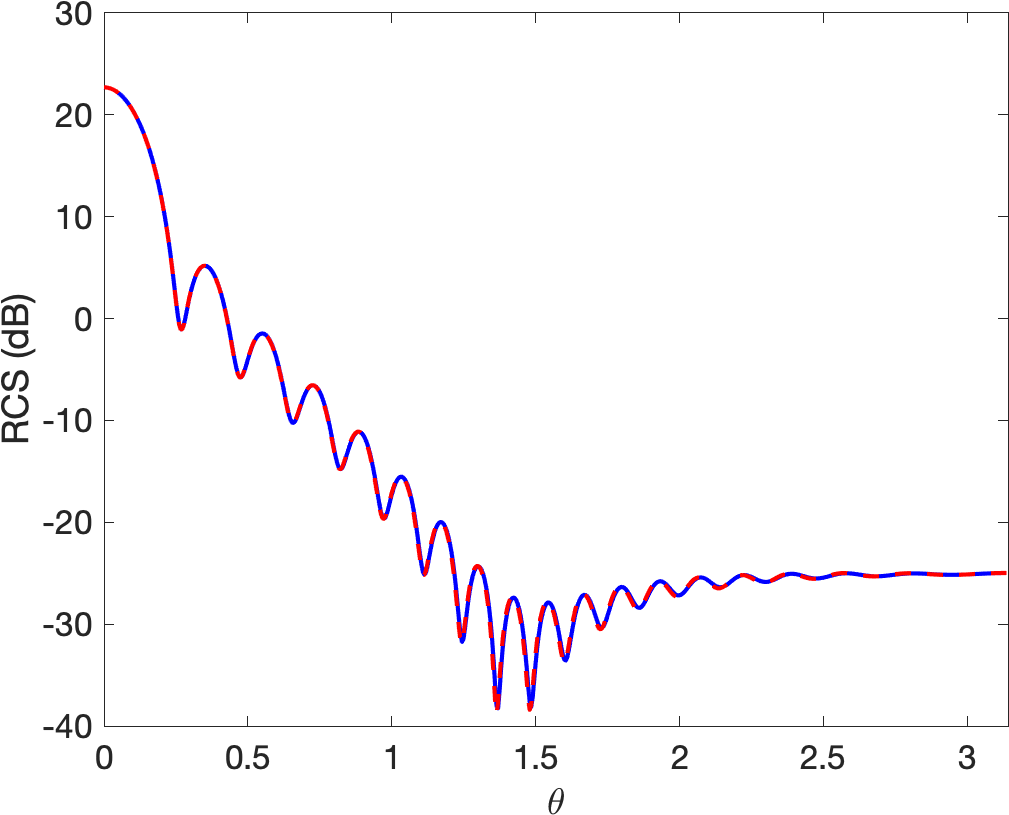}{fig:mish-ell2-size8}{50}

% mish_figure(5,1,'polycarbonate')
% mish_figure(5,1,'mishchenko2')
\mishfig{5}{\poly}{\mishii}{1}{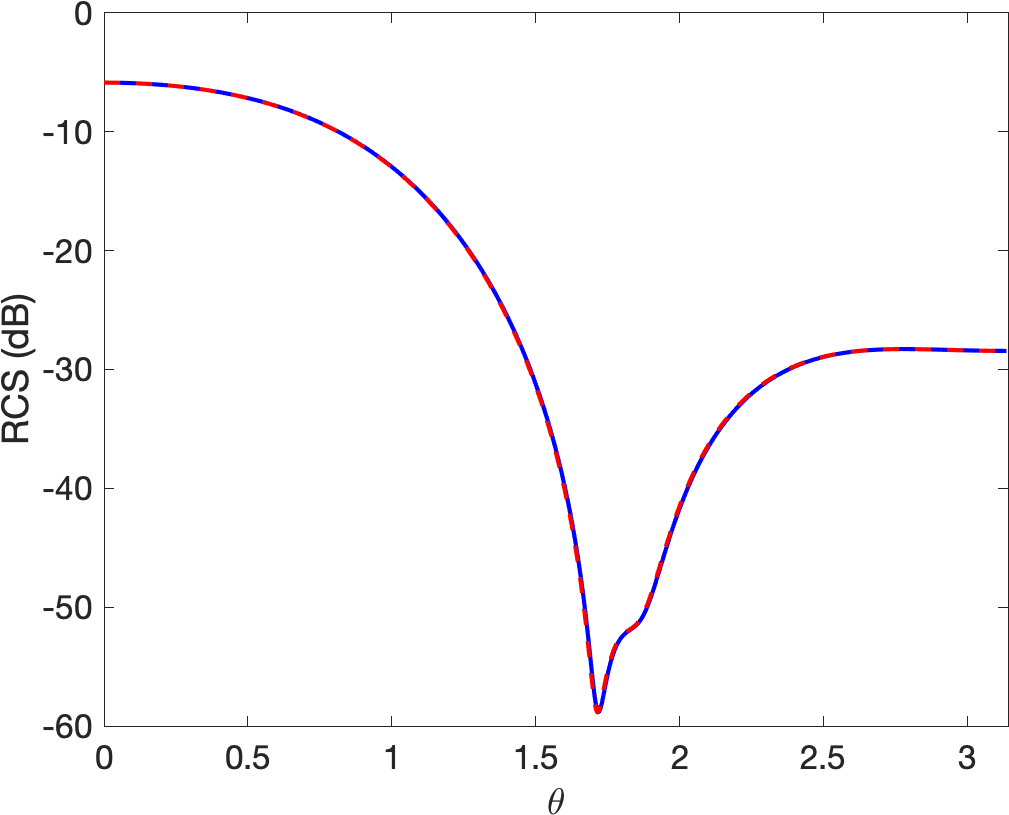}{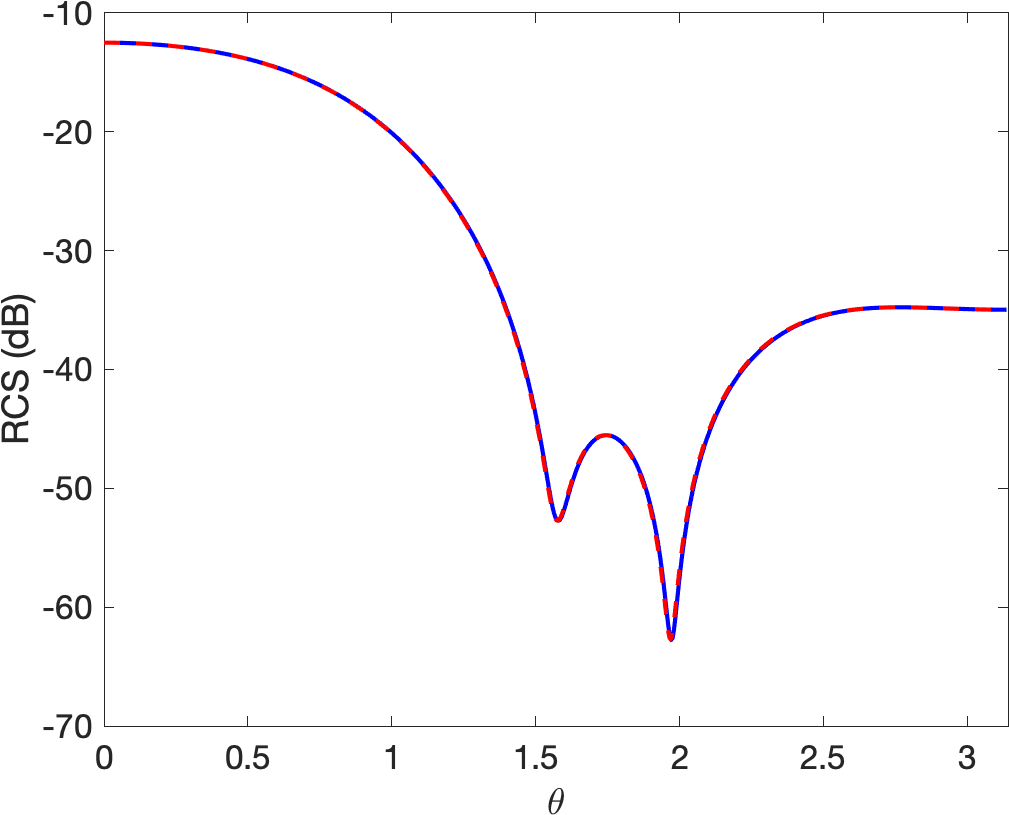}{fig:mish-ell5-size1}{30}

\renewcommand{\errorsuffix}{_error}
\extfieldfig{Chebyshev particle}{\mishii}{4}{8}{figure_field_hive4_mishchenko2_n80}{figure_field_hive8_mishchenko2_n110}{fig:ext-hive8-mishchenko2}{80}{110}

% From figure_cond_plasmonic.m
  \begin{figure}
  \centering
    \revisedB
    \fbox{%
  \includegraphics[width=10cm]{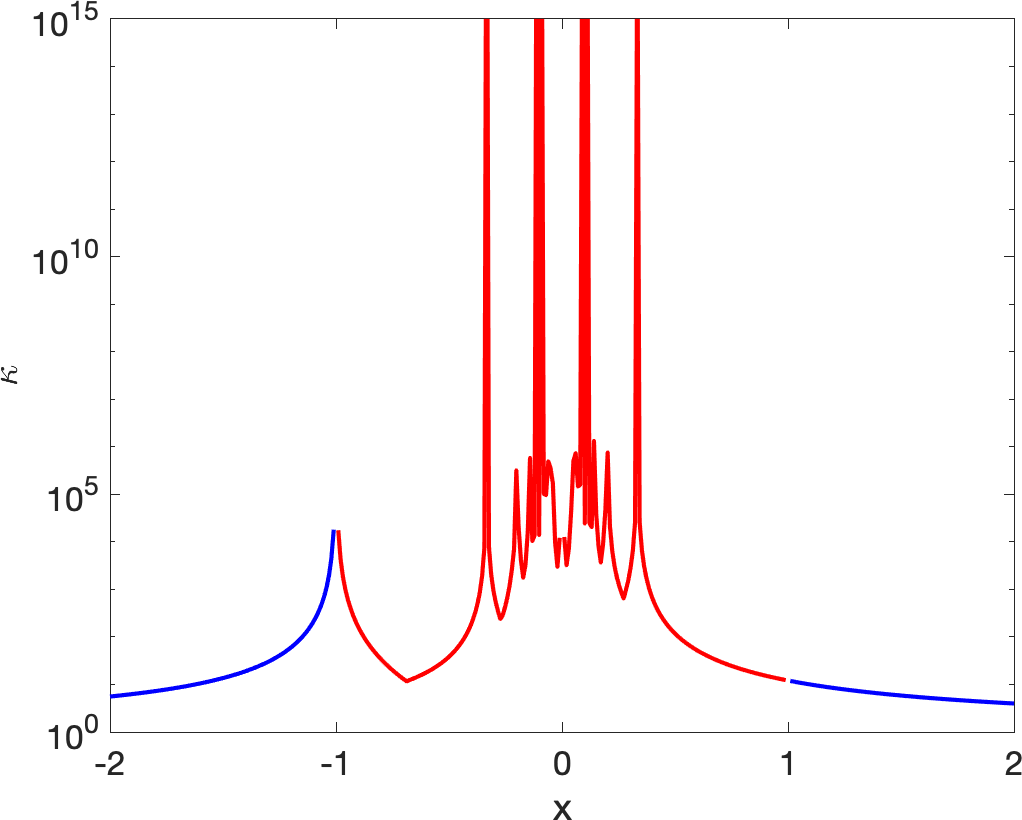}}
  \caption{\label{fig:cond-eddy}
    Condition number $\kappa$ of the matrix in system~\eqref{eq:SIE}
    (with stabilization)
    for a unit sphere with
    size $10^{-6}$
    against $x=(1+\widehat{\epsilon})/(1-\widehat{\epsilon})$
    where $\widehat{\epsilon} = \epsilon_-/\epsilon_+$ is the
    relative permittivity of the scatterer.
    The path along which the condition number is computed
    is shown in Figure~\ref{fig:path}.
    The stability analysis in this article applies to the 
    dielectric region $x \in (-\infty,-1) \cup (1,\infty)$
    (blue line).
    The plasmonic region is $x \in (-1,1)$ (red line).
    }
\end{figure}

  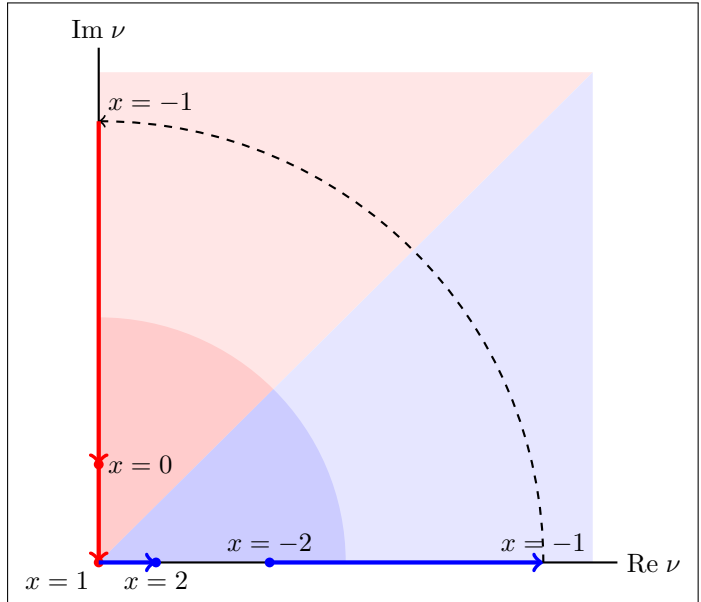
\begin{figure}
    \centering
    \revisedB
    \fbox{%
      \color{black}
\begin{tikzpicture}[scale=0.65]
  \fill[red!10!white] (0,0) -- (10,10) -- (0,10) -- cycle;
  \fill[blue!10!white] (0,0) -- (10,10) -- (10,0) -- cycle;
  \begin{scope}
  \clip (5,0) arc [start angle=0, end angle=90, radius=5] -- (0,0) -- cycle;
  \fill[red!20!white] (0,0) -- (10,10) -- (0,10) -- cycle;
  \fill[blue!20!white] (0,0) -- (10,10) -- (10,0) -- cycle;
  \end{scope}
  \draw[thick] (0,0) -- (10.5,0);
  \draw[thick] (0,0) -- (0,10.5);
  \draw (0,10.5) node[anchor=south] {$\mathrm{Im} \; \nu$};
  \draw (10.5,0) node[anchor=west] {$\mathrm{Re} \; \nu$};
  % eps = (x-1)/(x+1)
  % x=-2 -- x=-1
  \draw[blue,ultra thick,->] (3.46,0) -- (9,0);
  \fill[blue] (3.46,0) circle[radius=0.1];
  \draw (3.46,0) node[anchor=south] {$x=-2$};
  \draw (9,0) node[anchor=south] {$x=-1$};
  \draw[thick,dashed,->] (9,0) arc[start angle=0,end angle=90,radius=9];
  % x=-1 -- x=-0
  \draw[red,ultra thick,->] (0,9) -- (0,2);
  \fill[red] (0,2) circle[radius=0.1];
  \draw (0,9) node[anchor=south west] {$x=-1$};
  \draw (0,2) node[anchor=west] {$x=0$};
  % x=0 -- x=1
  \draw[red,ultra thick,->] (0,2) -- (0,0);
  \fill[red] (0,0) circle[radius=0.1];
  \draw (0,0) node[anchor=north east] {$x=1$};
  % x=1 -- x=2
  \draw[blue,ultra thick,->] (0,0) -- (1.16,0);
  \fill[blue] (1.16,0) circle[radius=0.1];
  \draw (1.16,0) node[anchor=north] {$x=2$};
    \end{tikzpicture}}
    \caption{\label{fig:path}
      Schematic showing the
      path along which the condition number is plotted in
      Figure~\ref{fig:cond-eddy}.
      The 
      dielectric and absorbing region is marked by blue shading,
      the plasmonic region by red shading, and the Eddy current region
      where $|\nu|$ is large by light shading.
    }
\end{figure}

% data from
  % /Users/stuart/Stuart/Fortran/2009/DIELECTRIC/EXPERIMENTS/TABLES/JOBS504  
  \begin{eddytable}[\revisedB]{$10^{-5}\lambda$}{table:eddy0p00001}
2  &  4.09e-07  &  3.97e-05  &  8.62e-05 \\
3  &  1.50e-07  &  5.22e-06  &  5.79e-06 \\
4  &  3.26e-09  &  5.58e-07  &  2.32e-06 \\
\end{eddytable}

% data from
  % /Users/stuart/Stuart/Fortran/2009/DIELECTRIC/EXPERIMENTS/TABLES/JOBS504  
  \begin{eddytable}[\revisedB]{$10^{-4}\lambda$}{table:eddy0p0001}
2  &  3.95e-05  &  2.64e-03  &  2.65e+00 \\
3  &  1.50e-08  &  5.91e-08  &  3.76e-02 \\
4  &  4.48e-10  &  2.86e-08  &  2.41e-04 \\
\end{eddytable}

\begin{figure}
    \centering
    \revisedB
    \fbox{%
        \includegraphics[width=10cm]{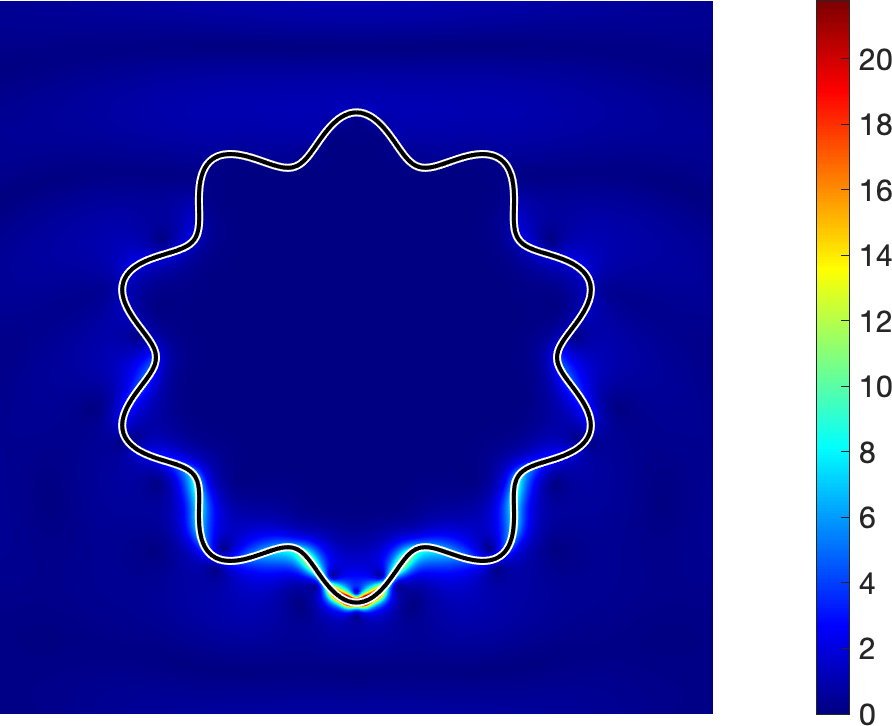}}
    \caption{\label{fig:ext-hive2-plasmonic}
      Visualization of the interior and exterior total electric fields
      $| \mathop{\mathrm{Re}} \E_{\tot,n}|$ 
      induced by a H-polarized plane wave
  striking a {\it plasmonic} Chebyshev particle with  refractive index $\nu = 1.088025734989757i$
  and electromagnetic size $s = 2$ 
  computed with $n=100$.
    }
\end{figure}

\clearpage
\appendix
\section{Singularity of standard coupling approach} \label{Appendix1}
%If $\Mm$ and $\Jm$ are two square matrices such that the nullspace of the block 
%matrix $\left( \begin{array}{c} I+K \\ L \end{array} \right)$ is reduced to zero, 
%is it true in general that there is a scalar $\alpha $ such that 
%$I +K + \alpha L $ is invertible? The answer turns out to be no, as soon as 
%$\dim H \geq 2$. In infinite dimension, the answer is still no, even if we
%assume that both $K$ and $L$ are compact.
\begin{prop}
Let $H$ be a Hilbert space. Assume that either
$H$ is  finite dimensional with $\dim H \geq 2$ or $H$ is separable.
 There exist two compact linear operators $\Mm, \Jm: H \to H$
 such that 
$N(\Jm) \cap N(\Ima +\Mm) = \{ 0 \}$ and, for every $\xi \in \mathbb{C}$,
$\Ima +\Mm + \xi \Jm $ is singular.
\end{prop}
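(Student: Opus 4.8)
The statement is purely abstract, so the plan is simply to exhibit an explicit pair of finite-rank (hence compact) operators that do the job; no structural feature of $\Mm$, $\Jm$ beyond compactness is required. Since $\dim H \geq 2$ in both cases covered by the hypotheses, I would fix two orthonormal vectors $u, v \in H$ and split $H = \mathrm{span}\{u, v\} \oplus W$ with $W := \{u, v\}^\perp$.

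First I would define the two rank-one operators
\[
\Mm x = -\langle x, u\rangle\, u, \qquad \Jm x = \langle x, u\rangle\, v, \qquad x \in H.
\]
Both are finite rank, hence compact, whether $H$ is finite dimensional or infinite-dimensional separable. With this choice $\Ima + \Mm$ is the orthogonal projection onto $u^\perp$, so $N(\Ima + \Mm) = \mathrm{span}\{u\}$, while $N(\Jm) = u^\perp$; consequently $N(\Jm) \cap N(\Ima + \Mm) = \{0\}$, which gives the first required property at once.

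Next I would check singularity of $\Ima + \Mm + \xi\Jm$ for every $\xi \in \mathbb{C}$ by producing an explicit nonzero kernel element. The right candidate is the \emph{$\xi$-dependent} vector $w_\xi := u - \xi v$, which is nonzero because $u, v$ are linearly independent; a one-line computation using $\langle v, u\rangle = 0$ gives $\Ima w_\xi = u - \xi v$, $\Mm w_\xi = -u$, $\xi\Jm w_\xi = \xi v$, whose sum vanishes. Hence $\Ima + \Mm + \xi\Jm$ has nontrivial kernel and is therefore not invertible — singular in the finite-dimensional case, non-injective (so non-invertible) in the separable case. Equivalently, in the basis $\{u, v\}$ the restriction of $\Ima + \Mm + \xi\Jm$ to the invariant plane $\mathrm{span}\{u,v\}$ has matrix $\begin{pmatrix} 0 & 0 \\ \xi & 1\end{pmatrix}$, of determinant identically $0$ in $\xi$, while on $W$ the operator is the identity and plays no role.

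The only thing that has to be \emph{discovered} rather than merely verified is the idea that the kernel of the perturbed operator should be allowed to move with $\xi$: one engineers $\Jm$ so that it maps the fixed null direction $u$ of $\Ima + \Mm$ into the complementary direction $v$, so that adding $\xi\Jm$ rotates the kernel line from $\mathrm{span}\{u\}$ to $\mathrm{span}\{u - \xi v\}$ instead of removing it. Once this is seen there is no real obstacle; all the verification steps above are routine, and the same construction serves all the admissible spaces $H$ simultaneously.
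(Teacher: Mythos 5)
Your proof is correct and uses essentially the same construction as the paper: with $u=e_2$, $v=e_1$ your rank-one operators $\Mm x=-\langle x,u\rangle u$ and $\Jm x=\langle x,u\rangle v$ coincide with the paper's matrices/basis definitions, and your kernel vector $u-\xi v$ is (up to sign) the paper's $\xi e_1-e_2$. The only difference is presentational: you treat the finite-dimensional and separable cases uniformly, whereas the paper splits them into a $2\times 2$ matrix example and a basis-defined operator on a separable space.
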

\begin{proof}
In dimension 2 we set
\begin{equation}
\Ima+ \Mm= \left(
\begin{array}{cc}
1 & 0 \\
0 & 0
\end{array}
\right), \quad 
\Jm= \left(
\begin{array}{cc}
0 & 1 \\
0 & 0
\end{array}
\right) \nonumber
\end{equation}
It is clear that $\Ima + \Mm  + \xi \Jm $ is singular, for any $\xi \in \mathbb{C}$.
Generalizing this example to higher dimensions is trivial using block matrices.

Let $H$ be a separable Hilbert space and $\{e_i : i \in \mathbb{N} , i\geq 1\}$
a Hilbert basis of $H$.
Define $\Mm$ by 
\begin{displaymath}
    \Mm e_i= \left\{ \begin{array}{lll}
      -e_i, & \qquad & \mbox{for $i=2$},\\
      0, && \mbox{for $i \neq 2$,}
      \end{array} \right.
\end{displaymath}
and $\Jm$ by
\begin{displaymath}
    \Jm e_i= \left\{ \begin{array}{lll}
      e_1, & \qquad & \mbox{for $i=2$},\\
      0, && \mbox{for $i \neq 2$.}
      \end{array} \right.
\end{displaymath}%
A simple calculation can show that $N(\Jm) \cap N(\Ima +\Mm) = \{ 0 \}$
and $(\Ima +\Mm + \xi \Jm) (\xi e_1 - e_2) =0$
for any $\xi \in \bbC$. 
\end{proof}

\begin{remark}
In dimension three or higher, or in a separable Hilbert space,
$\Ima + \Mm  + \xi \Jm $ may be singular for all $\xi \in \mathbb{C} $,
even if both $\Mm$ and $\Jm$ are symmetric,
compact, 
 and $N(\Jm) \cap N(\Ima +\Mm) = \{ 0 \}$.
In dimension 3, set
\begin{equation}
\Ima+ \Mm= \left(
\begin{array}{ccc}
0 & 0 & 1 \\
0 & 0  & 1  \\
1 & 1 & 0
\end{array}
\right), \quad 
\Jm= \left(
\begin{array}{ccc}
1 & 0 & 0 \\
0 & -1 & 0 \\
0 & 0 & 0
\end{array}
\right). \nonumber
\end{equation}
We note that $N(\Jm) \cap N(\Ima +\Mm) = \{ 0 \}$
and yet $(\Ima + \Mm  + \xi \Jm) (1,-1,-\xi) =0$, for all $\xi \in \mathbb{C}$.
This counterexample can be easily extended to any separable Hilbert space.
\end{remark}

\bmhead{Acknowledgments}
 The first two authors gratefully acknowledge the support of the
Australian Research Council (ARC) Discovery Project Grant (DP220102243).
MG was also supported by the Simons Foundation  through grant 518882.
SCH thanks the Isaac Newton Institute for Mathematical Sciences
for hospitality during the program Mathematical Theory and Applications
of Multiple Wave Scattering and acknowledges support of the EPSRC
(Grant Number EP/R014604/1) and the Simons Foundation.

\section*{Declarations}
{\bf Conflict of interest:} The authors declare no competing interests.

\bibliography{references}

\end{document}